\newtheorem{lemma}{Lemma}
\newtheorem{proposition}[lemma]{Proposition}
\newtheorem{corollary}[lemma]{Corollary}
\newtheorem{theorem}[lemma]{Theorem}
\newtheorem{conjecture}[lemma]{Conjecture}
\theoremstyle{definition}
\theoremstyle{remark}
\newtheorem*{remark}{Remark}
\newcommand{\F}{\mathbb{F}}
\newcommand{\R}{\mathbb{R}}
\newcommand{\Z}{\mathbb{Z}}
\newcommand{\cA}{\mathcal{A}}
\newcommand{\cB}{\mathcal{B}}
\newcommand{\cR}{\mathcal{R}}
\newcommand{\e}{\epsilon}
\newcommand{\coeffs}{\mathbbm{k}}
\newcommand{\Aug}{{\cA ug}_+}
\newcommand{\AugBC}{{\cA ug}_-}
\newcommand{\alg}{\cA}
\newcommand{\dga}{dga}
\newcommand{\dgas}{dgas}
\DeclareMathOperator{\Hom}{Hom}
\DeclareMathOperator{\Aut}{Aut}
\DeclareMathOperator{\Span}{Span}
\DeclareMathOperator{\Coeff}{Coeff}
\begin{document}

\title{The cardinality of the augmentation category of a Legendrian link}

\author{Lenhard Ng}
\address{Lenhard Ng \\ Department of Mathematics \\ Duke University}
\email{ng@math.duke.edu}

\author{Dan Rutherford}
\address{Dan Rutherford \\ Department of Mathematical Sciences \\ Ball State University}
\email{rutherford@bsu.edu}

\author{Vivek Shende}
\address{Vivek Shende \\ Department of Mathematics \\ UC Berkeley}
\email{vivek@math.berkeley.edu}

\author{Steven Sivek}
\address{Steven Sivek \\ Department of Mathematics \\ Princeton University}
\email{ssivek@math.princeton.edu}

\begin{abstract}
We introduce a notion of cardinality for the augmentation category associated to a Legendrian knot or link in standard contact $\mathbb{R}^3$. This `homotopy cardinality' is an invariant of the category and allows for a weighted count of augmentations, which we prove to be determined by the ruling polynomial of the link. We present an application to the augmentation category of doubly Lagrangian slice knots.
\end{abstract}

\maketitle

\section{Introduction}

To a Legendrian knot or link $\Lambda$ in the standard contact $\R^3$,
one can associate a semi-free noncommutative \dga{} $\cA(\Lambda)$, the
Chekanov--Eliashberg \dga{}, whose generators
are the Reeb chords of $\Lambda$, and whose differential counts certain
holomorphic disks in the symplectization \cite{Eli, Che, EGH}.  Legendrian isotopies
$\Lambda \sim \Lambda'$ induce homotopy equivalences
$\cA(\Lambda) \sim \cA(\Lambda')$.  In practice, information is often extracted from $\cA(\Lambda)$
by studying its \dga{} maps to a field, called augmentations.  For instance, one could just count them:
that is, fix a finite field $\F_q$
and count \dga{} morphisms $\cA(\Lambda) \to \F_q$.

A series of works \cite{CP, Fuc, FI, Sab, NS, HR} gives increasingly precise relationships between
this count and certain combinatorial objects called {\em graded normal rulings}.
A {\em normal ruling} of a Legendrian front is a decomposition into
boundaries of embedded disks, each containing one right cusp and one left cusp, and satisfying
certain additional requirements; see Figure \ref{fig:m821} for an example and any of the above references for a more precise definition. The data of a normal ruling is just the specification, at each crossing, of whether the two disk boundaries simply trace out the knot, or `switch' at the crossing.
A normal ruling is {\em graded}
if all switches occur between strands of equal Maslov potential. The conditions defining graded normal rulings are such that a generating family (or sheaf) for the Legendrian link determines such a ruling by applying Barannikov normal form to the filtered complexes over each $x$ coordinate \cite{Bar,CP}.

The switching data can be viewed as
prescribing a gluing of the disks into a surface, and the Euler characteristic of this surface is a
natural invariant of the ruling.  For a given Legendrian front, we write $\cR$ for the set of graded normal rulings,
and $\chi(R)$ for the Euler characteristic of the surface associated to a ruling $R \in \cR$; note that $\chi(R) = \# \mbox{right cusps} - \# \mbox{switches}$.  Following \cite{CP}, we can then
assemble these into the {\em ruling polynomial} $R_\Lambda(z) = \sum_{R\in\cR} z^{-\chi(R)}$.

To state the relationship between rulings and the count of augmentations, we define the {\em shifted Euler characteristic}
\[
\chi_*(\Lambda) = \sum_{i \ge 0} (-1)^i r_i + \sum_{i < 0} (-1)^{i+1} r_i
\]
as in \cite{NS},
where $r_i$ is the number of Reeb chords whose corresponding element in $\cA(\Lambda)$ has
degree $i$.  If $\Lambda$ has $\ell$ components, we now have the
following result of Henry and Rutherford \cite[Remark~3.3(ii)]{HR}:
\begin{equation} \label{eq:hr}
 q^{-\chi_*(\Lambda) / 2} (q-1)^{-\ell}  \cdot  \# \{ \cA(\Lambda) \to \F_q \} =
R_\Lambda(q^{1/2}-q^{-1/2}).
\end{equation}

There is a certain tension in the above equation.
The right hand side of \eqref{eq:hr} has been proven by Chekanov and
Pushkar \cite{CP} to be an invariant of Legendrian isotopy, by
checking Legendrian Reidemeister moves.
Moreover, a result of Rutherford \cite{Rut} asserts that in the $2$-periodic setting, the right hand side is in fact a topological invariant, a part of the HOMFLY-PT polynomial of the underlying topological knot.

But on the left hand side, since
only the homotopy class of the algebra $\cA(\Lambda)$ is an invariant of the Legendrian
isotopy class of $\Lambda$, the set of augmentations is {\em not} an invariant.  However,
there is now
a unital $A_\infty$ category $\Aug(\Lambda; \F_q)$,
the {\em augmentation category}, whose objects are augmentations, and
whose endomorphisms are a version of linearized contact cohomology \cite{NRSSZ}.  This category is a
Legendrian isotopy invariant of $\Lambda$, in the sense that Legendrian isotopies induce
$A_\infty$-equivalences of categories.  It was introduced in order to match
a certain category of sheaves studied in \cite{STZ}---the subcategory
of compactly supported,
microlocal rank one objects in the category
$Sh_{\Lambda}(\R^2; \F_q)$ of
sheaves on the front plane with microsupport on $\Lambda$.
This category of sheaves is in turn equivalent to the full subcategory
consisting of objects asymptotic to $\Lambda$ in
the derived infinitesimally wrapped Fukaya category \cite{NZ}.  These categories
are also known to be equivalent to a category of generating families \cite{She}.

Our goal here is to show that the left hand side of Equation~\eqref{eq:hr}
can be replaced by the cardinality, suitably interpreted, of the
augmentation category $\Aug(\Lambda; \F_q)$.

Already for usual categories, there are two ways to count objects.\footnote{More properly,
we are counting in the groupoid of isomorphisms in the category.  } The first is to
just count the number of isomorphism classes.  The second, usually more natural, is to compute the
sum over the isomorphism classes of
the inverse of the number of automorphisms of an object in the class.
The reason to prefer the latter to the former is familiar from the theory of finite group
actions on sets.  Given a set $S$ carrying the action
of a group $G$, one can form the quotient set $S/G$, whose elements are equivalence
classes, or one can form the groupoid quotient $[S/G]$: its objects
are the $G$-orbits, and their endomorphisms are given
by the stabilizers.  One has $\#(S / G) = \# S / \# G$ only when the
$G$ action is free, while by
the orbit-stabilizer theorem, one always has $\# [S/G] = \#S / \#G$.

However, computer experiments
have shown that
neither counting equivalence classes nor computing the groupoid cardinality
of
$H^0(\Aug(\Lambda; \F_q))$ will give the left hand side of
Equation~\eqref{eq:hr}, even up to multiplication by powers of $q^{1/2}$ and $q-1$.  For
the count of equivalence classes, the Legendrian $m(8_{21})$ knot studied in \cite{MS} (and pictured in
Figure \ref{fig:m821}) has $16$ augmentations over $\F_2$ which fall into $10$ equivalence classes, whereas the left side of Equation~\eqref{eq:hr} is $4\sqrt{2}$.
For the groupoid cardinality, consider the Legendrian $m(9_{45})$ knot discussed in \cite[Section~4.4.4]{NRSSZ}.  This has $5$ augmentations over $\F_2$, none of which are equivalent; three of them have $\lvert\Aut(\e)\rvert=1$ and two have $\lvert\Aut(\e)\rvert=2$, so the groupoid cardinality is $4$, whereas the left side of Equation~\eqref{eq:hr} is equal to $5/\sqrt{2}$.

\begin{figure}

\centerline{ \includegraphics[scale=.8]{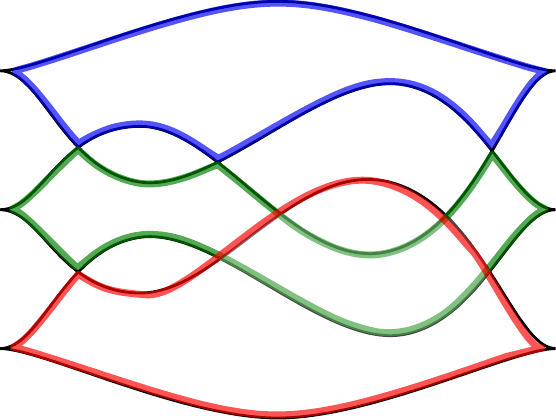}  }
\caption{  A graded normal ruling $R$ of a Legendrian $m(8_{21})$ knot with $\chi(R) = -1$.
}
\label{fig:m821}
\end{figure}

What does work is the homotopy cardinality \cite{BD}, also sometimes called the multiplicative
Euler characteristic.  This is a number attached to spaces with finitely many homotopy groups,
all of finite cardinality, and is defined as:

$$\# X := \sum_{[x] \in \pi_0(X)} \frac{|\pi_2(x, X)| |\pi_4(x, X)| \cdots}{|\pi_1(x, X)| |\pi_3(x, X)|
\cdots}.$$

The connection to the previously introduced notion of groupoid
cardinality comes from identifying $[S/G]$
with the homotopy-theoretic quotient---i.e., replacing $S$ with a space homotopy equivalent
to it on which $G$ acts freely, and then taking the quotient---whereupon
$\pi_0([S/G])$ is identified with the set of orbits,  $\pi_1(s, [S/G])$ is identified with the stabilizer
of $s$, and all higher $\pi_i$ are trivial.  One previous use of the
notion of homotopy cardinality was to make
sense of the Dijkgraaf--Witten theory with a higher groupoid as gauge group
\cite{JP}.

To apply this notion here, we take the groupoid part of the augmentation category.
That is, we write $\pi_{\ge 0} \Aug(\Lambda; \F_q)^*$ for
the groupoid consisting of isomorphisms in the truncation
$\pi_{\ge 0} \Aug(\Lambda; \F_q)$.  We have cohomological conventions for maps
in $\Aug$, so $\pi_{\ge 0}$ means that we take only morphisms in non-positive degree.
The homotopy cardinality of (the classifying space of) this groupoid
is then given by the formula:
\[ \# \pi_{\ge 0} \Aug(\Lambda; \F_q)^* = \sum_{[\epsilon] \in \Aug(\Lambda; \F_q)/\sim}
\frac{1}{\lvert\Aut(\epsilon)\rvert} \cdot \frac{| H^{-1} \Hom(\epsilon, \epsilon) | \cdot | H^{-3} \Hom(\epsilon, \epsilon) |
\cdots}{|H^{-2} \Hom(\epsilon,
\epsilon)| \cdot |H^{-4} \Hom(\epsilon,\epsilon)| \cdots}, \]
where $\Aut(\epsilon)$ is the set of units of $H^*\Hom(\epsilon,\epsilon)$.  Here we relate this quantity to the naive count of augmentations:

\begin{theorem} \label{thm:card}
We have:
$$\# \pi_{\ge 0} \Aug(\Lambda; \F_q)^*
= q^{\frac{tb - \chi_*}{2}} \cdot (q-1)^{-\ell}
 \cdot \#  \{ \cA(\Lambda) \to \F_q \}.$$
\end{theorem}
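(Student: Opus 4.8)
The plan is to unpack both sides of the claimed identity into sums over equivalence classes of augmentations and then match term by term, reducing everything to a single computation of the Poincaré polynomial of the linearized cohomology $H^*\Hom(\epsilon,\epsilon)$. First I would recall the structural facts about $\Aug(\Lambda;\F_q)$ established in \cite{NRSSZ}: for a single augmentation $\epsilon$, the cohomology $H^*\Hom(\epsilon,\epsilon)$ is a version of linearized Legendrian contact cohomology, it is concentrated in degrees bounded in terms of the degrees of Reeb chords, and it satisfies a duality (a long exact sequence relating $H^k$ and $H^{1-k}$, à la Sabloff) together with the normalization $H^0\Hom(\epsilon,\epsilon) \cong \F_q$ in the connected (knot, $\ell=1$) case, more generally $H^0 \cong \F_q^{\ell}$ with $\Aut(\epsilon)$ the units $(\F_q^\times)^{\ell}$ up to the unipotent part. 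The key numerical input is that the graded Euler characteristic of $\Hom(\epsilon,\epsilon)$, namely $\sum_k (-1)^k \dim H^k\Hom(\epsilon,\epsilon)$, is independent of $\epsilon$ and equals a quantity expressible through $tb$ and $\chi_*$; this is exactly the Euler-characteristic bookkeeping already present in \cite{NS} and \cite{NRSSZ}.

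Next I would carry out the term-by-term comparison. Fix an equivalence class $[\epsilon]$ and write $n_k := \dim_{\F_q} H^{-k}\Hom(\epsilon,\epsilon)$ for $k \ge 0$. The contribution of $[\epsilon]$ to the left-hand side is
\[
\frac{1}{|\Aut(\epsilon)|}\cdot \frac{q^{n_1}\,q^{n_3}\cdots}{q^{n_2}\,q^{n_4}\cdots}
= \frac{1}{|\Aut(\epsilon)|}\cdot q^{\sum_{k\ge 1}(-1)^{k+1} n_k}.
\]
On the right-hand side, $\#\{\cA(\Lambda)\to\F_q\}$ decomposes as a sum over equivalence classes, and the number of augmentations in the class $[\epsilon]$ is $q^{(\text{something})}\cdot|\Aut(\epsilon)|/(\text{stabilizer normalization})$; more precisely, the orbit-stabilizer principle for the action of $\dga$-homotopies (or the explicit count of augmentations homotopic to a fixed one in terms of $\dim H^0$ and the "bounding" degree-$(-1)$ part) expresses the size of the homotopy class in terms of the $n_k$ and of $n_0$. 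Substituting and using $n_0 = \dim H^0 = \ell$ (with $|\Aut(\epsilon)| = (q-1)^\ell$ in the semisimple case, adjusted by the unipotent radical in general, which cancels against the same factor appearing in the orbit count), the per-class identity reduces to the assertion
\[
\frac{tb - \chi_*}{2} + \bigl(\text{orbit-count exponent}\bigr) \;=\; \sum_{k\ge 1}(-1)^{k+1} n_k,
\]
which is precisely the statement that the alternating sum $\sum_{k}(-1)^k n_k$ equals $\chi_*(\Lambda)/2 - tb/2 + \ell$ up to the conventions — a restatement of the Euler characteristic computation. I would prove this last equality by relating $\sum_k (-1)^k \dim H^{-k}\Hom(\epsilon,\epsilon)$ to $-\chi_*$ via the linearization of the differential (each Reeb chord of degree $i$ contributes to $\Hom(\epsilon,\epsilon)$ in degree $i$ or its dual, and the signs in $\chi_*$ are exactly chosen to make this work), together with the contribution of the cusps/the identity part accounting for the $tb$ and $\ell$ terms — this is essentially \cite[proof of the main count]{NS} repackaged.

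The main obstacle, I expect, is the bookkeeping in the non-simply-connected ($\ell > 1$) case and, more subtly, the presence of the unipotent part of the endomorphism ring: $H^0\Hom(\epsilon,\epsilon)$ need not be reduced, so $\Aut(\epsilon)$ is an extension of $(\F_q^\times)^\ell$ by a unipotent group, and one must check that the extra unipotent factor in $|\Aut(\epsilon)|$ is exactly cancelled by the corresponding overcounting in the size of the $\dga$-homotopy class of augmentations (the homotopies themselves form a torsor under a group whose cardinality involves the same degree-$0$ and degree-$(-1)$ data). Making this cancellation precise requires identifying the homotopy-equivalence relation on augmentations with the path-components of a Kan complex whose higher homotopy groups are the $H^{-k}$ for $k \ge 1$ and whose $\pi_1$ is $\Aut(\epsilon)$ — i.e., realizing $\pi_{\ge 0}\Aug(\Lambda;\F_q)^*$ literally as the space whose homotopy cardinality appears on the left — and then invoking the definition of homotopy cardinality directly rather than the ad hoc formula. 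Once that identification is in place, the theorem follows from the single Euler-characteristic identity above with no further case analysis.
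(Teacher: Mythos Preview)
Your overall strategy---decompose both sides over isomorphism classes and reduce the per-class comparison to an Euler-characteristic identity---is the paper's strategy, and the final algebraic manipulation you describe is essentially what appears at the end of the paper's proof. The genuine gap is the step where you count the augmentations inside a single isomorphism class. You invoke an ``orbit--stabilizer principle for the action of \dga{} homotopies,'' but \dga{} homotopies do not form a group and do not act on the set of augmentations, so no such principle is directly available; your fallback assumption $H^0\Hom(\epsilon,\epsilon)\cong\F_q^{\ell}$ with $|\Aut(\epsilon)|=(q-1)^{\ell}$ is false in general (the $m(9_{45})$ example in the introduction has $|\Aut(\epsilon)|=2$ over $\F_2$ for a knot), and while you correctly flag the ``unipotent part'' as the obstacle, you supply no mechanism for the hoped-for cancellation beyond the suggestion to build a Kan-complex model, which you do not carry out. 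Sabloff duality, which you also invoke, is not used (and not needed) in the paper's argument.

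The paper replaces this step with a concrete counting argument that never analyzes the structure of $\Aut(\epsilon)$. It proves (Proposition~\ref{prop:unique-isomorphism}) that for fixed $\epsilon$, every degree-zero element $\alpha$ with invertible $y_i^+$-coefficients is a closed isomorphism $\epsilon\to\epsilon'$ for a \emph{uniquely determined} target $\epsilon'$, and (Proposition~\ref{prop:iso-bijection}, which requires some care since $m_2$ is not associative) that whenever $\epsilon'\cong\epsilon$ the set $\Hom^0(\epsilon,\epsilon')^\times$ is in bijection with $\Hom^0(\epsilon,\epsilon)^\times$. These two facts give
\[
\#\{\epsilon'\cong\epsilon\}\cdot|\Hom^0(\epsilon,\epsilon)^\times|=(q-1)^{\ell} q^{r'},
\]
with $r'$ the number of degree $-1$ Reeb chords; since $|\Hom^0(\epsilon,\epsilon)^\times|=|\Aut(\epsilon)|\cdot|B^0(\epsilon,\epsilon)|$, one obtains Corollary~\ref{cor:count2} with $|\Aut(\epsilon)|$ appearing only as an undissected factor. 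The Euler-characteristic bookkeeping you identified then finishes the proof.
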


\begin{remark} This suggests that $\# \pi_{\ge 0} \Aug(\Lambda; \F_q)^*$ is
a quotient of the set $\{ \cA(\Lambda) \to \F_q \}$ by some higher groupoid
whose cardinality is $q^{\frac{\chi_* - tb}{2}} \cdot (q-1)^{\ell}$.
\end{remark}

We deduce the following result about counting objects in the sheaf category,
conjectured in \cite[Conjecture~7.5]{STZ}.

\begin{corollary}  \label{cor:ruling-polynomial}
We have the equalities:
$$\# \pi_{\ge 0} Sh_{\Lambda; 1} (\R^2; \F_q)^* =
\# \pi_{\ge 0} \Aug(\Lambda; \F_q)^*
= q^{tb(\Lambda)/2} R_\Lambda(q^{1/2}-q^{-1/2}).$$
\end{corollary}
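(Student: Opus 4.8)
The plan is to deduce Corollary~\ref{cor:ruling-polynomial} by combining Theorem~\ref{thm:card} with two inputs already recorded in the introduction: the Henry--Rutherford identity \eqref{eq:hr} and the equivalence of the augmentation category with the microlocal-rank-one sheaf category. First I would establish the right-hand equality. By Theorem~\ref{thm:card},
\[
\# \pi_{\ge 0} \Aug(\Lambda; \F_q)^* = q^{\frac{tb - \chi_*}{2}} (q-1)^{-\ell} \cdot \# \{ \cA(\Lambda) \to \F_q \},
\]
and by \eqref{eq:hr},
\[
q^{-\chi_*/2} (q-1)^{-\ell} \cdot \# \{ \cA(\Lambda) \to \F_q \} = R_\Lambda(q^{1/2} - q^{-1/2}).
\]
Dividing, the factor $q^{-\chi_*/2}(q-1)^{-\ell} \# \{\cA(\Lambda)\to\F_q\}$ is common to both, so $\# \pi_{\ge 0} \Aug(\Lambda; \F_q)^* = q^{tb/2} \cdot q^{-\chi_*/2} (q-1)^{-\ell} \# \{\cA(\Lambda)\to\F_q\} = q^{tb/2} R_\Lambda(q^{1/2}-q^{-1/2})$. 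This part is pure bookkeeping once Theorem~\ref{thm:card} is in hand.

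For the left-hand equality, the key point is that the equivalence $\Aug(\Lambda; \F_q) \simeq Sh_{\Lambda; 1}(\R^2; \F_q)$ established in \cite{NRSSZ,STZ} (the subcategory of compactly supported, microlocal rank one objects in $Sh_{\Lambda}(\R^2;\F_q)$ corresponds to the augmentation category, as recalled in the introduction) is an $A_\infty$-equivalence. I would argue that homotopy cardinality of $\pi_{\ge 0}(-)^*$ is invariant under $A_\infty$-equivalence: an equivalence induces a bijection on isomorphism classes of objects and isomorphisms of the graded endomorphism algebras $H^*\Hom(\e,\e)$, hence preserves $|\Aut(\e)|$ and each $|H^{-i}\Hom(\e,\e)|$, and therefore preserves the defining sum for $\# \pi_{\ge 0}(-)^*$. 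Applying this to the equivalence above yields $\# \pi_{\ge 0} Sh_{\Lambda;1}(\R^2;\F_q)^* = \# \pi_{\ge 0} \Aug(\Lambda;\F_q)^*$.

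The main obstacle is making the left-hand equality genuinely rigorous rather than a formal citation: one must be careful that the equivalence of \cite{NRSSZ,STZ} is stated (or can be upgraded to) an equivalence at the level of $A_\infty$-categories with the same grading conventions, and that passing to the truncation $\pi_{\ge 0}$ and then to the maximal subgroupoid of isomorphisms is functorial enough that the induced map is still an equivalence of groupoids. One also needs to confirm that $Sh_{\Lambda;1}(\R^2;\F_q)$ has enough finiteness (finitely many isomorphism classes, finite-dimensional $\Hom$-spaces) for the homotopy cardinality to be defined; this follows because the equivalence transports the finiteness already known on the augmentation side. I do not expect any new computation here — the work is entirely in checking that the cited equivalences interact cleanly with the truncation and groupoid-completion operations, after which the corollary is immediate.
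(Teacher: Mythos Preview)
Your proposal is correct and matches the paper's own proof: the second equality is obtained by combining Theorem~\ref{thm:card} with \eqref{eq:hr}, and the first equality follows because the homotopy cardinality depends only on the cohomology category, which \cite{NRSSZ} shows is the same for $\Aug$ and $Sh_{\Lambda;1}$. Your additional remarks about finiteness and compatibility of truncations are reasonable sanity checks but, as you anticipate, require no new ideas.
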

\begin{proof}
The first equality holds by \cite{NRSSZ}, since both sides only depend on the cohomology
categories, which are isomorphic.  The second equality follows by combining Theorem
\ref{thm:card} with Equation~\eqref{eq:hr}.
\end{proof}

One application of this theorem is a strong restriction on Legendrian knots which we call \emph{doubly Lagrangian slice}.  A theorem of Eliashberg and Polterovich \cite{EP} (cf.\ \cite{Cha-symmetric}) implies that any Lagrangian concordance from the standard Legendrian unknot $U$ with $tb(U)=-1$ to itself, i.e., a Lagrangian cylinder in $\R \times \R^3$ which agrees with the product $\R \times U$ outside a compact set, can in fact be identified with $\R \times U$ by a compactly supported Hamiltonian isotopy.  It is possible, however, that for some nontrivial Legendrian $\Lambda$ there are Lagrangian concordances from $U$ to $\Lambda$ and from $\Lambda$ to $U$---in other words, that a Lagrangian concordance $L$ from $U$ to itself might satisfy
\[ L \cap \big((-\epsilon,\epsilon) \times \R^3\big) = (-\epsilon,\epsilon) \times \Lambda \]
for some $\epsilon > 0$---and in this case we say that $\Lambda$ is doubly Lagrangian slice.  In Section~\ref{sec:doubly-slice} we prove the following.
\begin{theorem}
If $\Lambda$ is doubly Lagrangian slice, then there is an $A_\infty$ quasi-equivalence $\Aug(\Lambda;\F_q) \cong \Aug(U;\F_q)$ over any finite field $\F_q$.
\end{theorem}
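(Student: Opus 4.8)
The plan is to realize $\Aug(\Lambda;\F_q)$ as a retract of itself onto a copy of $\Aug(U;\F_q)$ by cutting the doubly slice concordance into its two halves, and then to force this retract to be a quasi-equivalence using the homotopy cardinality computation of Corollary~\ref{cor:ruling-polynomial}. First I would write $L = L_2\circ L_1$ (stack $L_1$ below $L_2$), where $L_1$ is an exact Lagrangian concordance from $U$ to $\Lambda$ and $L_2$ one from $\Lambda$ to $U$; this is legitimate because $L$ is cylindrical near the slicing level, so one may take $L_1$ to be the part of $L$ at non-positive values of the symplectization coordinate and $L_2$ the part at non-negative values. By the functoriality of the Chekanov--Eliashberg \dga{} and of the augmentation category under exact Lagrangian concordance --- the cobordism \dga{} map $\cA(\Lambda_+)\to\cA(\Lambda_-)$ of Ekholm, Honda and K\'alm\'an, upgraded to a unital $A_\infty$-functor that is well defined up to $A_\infty$-homotopy and respects concatenation --- each half induces an $A_\infty$-functor $F_1\colon\Aug(U;\F_q)\to\Aug(\Lambda;\F_q)$ and $F_2\colon\Aug(\Lambda;\F_q)\to\Aug(U;\F_q)$. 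By the Eliashberg--Polterovich theorem $L$ is Hamiltonian isotopic to $\R\times U$, and a trivial cylinder induces the identity functor up to $A_\infty$-homotopy, so $F_2\circ F_1\simeq\mathrm{id}_{\Aug(U;\F_q)}$. In particular $F_1$ is cohomologically faithful and injective on isomorphism classes of objects, and $e := F_1\circ F_2$ is a homotopy idempotent endofunctor of $\Aug(\Lambda;\F_q)$ whose essential image is quasi-equivalent to $\Aug(U;\F_q)$; since $F_1$ is already faithful, the theorem reduces to showing that $F_1$ is cohomologically full and essentially surjective, i.e.\ that $e\simeq\mathrm{id}$.

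To establish $e\simeq\mathrm{id}$ I would compare homotopy cardinalities, which are invariant under $A_\infty$-equivalence. By Chantraine's theorem an exact Lagrangian concordance preserves the classical invariants, so $tb(\Lambda) = tb(U) = -1$ and the rotation number of $\Lambda$ vanishes, hence the $\Z$-grading on $\Aug(\Lambda;\F_q)$ is defined. Corollary~\ref{cor:ruling-polynomial} then gives $\#\pi_{\ge 0}\Aug(\Lambda;\F_q)^* = q^{-1/2}R_\Lambda(q^{1/2}-q^{-1/2})$ and $\#\pi_{\ge 0}\Aug(U;\F_q)^* = q^{-1/2}R_U(q^{1/2}-q^{-1/2}) = \tfrac{1}{q-1}$. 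So it suffices to prove $R_\Lambda(z) = z^{-1}$, equivalently that $\Lambda$ has exactly one graded normal ruling and that ruling has genus $0$. One direction is automatic: concatenating $L_1$ with the flat Lagrangian disk filling of $U$ realizes $\Lambda$ as fillable by an exact Lagrangian disk, and such a filling produces a genus-$0$ graded normal ruling, so the $z^{-1}$-coefficient of $R_\Lambda$ is at least $1$. The real content is that there is nothing more. Granting $R_\Lambda(z) = z^{-1}$, the two homotopy cardinalities agree; combined with the fact that $F_1$ already embeds $\Aug(U;\F_q)$ as a faithful retract --- whose image accounts for a positive part of the cardinality --- and that every remaining augmentation of $\Lambda$, as well as every nontrivial negative-degree self-$\Hom$, contributes strictly positively to the cardinality sum, this leaves no room for $e$ to differ from $\mathrm{id}$, and $F_1$ is the asserted $A_\infty$ quasi-equivalence.

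The step I expect to be the main obstacle is the matching upper bound $R_\Lambda(z)\le z^{-1}$ (equivalently $\#\pi_{\ge 0}\Aug(\Lambda;\F_q)^*\le\tfrac{1}{q-1}$): the formal retract only yields a lower bound, and ruling this out is precisely what distinguishes the doubly slice hypothesis from the much weaker hypothesis that $\Lambda$ be Lagrangian fillable by a disk, under which the conclusion fails in general. Concretely one must rule out augmentations of $\Lambda$ not isomorphic to $F_1(\epsilon_0)$, where $\epsilon_0$ is the unique augmentation of $U$, and show that $H^*\Hom_{\Aug(\Lambda;\F_q)}(F_1(\epsilon_0),F_1(\epsilon_0))$ is no larger than $H^*\Hom_{\Aug(U;\F_q)}(\epsilon_0,\epsilon_0)$; the leverage available is that objects of $\Aug$ are microlocal rank one, hence have local endomorphism algebras with residue field $\F_q$, together with Sabloff--Poincar\'e duality, which forces $\chi\bigl(H^*\Hom(\eta,\eta)\bigr) = 0$ for every augmentation $\eta$ of the connected knot $\Lambda$. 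I would seek the missing inequality in one of three places: a closer analysis of how the cobordism functors $F_1,F_2$ interact with the Sabloff pairing, so that the retracted subcategory is forced to carry the full homotopy cardinality; transport of the problem to the equivalent sheaf category $Sh_{\Lambda;1}(\R^2;\F_q)$, where idempotents split and $F_1$ splits off a genuine summand equivalent to $Sh_{U;1}(\R^2;\F_q)$ whose complement must then be shown acyclic; or a direct monotonicity statement for ruling polynomials under exact Lagrangian concordance. Any one of these, inserted at this point, completes the argument.
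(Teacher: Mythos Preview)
Your overall strategy---produce the functor $F_1\colon\Aug(U;\F_q)\to\Aug(\Lambda;\F_q)$ from the bottom concordance and then use the homotopy cardinality to force it to be an equivalence---is exactly the paper's. But the two gaps you flag are the entire content of the proof, and the paper fills them with specific inputs you have not identified; moreover, one step of your outline is not correct as written.

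First, the identity $R_\Lambda(z)=z^{-1}$ is not something you need to derive here: it is a theorem of Cornwell--Ng--Sivek~\cite{CNS}, quoted as such in the paper. Your three proposed routes to it would each be a separate project, and none is needed.

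Second, even granting $R_\Lambda(z)=z^{-1}$ so that both categories have homotopy cardinality $\tfrac{1}{q-1}$, your ``no room'' argument in the second paragraph does not go through. You assert that every nontrivial negative-degree self-$\Hom$ ``contributes strictly positively'' to the cardinality, but the formula places $|H^{-2k}|$ and $|\Aut|$ in the \emph{denominator}: extra even negative cohomology, or a larger automorphism group of $\e_L:=F_1(\e_U)$, would \emph{decrease} the contribution of $[\e_L]$ below $\tfrac{1}{q-1}$ and thus make room for further isomorphism classes. The retract from Eliashberg--Polterovich gives only that $F_1$ is faithful, which does not bound $H^*\Hom(\e_L,\e_L)$ from above.

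The paper closes both issues at once with a fact you already have in hand but do not exploit: capping $L_1$ with the standard disk gives an exact Lagrangian \emph{disk} filling of $\Lambda$ inducing $\e_L$, and by \cite[Proposition~5.7]{NRSSZ} this forces $H^*\Hom(\e_L,\e_L)\cong H^*(D^2;\F_q)\cong\F_q$, concentrated in degree~$0$. Hence $\Aut(\e_L)=\F_q^\times$ and there is no negative-degree cohomology, so $[\e_L]$ contributes \emph{exactly} $\tfrac{1}{q-1}$ to the cardinality; every other isomorphism class would add a strictly positive amount, so there are none and $F_1$ is essentially surjective. Fullness is then immediate: $F_1$ is unital and both $H^*\Hom(\e_U,\e_U)$ and $H^*\Hom(\e_L,\e_L)$ are one-dimensional, spanned by their respective units. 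Your Eliashberg--Polterovich retract $F_2\circ F_1\simeq\mathrm{id}$ is a correct observation---the paper uses exactly this device in the remark on cables---but it is unnecessary for the main statement.
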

\noindent Given an augmentation $\e :\thinspace \cA(\Lambda) \to \F_q$, one can complete $\cA(\Lambda)$ to the $I$-adic completion $\widehat{\cA}(\Lambda)$, where $I$ is the ideal $\ker\e \subset \cA(\Lambda)$.
Using the above, we also prove that there is a quasi-isomorphism $\widehat{\cA}(\Lambda) \to \widehat{\cA}(U)$, where $\widehat{\cA}(U)$ is easily computed to be the power series ring $\F_q[[a]]$ with trivial differential, whenever $\Lambda$ is doubly Lagrangian slice.  See Proposition~\ref{prop:completion} for details.

So far, our discussion has been for augmentations and rulings with a full $\Z$ grading. One can also consider rulings and the augmentation category graded in $\Z/(2m)$ for some $m\in\Z$, where as mentioned earlier the $m=1$ case is related to the HOMFLY-PT polynomial. The complexes in $\Aug(\Lambda;\F_q)$ are now $2m$-periodic. It is then unclear what
should substitute for the homotopy cardinality $\# \pi_{\ge 0}
\Aug(\Lambda; \F_q)^*$, but we provide a candidate in
Corollary~\ref{cor:2mgraded} that we conjecture is related to the
$2m$-graded ruling polynomial of $\Lambda$ in the same way as
Theorem~\ref{thm:card}. For the $2$-periodic case, for example, we expect the homotopy cardinality to be given by
\[
\sum_{[\epsilon] \in \Aug(\Lambda; \F_q)/\sim}
\frac{1}{\lvert\Aut(\epsilon)\rvert} |H^{\operatorname{ev}}\Hom(\e,\e)|^{1/2} q^{-\ell/2},
\]
where $H^{\operatorname{ev}}$ denotes cohomology in even degree.

In Section~\ref{sec:counting}, we prove our main result, Theorem~\ref{thm:card}.
We discuss the application to doubly Lagrangian slice knots in
Section~\ref{sec:doubly-slice}, and the $\Z/(2m)$-graded setting in Section~\ref{sec:2m-graded}.

\subsection*{Acknowledgments}
We would like to thank the American Institute of Mathematics for
sponsoring a SQuaRE group, ``Sheaf theory and Legendrian knots'', at whose 2015 meeting we began the work for this paper. We are also grateful to the other participants of the SQuaRE, David Treumann, Harold Williams, and Eric Zaslow, for helpful conversations.  We also thank Andr\'e Henriques, Brad Henry, and Qiaochu Yuan.
The work of LN was supported by NSF grant DMS-1406371 and a grant from the Simons Foundation (\#341289 to Lenhard Ng).
The work of VS was supported by NSF grant DMS-1406871 and a Sloan fellowship.
The work of SS was supported by NSF grant DMS-1506157.

\section{Counting}
\label{sec:counting}

We turn to the proof of Theorem \ref{thm:card}.  Throughout this section, we consider augmentations with values in a field $\coeffs$, which will be assumed to be a finite field $\F_q$ when counting is necessary.
The basic idea to prove Theorem~\ref{thm:card} is to establish
a correspondence between the (not necessarily closed) degree zero automorphisms
of a given augmentation on the one hand, and isomorphic augmentations on the other.
The correspondence will be many to one, but the discrepancy will be accounted for by
homotopies, higher homotopies, etc.

The starting point is the comparison between morphisms in the augmentation category
and \dga{} homotopies.  Recall that if $f, f': \cA \to \cB$ are \dga{} morphisms, then
a chain homotopy between them, i.e., a map $h: \cA \to \cB$ with $d_{\cB} h \pm h d_{\cA} =
f - f'$, is a \dga{} homotopy between them if $h$ is an $(f, f')$-derivation, i.e.,
$h(ab) = f(a) h(b) \pm h(a) f'(b)$.

Recall also from \cite{NRSSZ} that in the case where $\Lambda$ is a knot,
if Hom spaces of the augmentation
category are calculated by perturbing the $2$-copy via a Morse function on the knot
with a single pair of critical points (the ``Lagrangian projection perturbation''), then
each $\Hom(\e,\e')$ is generated as a module by certain terms $a_i^+$ corresponding to
the Reeb chords of the original knot, together with a degree zero element $y^+$ coming from
one of the critical points of the Morse function, and a degree one element $x^+$ coming from
the other.  The element $-y^+$ is a strict identity on self-hom spaces, which we write as
$e_\e \in \Hom(\e,\e)$.  Note that the degree of the generator of the Hom space corresponding
to a Reeb chord is $1$ greater than the degree of the corresponding generator of the \dga{}.
There is a precise relation between \dga{} homotopies and closed isomorphisms in $\Aug(\Lambda)$:

\begin{proposition}[{\cite[Proposition~5.16]{NRSSZ}}] \label{prop:nrssz-homotopy}
Every general element of $\Hom^0(\e,\e')$ of the form
\[ \alpha := -y^+ - \sum k_i a^+_i \]
is invertible, meaning that there are elements
$\beta,\gamma\in\Hom^0(\e',\e)$ such that $m_2(\beta,\alpha) = e_{\e}$
and $m_2(\alpha,\gamma) = e_{\e'}$.  Moreover, we have $m_1(\alpha)=0$
if and only if the $(\e,\e')$-derivation $K: \cA(\Lambda) \to \coeffs$
defined by requiring $K(a_i)=k_i$ for each $i$ is a \dga{} homotopy from $\e$ to $\e'$.
\end{proposition}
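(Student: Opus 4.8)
The plan is to work throughout in the Lagrangian-projection $n$-copy model of \cite{NRSSZ}, in which $\Hom(\e,\e')$ is, as a graded $\coeffs$-module, a fixed module with basis $\{y^+,x^+\}\cup\{a_i^+\}$ (independent of $\e,\e'$, with $|y^+|=0$, $|x^+|=1$ and $|a_i^+|=|a_i|+1$), while the operations $m_1$ and $m_2$ are extracted from the Chekanov--Eliashberg differential of the Morse-perturbed $2$-copy of $\Lambda$ (for $m_1$) and $3$-copy of $\Lambda$ (for $m_2$), after linearizing by $\e$ on one strand and $\e'$ on the others. The first step is to record three model-level facts. (i) On the $\coeffs$-span of the $a_i^+$, $m_1$ is the transpose of the bilinearized differential $d_{\e,\e'}$, where $d_{\e,\e'}(a_j)=\sum\pm\,\e(\mathbf{u})\,\e'(\mathbf{v})\,a_i$, the sum running over the factorizations $\mathbf{u}\,a_i\,\mathbf{v}$ of the words occurring in $\partial a_j$; moreover $m_1(y^+)=\sum_{|a_j|=0}(\e'(a_j)-\e(a_j))\,a_j^+$, and neither $m_1(y^+)$ nor $m_1(a_i^+)$ with $|a_i|=-1$ has an $x^+$-component. (ii) There is a finite increasing action filtration of $\Hom(\e,\e')$ in which $x^+,y^+$ have minimal action and the $a_i^+$ are ordered by the lengths $\ell(a_i)$ of the corresponding Reeb chords; by positivity of symplectic area, every disk contributing to $m_2(a_i^+,a_j^+)$ has output a generator $a_k^+$ with $\ell(a_k)>\ell(a_i)+\ell(a_j)$, so $m_2(a_i^+,a_j^+)$ lies strictly above both inputs in the filtration. (iii) Under the canonical identification of the underlying modules of the various $\Hom(\cdot,\cdot)$, one has $m_2(-y^+,\xi)\equiv\xi$ and $m_2(\xi,-y^+)\equiv\xi$ modulo strictly higher action: the area-zero contribution passes through no Reeb chord of $\Lambda$, so it agrees with the self-Hom computation, where $-y^+$ is the strict unit.

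\emph{Invertibility.} Given $\alpha=-y^+-\sum_i k_i a_i^+$, look for $\beta=-y^+ +\sum_i \ell_i a_i^+\in\Hom^0(\e',\e)$ with $m_2(\beta,\alpha)=e_\e$. Expanding bilinearly and applying (ii)--(iii): the coefficient of $y^+$ in $m_2(\beta,\alpha)$ is $-1$, matching $e_\e=-y^+$; and for each $k$ the coefficient of $a_k^+$ equals $\ell_k-k_k$ plus a correction that depends only on those $\ell_i,k_i$ with $\ell(a_i)<\ell(a_k)$ (and on the fixed lower-order part of $m_2(-y^+,-y^+)$). Since $\Lambda$ has finitely many Reeb chords, setting all these coefficients to zero gives a triangular system that we solve for the $\ell_i$ recursively in order of increasing $\ell(a_i)$; this determines $\beta$ uniquely. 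The mirror computation produces $\gamma\in\Hom^0(\e',\e)$ with $m_2(\alpha,\gamma)=e_{\e'}$ --- we do not need $\beta=\gamma$. (Any element of $\Hom^0(\e,\e')$ with nonzero $y^+$-coefficient is a unit multiple of one of the stated form, hence is likewise invertible.)

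\emph{The homotopy criterion.} Since $\coeffs$ carries the zero differential and $\e,\e'$ are chain maps, a degree-raising $(\e,\e')$-derivation $K\colon\cA(\Lambda)\to\coeffs$ is a \dga{} homotopy from $\e$ to $\e'$ precisely when $K\circ\partial=\e-\e'$ as maps $\cA(\Lambda)\to\coeffs$ (suppressing signs); since both sides are $(\e,\e')$-derivations it suffices to test on generators $a_j$, and --- because $K$ vanishes outside degree $-1$ --- the equation is automatic unless $|a_j|=0$. Writing $k_i:=K(a_i)$ (necessarily supported on $|a_i|=-1$) and expanding $K(\partial a_j)$ via the twisted Leibniz rule, the condition for $|a_j|=0$ becomes $\sum_i k_i\langle a_i,d_{\e,\e'}(a_j)\rangle=\e(a_j)-\e'(a_j)$. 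On the other hand, by (i) the coefficient of $a_j^+$ (for $|a_j|=0$) in $m_1(\alpha)=-m_1(y^+)-\sum_i k_i\,m_1(a_i^+)$ equals $(\e(a_j)-\e'(a_j))-\sum_i k_i\langle a_i,d_{\e,\e'}(a_j)\rangle$, while the $x^+$-coefficient of $m_1(\alpha)$ vanishes identically for $\alpha$ of this form. Hence $m_1(\alpha)=0$ holds if and only if these scalar equations hold for every $j$ with $|a_j|=0$, i.e.\ if and only if $K$ is a \dga{} homotopy from $\e$ to $\e'$.

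\emph{The main obstacle.} The substantive part is the model-level input (i)--(iii): pinning down the explicit form of $m_1$ and $m_2$ on $y^+,x^+$ and the $a_i^+$ in the $n$-copy construction --- in particular, that $m_1$ has no $x^+$-component on $y^+$ or on the degree-$(-1)$ generators (so the number of equations in $m_1(\alpha)=0$ matches the number of \dga{}-homotopy equations), that $m_2$ strictly raises the action filtration on Reeb-chord generators, and that $-y^+$ is a two-sided unit modulo higher action --- together with the correct signs and the degree shift $|a_i^+|=|a_i|+1$. With those facts in place, invertibility is the triangular solve above and the homotopy criterion is a comparison of coefficients.
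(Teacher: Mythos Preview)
The paper does not give its own proof of this proposition---it is simply quoted from \cite{NRSSZ}---so there is no argument here to compare against directly. That said, the paper does carry out essentially the same computation in the multi-component setting (Proposition~\ref{prop:unique-isomorphism} and the surrounding discussion), and your approach matches that one: establish the explicit formulas for $m_1$ and $m_2$ in the $n$-copy model, use the height/action filtration to make the $m_2$-equation $m_2(\beta,\alpha)=e_\e$ upper-triangular and solve recursively, and then identify the vanishing of the $a_j^+$-coefficients of $m_1(\alpha)$ with the generator-by-generator form of the \dga{} homotopy equation $K\circ\partial=\e-\e'$.

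There is one genuine slip in your model-level input (i). You assert that $m_1(y^+)$ has no $x^+$-component, and later that ``the $x^+$-coefficient of $m_1(\alpha)$ vanishes identically.'' In the setting of the paper (where $\cA(\Lambda)$ carries a base-point generator $t$), this is false: the computation in the proof of Proposition~\ref{prop:unique-isomorphism} gives
\[
m_1(y^+)=\bigl(\e(t)^{-1}\e'(t)-1\bigr)x^+ + \sum_{|a_j|=0}\bigl(\e'(a_j)-\e(a_j)\bigr)a_j^+,
\]
so the $x^+$-coefficient of $m_1(\alpha)$ is $\e(t)^{-1}\e'(t)-1$. This matters for the equivalence you are proving: on the one hand, $m_1(\alpha)=0$ forces $\e(t)=\e'(t)$; on the other, since $\partial t=0$ and $|t|=0$, any \dga{} homotopy $K$ from $\e$ to $\e'$ must satisfy $\e(t)-\e'(t)=K(\partial t)=0$. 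Without this term, your ``only if'' direction has a gap: one could have $\e(t)\neq\e'(t)$ while all your Reeb-chord equations hold, in which case your version of $m_1(\alpha)$ would vanish but $K$ would fail to be a homotopy (the equation $K\circ\partial=\e-\e'$ fails on the generator $t$). Once you restore the $x^+$-term, both directions go through exactly as you wrote. The invertibility argument is unaffected, since it lives entirely in degree~$0$.
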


If $\Lambda$ is a link with $\ell$ components, labeled $1,2,\dots,\ell$, we define a function $r \times c$ on the set of Reeb chords such that $r(a_i)$ and $c(a_i)$ are the labels of the overcrossing and undercrossing strands of $\Lambda$ at the chord $a_i$.  (This is called a \emph{link grading} in \cite{NRSSZ}.)  We assume that $\Lambda$ has a single base point on each component, and the generators $t_i^{\pm 1}$ of $\cA(\Lambda)$ correspond to the base point on component $i$; we will write $r(t_i^{\pm 1}) = c(t_i^{\pm 1}) = i$.  In this case each $\Hom(\e,\e')$ will have generators $y_1^+,\dots,y_\ell^+$ in degree zero and $x_1^+,\dots,x_\ell^+$ in degree one, corresponding to the minima and maxima of a Morse function on $\Lambda$ by which we perturb each component, in addition to the Reeb chord generators $a_i^+$.

In the case where $\Lambda$ has multiple components, the characterization of invertible elements and isomorphisms in $\Aug(\Lambda)$ closely resembles the one described in Proposition~\ref{prop:nrssz-homotopy}.  We will explain this below, showing in Proposition~\ref{prop:unique-isomorphism} that the (not necessarily closed) automorphisms $\alpha \in \Hom^0(\e,\e)$, including those for which $m_1(\alpha) \neq 0$, can instead be viewed as (closed) isomorphisms from $\e$ to some uniquely determined $\e'$.

\begin{lemma} \label{lem:construct-K}
For fixed $\e$, and arbitrarily chosen $k_i \in \coeffs$, there is a unique augmentation $\e': \cA \to \coeffs$ and a unique $(\e, \e')$-derivation $K$ such that $\e'(t_i)=\e(t_i)$ for $i=1,\dots,\ell$; $K(a_i) = k_i$ for all $a_i$; and $\e-\e' = K \circ \partial$.
\end{lemma}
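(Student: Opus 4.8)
The plan is to build $\e'$ and $K$ by induction along an action filtration of $\cA(\Lambda)$. The differential of a Reeb chord is a $\coeffs$-linear combination of words in the $t_j^{\pm 1}$ and in Reeb chords of strictly smaller action (a standard consequence of positivity of symplectic area for the relevant holomorphic disks), while $\partial t_j^{\pm 1} = 0$. Hence we may order the Reeb chords $a_1, \dots, a_n$ by non-decreasing action so that, writing $\cA_{<i}$ for the sub\dga{} generated by all the $t_j^{\pm 1}$ together with $a_1, \dots, a_{i-1}$, we have $\partial a_i \in \cA_{<i}$ for every $i$. I first record the degree bookkeeping on the base points: since $\partial$ has degree $-1$ and $\e - \e'$ has degree $0$, the relation $\e - \e' = K \circ \partial$ forces $K$ to have degree $+1$, and as $\coeffs$ is concentrated in degree zero, $K$ annihilates $\cA(\Lambda)^0$; in particular $K(t_j^{\pm 1}) = 0$. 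Together with $\e'(t_j) := \e(t_j)$ and $\e'(t_j^{-1}) := \e(t_j)^{-1}$ (consistent with $t_j t_j^{-1} = 1$), this fixes all of the base-point data, and it is exactly this degree constraint that will make the pair $(\e', K)$ unique rather than merely existent.

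Next I construct $\e'$ and $K$ on $\cA_{\le i}$ by induction on $i$, carrying two hypotheses: $\e' \circ \partial = 0$ on $\cA_{<i}$, so that $\e'$ restricts to an augmentation there, and $(\e - \e')(w) = K(\partial w)$ for every $w \in \cA_{<i}$. The base case $\cA_{<1} = \coeffs[t_1^{\pm 1}, \dots, t_\ell^{\pm 1}]$ holds trivially with $\e' = \e$ and $K = 0$. For the inductive step, $\partial a_i \in \cA_{<i}$, so $K(\partial a_i) \in \coeffs$ is already defined; I set $\e'(a_i) := \e(a_i) - K(\partial a_i)$ and $K(a_i) := k_i$, and extend $\e'$ as an algebra map and $K$ as an $(\e,\e')$-derivation over $\cA_{\le i}$ (well-definedness against the relations $t_j t_j^{-1} = 1$ is the earlier check). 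This formula is forced: $\e'$ on the base points is prescribed, $K$ on the generators is prescribed, and $\e - \e' = K \circ \partial$ evaluated on $a_i$ leaves no freedom for $\e'(a_i)$; hence any solution coincides with the one just constructed, which gives uniqueness.

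The substance is to verify that this $\e'$ is a \dga{} map on $\cA_{\le i}$ and that the two hypotheses propagate. Because $\partial^2 = 0$, the element $\partial a_i$ is a cycle in $\cA_{<i}$, so the inductive identity $(\e - \e') = K \circ \partial$ applied to $\partial a_i$ gives $(\e - \e')(\partial a_i) = K(\partial \partial a_i) = 0$; since $\e$ is an augmentation $\e(\partial a_i) = 0$, and therefore $\e'(\partial a_i) = 0$. Now $\e' \circ \partial$ is an $(\e', \e')$-derivation of $\cA_{\le i}$ that vanishes on each generator $t_j^{\pm 1}$ and $a_1, \dots, a_i$, hence vanishes identically, so $\e'|_{\cA_{\le i}}$ is an augmentation. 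Finally one upgrades $\e - \e' = K \circ \partial$ from the generators of $\cA_{\le i}$ to all of $\cA_{\le i}$ by induction on word length, using only that $\e, \e'$ are algebra maps with $\e \circ \partial = \e' \circ \partial = 0$ and that $K$ and $\partial$ obey Leibniz rules; the one delicate point is keeping the Koszul signs straight. Running this through $i = n$ produces $\e'$ and $K$ on all of $\cA(\Lambda)$.

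The main obstacle is precisely this last verification, that the recursively defined algebra map $\e'$ is a genuine \dga{} map: the action filtration, the explicit recursion, well-definedness against the $t_j$-relations, and uniqueness are all routine, but the augmentation property of $\e'$ must be pushed through the induction using the combined input of $\partial^2 = 0$, the augmentation property of $\e$, and the Leibniz-rule upgrade of $\e - \e' = K \circ \partial$ from generators to the whole algebra.
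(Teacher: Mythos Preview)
Your proof is correct and follows essentially the same approach as the paper's: both induct along the height/action filtration, set $\e'(a_i) := \e(a_i) - K(\partial a_i)$ at each stage, and verify that $\e'$ is an augmentation using $\partial^2 = 0$ together with the inductive identity $\e - \e' = K\circ\partial$. You are somewhat more explicit than the paper about the base-point generators and about uniqueness, and your derivation of $K(t_j^{\pm 1}) = 0$ from degree considerations is a reasonable way to make explicit what the paper leaves implicit (namely that a \dga{} homotopy has degree $+1$), though strictly speaking this is a convention rather than a consequence of the equation $\e - \e' = K\circ\partial$ alone.
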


\begin{proof}
We make use of the height filtration on the \dga{}, originally due to Chekanov \cite{Che}, which is defined using the lengths of the Reeb chords: if  $a_i$ has height $h(a_i)$, and the differential counts a disk $u$ with positive end at $a_0$ and negative ends at $b_1,b_2,\dots,b_m$, then $h(a_0) > \sum h(b_i)$, since by Stokes's theorem their difference is the area of the projection $\pi_{xy}(u)$.  Thus if the Reeb chords $a_1,\dots,a_r$ are ordered by increasing height, we have $\partial a_i \in F_{i-1} \cA := \coeffs\langle a_1, \ldots, a_{i-1}\rangle$.  We will define $\e'$ inductively by height.

Suppose that $\e'$ has been defined on $F_{i-1} \cA$ so that on $F_{i-1}\cA$ we have $\e' \circ \partial =0$ and $\e-\e' = K \circ \partial$, where $K$ is the extension of the map $a_i \mapsto k_i$ to $F_{i-1} \cA$ as an $(\e,\e')$-derivation.  Then we set  $\e'(a_i) :=  \e(a_i) - K(\partial a_i)$ and extend $\e'$ to $F_i \cA$ as an algebra homomorphism, and we can extend $K$ to $F_i\cA$ as an $(\e,\e')$-derivation satisfying $K(a_i) = k_i$.  We need to check that (i) $\e': (F_i\cA,\partial) \to (\coeffs,0)$ is a chain map, and (ii) $\e-\e' = K\circ\partial$ continues to hold on $F_{i}\cA$.  Note that the identities $\e'\circ \partial_\cA = \partial_\coeffs \circ \e' = 0$ and $\e-\e' = \partial_\coeffs \circ K + K \circ \partial_\cA = K \circ \partial_\cA$ will be satisfied by the algebra maps $\e,\e':\cA \to \coeffs$ and the $(\e,\e')$-derivation $K$ if they are satisfied on all of the generators of $\cA$.

To verify (i), since $\partial a_i \in F_{i-1} \cA$, the inductive hypothesis gives
\[
\e'(\partial a_i) = \e(\partial a_i) - K \circ \partial(\partial a_i) = 0.
\]
For (ii), the identity holds by construction when applied to $a_i$, so it holds on all of $F_i\mathcal{A}$.
\end{proof}

\begin{lemma} \label{lem:rescale-epsilon}
If $\e: \cA(\Lambda) \to \coeffs$ is an augmentation, then for any invertible elements $d_1,\dots,d_\ell \in \coeffs^\times$, the algebra map $\e': \cA(\Lambda) \to \coeffs$ defined on generators by
\[ \e'(a_i) = \frac{d_{r(a_i)}}{d_{c(a_i)}} \e(a_i) \]
is also an augmentation.
\end{lemma}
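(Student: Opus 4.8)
The plan is to exhibit $\e'$ as the composite $\e \circ \psi$ of the given augmentation $\e$ with a suitable \dga{} automorphism $\psi$ of $\cA(\Lambda)$ determined by the scalars $d_1,\dots,d_\ell$.

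First I would recall the \emph{link grading} from \cite{NRSSZ}: extending $r\times c$ multiplicatively gives a decomposition $\cA(\Lambda)=\bigoplus_{i,j}\cA_{ij}$, where $\cA_{ij}$ is spanned by words $b_1b_2\cdots b_m$ in the generators with $r(b_1)=i$, $c(b_m)=j$, and $c(b_k)=r(b_{k+1})$ for $1\le k<m$, and the differential preserves this decomposition. Unwinding this, for any generator $a$ (a Reeb chord, the base points satisfying $\partial t_i^{\pm1}=0$ and being handled trivially) every monomial $b_1\cdots b_m$ occurring in $\partial a$ satisfies the ``composability'' relations $r(b_1)=r(a)$, $c(b_m)=c(a)$, and $c(b_k)=r(b_{k+1})$; recall also that $r(t_i^{\pm1})=c(t_i^{\pm1})=i$.

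Next I would define a $\coeffs$-algebra endomorphism $\psi\colon\cA(\Lambda)\to\cA(\Lambda)$ by $\psi(a)=\frac{d_{r(a)}}{d_{c(a)}}\,a$ on each Reeb-chord generator and $\psi(t_i^{\pm1})=t_i^{\pm1}$; since the $d_i$ are invertible, $\psi$ is a well-defined algebra automorphism. The one real computation is that $\psi\circ\partial=\partial\circ\psi$. On base points this is immediate; on a Reeb-chord generator $a$, apply $\psi$ to a monomial $b_1\cdots b_m$ of $\partial a$ and note that the product of the scalars $d_{r(b_k)}/d_{c(b_k)}$ telescopes---because $c(b_k)=r(b_{k+1})$---to $d_{r(b_1)}/d_{c(b_m)}=d_{r(a)}/d_{c(a)}$ (the signs and integer coefficients of each monomial being untouched by $\psi$). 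Hence $\psi(\partial a)=\frac{d_{r(a)}}{d_{c(a)}}\,\partial a=\partial(\psi a)$, so $\psi$ is a \dga{} automorphism.

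Finally, $\e'=\e\circ\psi$ by inspection on generators, so $\e'\circ\partial=\e\circ\psi\circ\partial=\e\circ\partial\circ\psi=0$, and $\e'(t_i)=\e(t_i)\in\coeffs^\times$, so $\e'$ is an augmentation. The argument has essentially no obstacle: the only input is the compatibility of $\partial$ with the link grading (which is exactly the telescoping identity above), and the only point requiring care is that the $d_i$ must lie in $\coeffs^\times$ for $\psi$ to be defined on the $t_i^{-1}$ (equivalently, for $\e'$ to respect $t_it_i^{-1}=1$), as assumed.
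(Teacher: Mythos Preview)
Your proof is correct and uses the same telescoping computation as the paper: both exploit that for any monomial $b_1\cdots b_m$ appearing in $\partial a$, the composability relations $r(b_1)=r(a)$, $c(b_m)=c(a)$, $c(b_k)=r(b_{k+1})$ force $\prod_k d_{r(b_k)}/d_{c(b_k)} = d_{r(a)}/d_{c(a)}$. The only difference is packaging: the paper applies $\e'$ directly to $\partial a$ and checks that $\e'(\partial a)=\tfrac{d_{r(a)}}{d_{c(a)}}\e(\partial a)=0$, whereas you factor $\e'=\e\circ\psi$ through an explicit \dga{} automorphism $\psi$ and verify $\psi\partial=\partial\psi$. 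Your formulation has the small conceptual bonus of identifying the rescaling as an honest symmetry of $(\cA(\Lambda),\partial)$, but the two arguments are otherwise the same. One minor quibble: invertibility of the $d_i$ is not needed to define $\psi$ on $t_i^{-1}$ (since $\psi(t_i^{\pm1})=t_i^{\pm1}$ regardless); it is only needed to make $\psi$ an automorphism, which your argument does not actually use---a \dga{} endomorphism suffices.
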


\begin{proof}
We need to check that $\e'(\partial a_i) = 0$ for each generator $a_i$ given that $\e(\partial a_i)=0$.  The differential $\partial a_i$ counts disks $u$ with positive end at $a_i$ and negative ends at Reeb chords $b_1,\dots,b_m$, and such disks contribute words $w(u)$ of the form $\pm b_1 \dots b_m$, possibly with $t_i^{\pm 1}$ generators sprinkled throughout, to $\partial a_i$.  It follows that
\[ \e'(w(u)) = \frac{d_{r(b_1)}}{d_{c(b_1)}} \ldots \frac{d_{r(b_m)}}{d_{c(b_m)}} \e(w(u)). \]
But we also know that $r(a_i) = r(b_1)$ and $c(a_i) = c(b_m)$, and that $c(b_j) = r(b_{j+1})$ for $1 \leq j \leq m-1$, so this simplifies to $\e'(w(u)) = \frac{d_{r(a_i)}}{d_{c(a_i)}} \cdot \e(w(u))$.  Summing over all $u$, we have $\e'(\partial a_i) = \frac{d_{r(a_i)}}{d_{c(a_i)}} \cdot \e(\partial a_i) = 0$.
\end{proof}

According to \cite[Proposition~3.28]{NRSSZ}, the category $\Aug(\Lambda)$ is strictly unital, with unit $e_\e = -\sum_{i=1}^\ell y_i^+$ in each $\Hom(\e,\e)$.  Given elements $\alpha \in \Hom^0(\e_1,\e_2)$ and $\beta,\gamma \in \Hom^0(\e_2,\e_1)$, we can compute the products $m_2(\beta,\alpha)$ and $m_2(\alpha,\gamma)$ from \cite[Proposition~4.14]{NRSSZ}, which determines the \dga{} of the 3-copy of $\Lambda$, and \cite[Definition~3.16]{NRSSZ}, which produces the $A_\infty$ operations from the sequence of $n$-copy \dgas{}.  In particular, if $m_2(\beta,\alpha) = e_{\e_1}$ then the coefficients $d^\alpha_i$ and $d^\beta_i$ of each $y_i^+$ in $\alpha$ and $\beta$ respectively must satisfy $d^\alpha_i \cdot d^\beta_i = 1$, and likewise if $m_2(\alpha,\gamma)=e_{\e_2}$ then $d^\alpha_i \cdot d^\gamma_i = 1$.  Moreover, given $\alpha$ with $d^\alpha_i \in \coeffs^\times$, we can repeat the argument of \cite[Proposition~5.17]{NRSSZ} to show that there are in fact some elements $\beta$ and $\gamma$ such that $m_2(\beta,\alpha) = e_{\e_1}$ and $m_2(\alpha,\gamma) = e_{\e_2}$, and that if $m_1(\alpha)=0$ then $m_1(\beta)=m_1(\gamma)=0$ and $[\beta]=[\gamma]$ in $H^0\Hom(\e_2,\e_1)$ as well.  (The construction of $\beta$ and $\gamma$ proceeds once again by induction on height once we replace the $m_2$ computations of \cite[Proposition~5.15]{NRSSZ} with the more general $m_2(y_i^+,a_j^+) = -\delta_{i,c(a_j)} a_j^+$ and $m_2(a_j^+,y_i^+) = -\delta_{i,r(a_j)}a_j^+$.)

\begin{proposition} \label{prop:unique-isomorphism}
Let $\Lambda$ be an $\ell$-component Legendrian link with one base point on each component.  Given an augmentation $\e_1: \cA(\Lambda) \to \coeffs$ and a degree-zero element of the form
\[ \alpha = d_1 y_1^+ + \dots + d_\ell y_\ell^+ + \sum_{j=1}^r k_j a_j^+, \]
where $d_i \in \coeffs^\times$ for all $i$ and $k_j \in \coeffs$ for all $j$, there is a unique augmentation $\e_2: \cA(\Lambda) \to \coeffs$ such that $\alpha$ is an isomorphism in $\Hom^0(\e_1,\e_2)$.
\end{proposition}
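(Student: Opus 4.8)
The plan is to reduce the statement to the two lemmas already established. First I would observe that the element $\alpha$ factors (at the level of its effect on augmentations) as a composition of two moves: a rescaling of the $y_i^+$-coefficients, which should correspond to the diagonal rescaling of Lemma~\ref{lem:rescale-epsilon}, followed by a ``unipotent'' change governed by the $a_j^+$-coefficients, which should correspond to the \dga{} homotopy produced in Lemma~\ref{lem:construct-K}. Concretely, given $\alpha$ with coefficients $d_i \in \coeffs^\times$ and $k_j \in \coeffs$, define $\e'$ from $\e_1$ by the rescaling $\e'(a_i) = \frac{d_{r(a_i)}}{d_{c(a_i)}}\e_1(a_i)$, $\e'(t_i) = \e_1(t_i)$, which is an augmentation by Lemma~\ref{lem:rescale-epsilon}. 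Then apply Lemma~\ref{lem:construct-K} with base augmentation $\e'$ and chord-coefficients $k_j$ (suitably normalized; see below) to obtain $\e_2$ together with an $(\e',\e_2)$-derivation $K$ satisfying $\e' - \e_2 = K\circ\partial$. This $\e_2$ is the claimed augmentation.

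Next I would verify that $\alpha$ is indeed a closed isomorphism in $\Hom^0(\e_1,\e_2)$. By the discussion preceding the Proposition (the three-copy computation of \cite[Proposition~4.14]{NRSSZ}, together with the $m_2$ formulas $m_2(y_i^+,a_j^+) = -\delta_{i,c(a_j)}a_j^+$ and $m_2(a_j^+,y_i^+) = -\delta_{i,r(a_j)}a_j^+$), the condition $d_i \in \coeffs^\times$ already guarantees that $\alpha$ admits one-sided inverses $\beta,\gamma$, so $\alpha$ is invertible; what remains is to identify when $m_1(\alpha) = 0$ and to check that the target of the ``closed'' version of $\alpha$ is exactly this $\e_2$. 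For the pure-rescaling part, the analogue of Proposition~\ref{prop:nrssz-homotopy} for links (with the link-graded $m_1$) should say that $m_1$ applied to $\sum_i d_i y_i^+$ records exactly the failure of $\e_1$ and $\e'$ to agree, which vanishes precisely because $\e'$ was built by Lemma~\ref{lem:rescale-epsilon}; for the chord part, $m_1\big(\sum_j k_j a_j^+\big) = 0$ iff the associated derivation is a \dga{} homotopy $\e' \to \e_2$, which is exactly the output of Lemma~\ref{lem:construct-K}. Combining, $m_1(\alpha) = 0$ and $\alpha$ is a closed invertible degree-zero morphism $\e_1 \to \e_2$, i.e.\ an isomorphism.

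Finally I would prove uniqueness of $\e_2$. Suppose $\alpha$ is an isomorphism in both $\Hom^0(\e_1,\e_2)$ and $\Hom^0(\e_1,\e_2')$. Composing with an inverse, $e_{\e_1}$ ``transports'' to an isomorphism $\e_2 \to \e_2'$ of a very restricted form, and one can pin down $\e_2 = \e_2'$ by running the same height induction as in Lemma~\ref{lem:construct-K}: the values $\e_2(t_i)$ are forced (they equal $\e_1(t_i)$, since the $t_i$ are invertible and the link grading forces the coefficient of each $y_i^+$ to rescale them consistently), and then $\e_2(a_i)$ is determined inductively by the closedness equation $m_1(\alpha) = 0$ read off on $a_i^+$, which expresses $\e_2(a_i)$ in terms of $\e_1$, the $d$'s, the $k$'s, and lower-height data. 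The main obstacle I anticipate is bookkeeping the link grading carefully throughout: one must track the factors $d_{r(\cdot)}/d_{c(\cdot)}$ through the non-commutative words $w(u)$ appearing in $\partial$ and through the $A_\infty$ products, and confirm that the normalization of the $k_j$ fed into Lemma~\ref{lem:construct-K} (possibly $k_j$ rescaled by $d_{r(a_j)}/d_{c(a_j)}$ or its inverse, depending on sign conventions) is the one that makes $m_1(\alpha)=0$ come out exactly right. Once the conventions are fixed, each individual verification is a routine induction on Reeb-chord height of the kind already carried out in Lemmas~\ref{lem:construct-K} and~\ref{lem:rescale-epsilon}.
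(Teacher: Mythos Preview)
Your overall plan---reduce to the two lemmas, with a rescaling handling the $d_i$ and a \dga{} homotopy handling the $k_j$---is exactly the paper's, but your execution has a genuine gap.  You try to verify $m_1(\alpha)=0$ by treating $\sum_i d_i y_i^+$ as a closed element of $\Hom(\e_1,\e')$ and $\sum_j k_j a_j^+$ as encoding a homotopy $\e'\to\e_2$.  But $m_1$ on $\alpha$ is the differential in $\Hom(\e_1,\e_2)$, whose structure constants use $\e_1$ on the left and $\e_2$ on the right of every word in $\partial$; neither summand is separately closed there, and closedness in two different Hom spaces does not assemble into closedness in a third.  A concrete symptom: your rescaling goes the wrong way.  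Setting all $k_j=0$, the $a_j^+$-coefficient of $m_1\bigl(\sum_i d_i y_i^+\bigr)$ in $\Hom(\e_1,\e_2)$ is $d_{r(a_j)}\e_2(a_j) - d_{c(a_j)}\e_1(a_j)$, which forces $\e_2(a_j) = \tfrac{d_{c(a_j)}}{d_{r(a_j)}}\e_1(a_j)$, the inverse of the $\e'$ you wrote down.

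The paper avoids this by computing $m_1(\alpha)$ directly in $\Hom(\e_1,\e_2)$ for an \emph{arbitrary} candidate $\e_2$ and showing its $a_j^+$-coefficient equals $d_{c(a_j)}\bigl(\e'(a_j) - \e_1(a_j) + K(\partial a_j)\bigr)$, where $\e'$ is the rescaling of $\e_2$ (not of $\e_1$) and $K$ is the $(\e_1,\e')$-derivation with $K(a_j)=k_j/d_{c(a_j)}$.  Thus $m_1(\alpha)=0$ becomes exactly the equation $\e_1-\e' = K\circ\partial$, which Lemma~\ref{lem:construct-K} says has a unique solution $\e'$, hence a unique $\e_2$.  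So both lemmas do appear, but with the rescaling applied to $\e_2$ rather than $\e_1$ and in the opposite order from yours; and the key step is one explicit $m_1$ computation rather than a decomposition across Hom spaces.  Your uniqueness sketch (read $m_1(\alpha)=0$ generator-by-generator and induct on height) is correct and is essentially what the paper does via Lemma~\ref{lem:construct-K}.
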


\begin{proof}
Given an augmentation $\e_2$, we must determine whether $m_1(\alpha)=0$ in $\Hom(\e_1,\e_2)$, and if this is the case then it will follow from the above discussion that $\alpha$ is an isomorphism.  In what follows, we will let $\e': \cA \to \coeffs$ denote the algebra homomorphism defined on generators by $\e'(a_j) = \frac{d_{r(a_j)}}{d_{c(a_j)}} \e_2(a_j)$ and $\e'(t_i) = \e_2(t_i)$; this is an augmentation by Lemma~\ref{lem:rescale-epsilon}.  We will also let $K$ denote the unique $(\e_1,\e')$-derivation defined on generators by $K(a_j) = \frac{k_j}{d_{c(a_j)}}$.

The computation of $m_1(\alpha)$ involves essentially the same computations as in \cite[Proposition~5.16]{NRSSZ}.  We compute $m_1$ according to \cite[Proposition~4.14]{NRSSZ} and \cite[Definition~3.16]{NRSSZ}:
\begin{align*}
m_1(y_i^+) &= \left(\e_1(t_i)^{-1}\e_2(t_i) - 1\right)x_i^+ + \sum_{r(a_j)=i} \e_2(a_j)\cdot a_j^+ - \sum_{c(a_j)=i} (-1)^{|a_j|}\e_1(a_j)\cdot a_j^+, \\
m_1(a_j^+) &= \sum_{\substack{a_k,b_1,\dots,b_m\\u \in \Delta(a_k;b_1,\dots,b_m)}} \sum_{\substack{1 \leq l \leq n \\ b_l=a_j}} \sigma_u \e_1(b_1\dots b_{l-1})\e_2(b_{l+1}\dots b_m) a_k^+,
\end{align*}
where $\Delta(a_j;b_1,\dots,b_m)$ denotes the moduli space of disks which are counted in $\partial a_j$ (i.e.\ rigid holomorphic disks with positive end at $a_j$ and negative ends and base points at $b_1,\dots,b_m$) and $\sigma_u$ is the sign associated to the disk $u$.  We also note that $(-1)^{|a_j|}\e_1(a_j) = \e_1(a_j)$, since $\e(a_j)=0$ unless $|a_j|=0$.  Combining these formulas, we have
\begin{align*}
m_1(\alpha) &= \sum_{i=1}^\ell (\e_1(t_i)^{-1}\e_2(t_i)-1)x_i^+ +
\sum_{j=1}^r \left( d_{r(a_j)}\e_2(a_j) - d_{c(a_j)}\e_1(a_j) \right) a_j^+ \\
&\qquad + \sum_{j=1}^r \left(\sum_{u \in \Delta(a_j;b_1,\dots,b_m)} \sum_{\substack{1\leq l \leq m\\b_l \neq t_i^{\pm 1}}} \sigma_u\e_1(b_1\dots b_{l-1})k_{b_l} \e_2(b_{l+1}\dots b_m) \right) a_j^+.
\end{align*}
In the last sum, we use $k_{b_l}$ to denote the coefficient $k_i$ such that $a_i = b_l$.

The above formula shows that we cannot have $m_1(\alpha)=0$ unless $\e_1(t_i) = \e_2(t_i)$ for all $i$.  In order to determine when the coefficient of $a_j^+$ vanishes, we examine the individual terms in the sum over disks $u$ and observe that
\begin{align*}
k_{b_l} \e_2(b_{l+1}\dots b_m) &= \frac{k_{b_l}}{d_{c(b_l)}} \cdot \left(\frac{d_{r(b_{l+1})}}{d_{c(b_{l+1})}}\e_2(b_{l+1})\right) \cdot \ldots \cdot \left(\frac{d_{r(b_m)}}{d_{c(b_m)}}\e_2(b_m)\right) \cdot d_{c(a_j)} \\
&= K(b_l)\e'(b_{l+1} \dots b_m) \cdot d_{c(a_j)},
\end{align*}
since as in the proof of Lemma~\ref{lem:rescale-epsilon} we have $c(b_i)=r(b_{i+1})$ for all $i<m$ and $c(b_m)=c(a_j)$.  Thus $m_1(\alpha)$ has $a_j^+$-coefficient equal to
\[ d_{r(a_j)}\e_2(a_j) - d_{c(a_j)}\e_1(a_j) + \sum_{u,l} \sigma(u)\e_1(b_1\dots b_{l-1}) K(b_l) \e'(b_{l+1}\dots b_m) \cdot d_{c(a_j)}, \]
which is precisely $d_{c(a_j)} \left( \e'(a_j) - \e_1(a_j) + K(\partial a_j)\right)$.  We conclude that $m_1(\alpha)=0$ in $\Hom(\e_1,\e_2)$ exactly when $\e_1 - \e' = K\circ \partial$, since this must be satisfied for all generators $a_j$ and $K$ is an $(\e_1,\e')$-derivation.  But Lemma~\ref{lem:construct-K} says that $\e_1$ and the values $\frac{k_j}{d_{c(a_j)}}$ uniquely determine $K$ and $\e'$, and in turn $\e'$ uniquely determines $\e_2$, so it follows that there is exactly one choice of $\e_2$ such that $\alpha$ is a cocycle in $\Hom(\e_1,\e_2)$.
\end{proof}

\begin{corollary}  \label{cor:count1} For any $\e \in \Aug(\Lambda; \F_q)$, the set
\[ \bigsqcup_{\e' \in \Aug(\Lambda)}  \left\{ \alpha \in \Hom^0(\e,\e') \mid m_1(\alpha) = 0, \mbox{ and } [\alpha] \mbox{ is an isomorphism in $H^0(\Hom(\e,\e'))$}\right\} \]
has cardinality $(q-1)^\ell q^{r'}$, where $\ell$ denotes the number of components of $\Lambda$ and $r'$ is the number of Reeb chords of $\Lambda$ with \dga{} degree $-1$.
\end{corollary}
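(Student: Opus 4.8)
The plan is to exhibit an explicit bijection between the set in the statement --- call it $S_\e$ --- and $(\coeffs^\times)^\ell \times \coeffs^{r'}$, which has cardinality $(q-1)^\ell q^{r'}$; the essential input is Proposition~\ref{prop:unique-isomorphism} together with the remarks immediately preceding it.

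First I would record the parametrization of degree-zero morphisms: for any $\e'$, a degree-zero element of $\Hom(\e,\e')$ is exactly a linear combination $\alpha = \sum_{i=1}^\ell d_i y_i^+ + \sum_j k_j a_j^+$ in which the sum over $j$ runs over the Reeb chords with $|a_j^+| = |a_j|+1 = 0$, i.e.\ with \dga{} degree $-1$ --- of which there are $r'$ --- so that $\Hom^0(\e,\e')$ is parametrized by $\big((d_i)_i,(k_j)_j\big) \in \coeffs^\ell \times \coeffs^{r'}$. Next I would check that if $\alpha \in \Hom^0(\e,\e')$ has $m_1(\alpha)=0$ and $[\alpha]$ is an isomorphism in $H^0$, then every $d_i \in \coeffs^\times$: a cohomological inverse $[\beta]$ gives $m_2(\beta,\alpha) = e_\e + m_1(\delta)$ for some $\delta \in \Hom^{-1}(\e,\e)$, and since the displayed formula for $m_1$ shows that $m_1$ of a degree-$(-1)$ element involves only Reeb-chord generators (no $y_i^+$), comparing $y_i^+$-coefficients --- that of $m_2(\beta,\alpha)$ being $\pm d_i^\beta d_i^\alpha$ and that of $e_\e$ being $-1$ --- forces $d_i^\alpha$ to be a unit. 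Conversely, the remarks preceding Proposition~\ref{prop:unique-isomorphism} state that if the $d_i$ are units and $m_1(\alpha)=0$ then $\alpha$ is invertible, hence $[\alpha]$ is an isomorphism in $H^0$. Thus $S_\e$ is exactly the set of pairs $(\e',\alpha)$ with $\alpha \in \Hom^0(\e,\e')$, $m_1(\alpha)=0$, and all $d_i$ units.

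The bijection is then immediate. In one direction, send $(\e',\alpha) \in S_\e$ to the tuple $\big((d_i)_i,(k_j)_j\big) \in (\coeffs^\times)^\ell \times \coeffs^{r'}$ of coefficients of $\alpha$. In the other direction, given such a tuple, form $\alpha = \sum d_i y_i^+ + \sum k_j a_j^+$ and let $\e'$ be the augmentation produced by Proposition~\ref{prop:unique-isomorphism}, for which $\alpha$ is an isomorphism in $\Hom^0(\e,\e')$; by that proposition $m_1(\alpha)=0$, so $(\e',\alpha) \in S_\e$. The first composite is the identity because $\alpha$ plainly recovers its own coefficients, and the second composite is the identity because, given $(\e',\alpha) \in S_\e$, the target $\e'$ is an augmentation for which $\alpha$ is an isomorphism and hence, by the uniqueness clause of Proposition~\ref{prop:unique-isomorphism}, must be the one produced by that proposition. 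Counting the tuples yields $|S_\e| = (q-1)^\ell q^{r'}$.

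I expect the one point needing genuine care to be the claim that a merely cohomological isomorphism already has unit $y_i^+$-coefficients --- that is, the degree and coefficient bookkeeping showing that $m_1$ contributes no $y_i^+$ term in the relevant degree. Once that is in place, everything else reduces to the parametrization of $\Hom^0$ and a direct appeal to Proposition~\ref{prop:unique-isomorphism}.
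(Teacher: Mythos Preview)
Your proposal is correct and follows essentially the same approach as the paper's proof: both parametrize degree-zero elements by their $y_i^+$- and $a_j^+$-coefficients, invoke the discussion preceding Proposition~\ref{prop:unique-isomorphism} to identify the invertible ones as those with unit $y_i^+$-coefficients, and then appeal to Proposition~\ref{prop:unique-isomorphism} for the bijection with $(\F_q^\times)^\ell \times \F_q^{r'}$. You are somewhat more explicit than the paper on the point you flagged --- that a \emph{cohomological} isomorphism (not just a strict $m_2$-inverse) forces unit $y_i^+$-coefficients --- and your justification via the absence of $y_i^+$-terms in $m_1$ of a degree-$(-1)$ element is the right one.
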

\begin{proof}
Any element of $\Hom^0(\e,\e')$ which represents an invertible class in cohomology must have the form $\sum d_i y_i^+ + \beta$ with $d_1,\dots,d_\ell \in \F_q^\times$ and $\beta \in \F_q\{ a_1^+, \ldots, a_{r'}^+\}$, where the $a_i$ are the Reeb chords of degree $-1$.  Thus, for any of the $(q-1)^\ell$ choices for $d_1,\dots,d_\ell$ we have $q^{r'}$ isomorphisms from $\e$ to some $\e'$ of the form $\alpha = \sum d_i y_i^+ + \beta$, with $\e'$ uniquely determined by $\alpha$.
\end{proof}

Suppose that the Reeb chords of $\Lambda$ are sorted by height, $h(a_1) < h(a_2) < \dots < h(a_r)$, and for any $\e_1,\e_2 \in \Aug(\Lambda;\coeffs)$, consider the descending filtration of
\[ \Hom(\e_1,\e_2) = \Span_{\coeffs}\{y_1^+, \ldots, y_\ell^+, x_1^+, \ldots, x_\ell^+, a_1^+, \ldots, a_r^+ \} \]
 given by
\begin{align*}
F^{-1}\Hom(\e_1,\e_2) &= \Hom(\e_1,\e_2) \\
F^{0}\Hom(\e_1,\e_2) &= \Span_{\coeffs}\{x_1^+,\ldots, x_\ell^+, a_1^+, \ldots, a_r^+\} \\
F^{i}\Hom(\e_1,\e_2) &= \Span_{\coeffs}\{ a_i^+, a_{i+1}^+, \dots, a_r^+ \},  \quad \mbox{for $1 \leq i \leq r$}.
\end{align*}
We claim that the $A_\infty$ operations on $\Aug(\Lambda;\coeffs)$ respect this filtration in the following precise sense.
\begin{proposition}  \label{prop:mktriangular}
For any $k \geq 1$, any augmentations $\e_1, \ldots, \e_{k+1}$, and any integers $-1 \leq i_1, \ldots, i_k \leq r$, we have
\[
m_k\left( F^{i_k} \Hom(\e_k,\e_{k+1}) \otimes \cdots \otimes F^{i_1} \Hom(\e_1,\e_2)\right) \subset F^{I}\Hom(\e_1,\e_{k+1})
\]
where $I = \begin{cases} \max\{i_1, \ldots, i_k\} +1, & k \neq 2,\\ \max\{i_1, i_2\}, & k=2.\end{cases}$
\end{proposition}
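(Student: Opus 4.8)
The plan is to reduce to basis generators and then combine the length (height) estimate for holomorphic disks with a short case analysis of the ``Morse-type'' terms. Since $m_k$ is multilinear, it suffices to fix basis generators $g_t\in F^{i_t}\Hom(\e_t,\e_{t+1})$ for $1\le t\le k$, each of the form $y_i^+$, $x_i^+$, or $a_j^+$, and to check that every generator appearing in $m_k(g_k\otimes\cdots\otimes g_1)$ lies in $F^{I}\Hom(\e_1,\e_{k+1})$. Recall from \cite[Definition~3.16]{NRSSZ} and \cite[Proposition~4.14]{NRSSZ} that $m_k$ is read off from the $(k{+}1)$-copy \dga{}: apart from the strict-unit terms---which occur only in $m_2$, since $e_\e=-\sum_i y_i^+$ is a strict unit and $m_k(\cdots,e_\e,\cdots)=0$ for $k\ne 2$---each term of $m_k(g_k\otimes\cdots\otimes g_1)$ comes from a rigid disk $u$ of the $(k{+}1)$-copy whose positive corner is the output generator $g_0$, whose negative corners include $g_1,\dots,g_k$ in order, and whose remaining negative corners are fed to $\e_1,\dots,\e_{k+1}$. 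As for any Chekanov--Eliashberg differential, Stokes's theorem gives the strict inequality $h(g_0)>\sum(\text{heights of negative corners of }u)\ge h(g_1)+\cdots+h(g_k)$. I would fix the Lagrangian-projection perturbation so that the short Reeb chords producing $y_1^+,\dots,y_\ell^+$ and $x_1^+,\dots,x_\ell^+$ have positive heights that are mutually within a factor of $2$ and all smaller than $h(a_1)$; then the height order $h(y_i)<h(x_j)<h(a_1)<\cdots<h(a_r)$ matches the displayed filtration, in the sense that $g\in F^i$ forces $h(g)\ge h(a_i)$ for $i\ge 1$, $h(g)\ge h(x_1)$ for $i=0$, and $h(g)\ge h(y_1)$ for $i=-1$.

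For a genuine-disk term, let $t_0$ attain $\max_t i_t$. If $i_{t_0}\ge 1$, then $h(g_0)>h(g_{t_0})\ge h(a_{i_{t_0}})$, so $g_0$ is a Reeb-chord generator $a_m^+$ with $m>i_{t_0}$, hence $g_0\in F^{i_{t_0}+1}=F^{\max_t i_t+1}\subseteq F^I$. If instead $\max_t i_t\le 0$, then $I\le 1$, and the only possibility to exclude is $g_0$ being a $y$-type generator: such a $g_0$ satisfies $h(g_0)\le h(y_\ell)$, while $h(g_0)>h(g_1)+\cdots+h(g_k)\ge k\,h(y_1)$, which for $k\ge 2$ contradicts $h(y_\ell)<2\,h(y_1)\le k\,h(y_1)$; for $k=1$ the explicit formulas recalled above show directly that $m_1(y_i^+)$ and $m_1(a_j^+)$ have no $y$-type output, while $m_1(x_i^+)$ lies in $\Span_\coeffs\{a_m^+ : |a_m|=1\}=F^1$ by a height (or degree) count. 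Thus $g_0$ is $x$-type ($g_0\in F^0\subseteq F^I$) or $a$-type ($g_0\in F^1\subseteq F^I$), which finishes all genuine-disk terms; note this already proves the $k\ne 2$ case completely, and for $k=2$ it gives the stronger $g_0\in F^{\max\{i_1,i_2\}+1}$.

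Finally, the strict-unit terms occur only when $k=2$ and one of $g_1,g_2$ lies in the span of the $y_i^+$. I would handle these using the low-energy products recalled just before the statement, $m_2(y_i^+,a_j^+)=-\delta_{i,c(a_j)}a_j^+$ and $m_2(a_j^+,y_i^+)=-\delta_{i,r(a_j)}a_j^+$, together with the analogous values of $m_2(y_i^+,x_j^+)$, $m_2(x_j^+,y_i^+)$, and $m_2(y_i^+,y_j^+)$ obtained from \cite[Proposition~4.14]{NRSSZ} and from strict unitality of $e_\e=-\sum_i y_i^+$: each such term is $\pm$ the non-$y$ input, or zero, so it lies in $F^{\max\{i_1,i_2\}}$, since the $y$-type input contributes $-1$ to the max and $\max\{-1,i'\}=i'$. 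This is exactly why the exceptional value $I=\max\{i_1,i_2\}$, rather than $\max\{i_1,i_2\}+1$, is forced in degree $k=2$. The one delicate point in the argument is this $k=2$ bookkeeping, together with arranging the perturbation so that its short Reeb chords sit correctly in the filtration; once that is pinned down, everything else reduces to the area estimate of Stokes's theorem applied to the $(k{+}1)$-copy \dga{}.
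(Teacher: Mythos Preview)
Your overall strategy---reduce to generators and use the Stokes/height inequality on disks in the $(k{+}1)$-copy---is the same as the paper's. The paper also translates the problem into the height filtration on $\cA(\Lambda^{(k+1)})^\e$, which $\partial^\e$ preserves; this immediately gives $m_k(F^{i_k}\otimes\cdots\otimes F^{i_1})\subset F^{\max\{i_t\}}$. The difference is in how the extra $+1$ is obtained for $k\ne 2$: the paper simply inspects the explicit formulas $\partial Y_l=Y_l^2$, $\partial X_l=\Delta_l^{-1}Y_l\Delta_l X_l - X_lY_l$, $\partial A_k=Y_{r(k)}A_k-(-1)^{|a_k|}A_kY_{c(k)}+\cdots$ from \cite{NRSSZ} and observes that all top-filtration-level terms are quadratic, hence contribute only to $m_2$.

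Your attempt to replace this inspection by abstract height estimates has two genuine gaps. First, strict unitality of $e_\e=-\sum_i y_i^+$ only gives $\sum_i m_k(\ldots,y_i^+,\ldots)=0$ for $k\ne 2$, not the vanishing of each summand; for a multi-component link you cannot conclude from unitality alone that individual $y_i^+$ inputs contribute only to $m_2$. Second, and more concretely, your sentence ``the only possibility to exclude is $g_0$ being a $y$-type generator'' is wrong when $\max_t i_t=0$ and $k\ne 2$: then $I=1$, so you must also exclude $x$-type output, and your concluding inclusion ``$g_0\in F^0\subseteq F^I$'' fails since $F^0\not\subseteq F^1$. Your height bound $h(g_0)>k\,h(y_1)$ does not rule out $x$-type output here, and the ``within a factor of $2$'' assumption on short-chord heights is delicate in the $(k{+}1)$-copy since the output chord lies between copies $1$ and $k{+}1$ while the inputs lie between adjacent copies. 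In fact these problematic cases are vacuous---the explicit formulas above show that $x$-type and $y$-type generators appear as inputs only in quadratic terms, so $m_k$ with any such input vanishes for $k\ne 2$---but establishing that vacuity requires exactly the inspection of \cite[Proposition~3.25]{NRSSZ} that the paper performs and that your argument was trying to avoid.
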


\begin{proof}
We review relevant aspects of the definition of $m_k$;  see \cite[Section~3]{NRSSZ}.

The operations $m_k$ are constructed from the Chekanov--Eliashberg algebras of $\Lambda$ and of the \emph{Lagrangian projection $(k+1)$-copy} $\Lambda^{(k+1)}$ (see \cite[Section~4.2.2]{NRSSZ}) of $\Lambda$.
We consider the pure augmentation
\[ \e = (\e_1, \ldots, \e_{k+1}) : (\mathcal{A}(\Lambda^{(k+1)}), \partial) \rightarrow (\coeffs,0), \]
which produces a twisted differential $\partial^\e$ on $\mathcal{A}(\Lambda^{(k+1)})^\e$, where $\mathcal{A}(\Lambda^{(k+1)})^\e$ is now an algebra over $\coeffs$ freely generated by Reeb chords of $\Lambda^{(k+1)}$.  The Reeb chords of $\Lambda^{(k+1)}$ are enumerated as follows: each Reeb chord $a_l$ of $\Lambda$ produces Reeb chords $a^{ij}_{l}$ of $\Lambda^{(k+1)}$, where $1 \leq i,j \leq k+1$, and the remaining Reeb chords of $\Lambda^{(k+1)}$ have the form $x_l^{ij}$ and $y_l^{ij}$ for $1 \leq i < j \leq k+1$ and $1 \leq l \leq \ell$.  Moreover, we note that the construction of $\Lambda^{(k+1)}$ can be carried out so that we have height inequalities
\begin{equation}  \label{eq:k1height}
h(y_{l_1}) < h(x_{l_2}) < h(a_1) < \ldots < h(a_r)
\end{equation}
for all $1 \leq l_1,l_2 \leq \ell$ and for any choice of superscripts (omitted from notation).  There is a corresponding increasing filtration of $\alg(\Lambda^{(k+1)})^\e$,
\begin{equation}  \label{eq:incfilt}
F_{-1} \alg(\Lambda^{(k+1)})^\e \subset F_0 \alg(\Lambda^{(k+1)})^\e \subset F_1 \alg(\Lambda^{(k+1)})^\e \subset \cdots \subset F_r \alg(\Lambda^{(k+1)})^\e,
\end{equation}
where $F_{-1} \alg(\Lambda^{(k+1)})^\e$ is the sub-algebra generated by elements of the form $y^{ij}_l$; we add all generators of the form $x^{ij}_l$ to obtain $F_0\alg(\Lambda^{(k+1)})^\e$, and then to obtain $F_{p}\alg(\Lambda^{(k+1)})^\e$ from  $F_{p-1}\alg(\Lambda^{(k+1)})^\e$, for $1 \leq p \leq r$, we add all generators of the form $a_{p}^{ij}$.  As discussed above, the inequalities \eqref{eq:k1height} give $\partial^\e(F_{p}\alg(\Lambda^{(k+1)})^\e) \subset F_{p}\alg(\Lambda^{(k+1)})^\e$ for all $p\geq -1$.

Suppose we wish to determine a composition of the form $m_k(s_k^+,\dots,s_1^+)$, where each $s^+_i \in \Hom(\e_i,\e_{i+1})$ is a generator of the form $y_l^+$, $x_l^+$, or $a_l^+$ for some $l$.  There are corresponding Reeb chord generators $s^{12}_1, s^{23}_2, \ldots, s^{k,k+1}_k$ in the Chekanov--Eliashberg algebra $(\mathcal{A}^{(k+1)},\partial)$ of the $(k+1)$-copy $\Lambda^{(k+1)}$, and we have
\begin{align*}
m_k(s^+_k, \ldots, s^+_1) &= (-1)^\sigma \left( \sum_{i=1}^\ell y_i^+ \cdot \Coeff_{s^{12}_1\cdots s^{k,k+1}_k}( \partial^\e y^{1,k+1}_{i} ) +
 \sum_{i=1}^\ell x_i^+ \cdot \Coeff_{s^{12}_1\cdots s^{k,k+1}_k}( \partial^\e x^{1,k+1}_{i} ) \right.
\\
&\qquad\qquad \left. + \sum_{i=1}^r a_i^+ \cdot \Coeff_{s^{12}_1\cdots s^{k,k+1}_k}( \partial^\e a^{1,k+1}_{i} ) \right)
\end{align*}
for an appropriate sign $(-1)^\sigma$.
Showing that $\partial^\e$ preserves the filtration \eqref{eq:incfilt} translates immediately to showing that
\begin{equation} \label{eq:filtered-composition}
m_k \left( F^{i_k} \Hom(\e_k,\e_{k+1}) \otimes \cdots \otimes F^{i_1} \Hom(\e_1,\e_2)\right) \subset F^{J}\Hom(\e_1,\e_{k+1})
\end{equation}
with $J = \max\{i_1, \ldots, i_k\}$.

To improve this result when $k \neq 2$, given a generator $s^{1,k+1} \in F_{p}\alg(\Lambda^{(k+1)})^\e$, we need to examine those terms in $\partial^\e s^{1,k+1}$ which belong to $F_{p}\alg(\Lambda^{(k+1)})^\e \setminus F_{p-1}\alg(\Lambda^{(k+1)})^\e$.  An explicit computation of $(\alg(\Lambda^{(k+1)}), \partial)$ in terms of $(\alg(\Lambda),\partial)$ is given in \cite[Proposition~3.25]{NRSSZ}.  Examining it, we see that the only relevant terms are
\begin{align*}
\partial Y_l &= (Y_l)^2 \\
\partial X_l &= \Delta_l^{-1} Y_l \Delta_lX_l - X_l Y_l \\
\partial A_k &= Y_{r(k)}A_k - (-1)^{|a_k|} A_k Y_{c(k)} + \dots,
\end{align*}
where generators $y_l^{ij}$, $x_l^{ij}$, and $a_k^{ij}$ are placed into matrices $Y_l, X_l$ and $A_k$; $r(k)$ and $c(k)$ denote the components of the upper and lower endpoint of the Reeb chord $a_k$ of $\Lambda$; and $\partial$ is applied to matrices entry-by-entry.  (The $\Delta_l$ matrices are diagonal with entries corresponding to the invertible generators $t_l^{i}$, which are associated to base points.)  In passing from $(\alg(\Lambda^{(k+1)}),\partial)$ to $(\alg(\Lambda^{(k+1)})^\e,\partial^\e)$, the right hand sides of the above formulas are adjusted by replacing the $t_l^{i}$ with  $\e(t_l^i) = \epsilon_i(t_l)$ and replacing all Reeb chord generators $s$ with $s + \e(s)$.
Since $\e$ is pure, all $y^{ij}_l$ satisfy $\e(y^{ij}_l) =0$, so for a generator $s^{1,k+1} \in F_{p}\alg(\Lambda^{(k+1)})^\e \setminus F_{p-1}\alg(\Lambda^{(k+1)})^\e$, the generators in $F_{p}\alg(\Lambda^{(k+1)})^\e \setminus F_{p-1}\alg(\Lambda^{(k+1)})^\e$ which appear in $\partial^\e s^{1,k+1}$ must appear only in quadratic terms.  Thus when $k \neq 2$ we can improve \eqref{eq:filtered-composition} to obtain $J = \max\{i_1,\dots,i_k\} + 1$, as desired.
\end{proof}

Given objects $\e,\e'$ in $\Aug(\Lambda;\coeffs)$, we denote by $\Hom^0(\e,\e')^\times$ the set of all elements $\alpha \in \Hom^0(\e,\e')$ such that $m_1(\alpha)=0$ and $\alpha$ has both right and left inverses with respect to the $m_2$ operations.  As we have seen, the latter condition is equivalent to the coefficients of $y_i^+$ in $\alpha$ belonging to $\coeffs^\times$ for $1 \leq i \leq \ell$, and given $m_1(\alpha)=0$ it implies that the left and right inverses of $\alpha$ are also cocycles.  Thus $\Hom^0(\e,\e')^\times$ is nonempty if and only if $\e\cong\e'$.

The following corollary of Proposition~\ref{prop:mktriangular} would easily be shown to hold in an honest category, but it requires extra care since $\Aug(\Lambda;\coeffs)$ is an $A_\infty$ category and composition is not associative.

\begin{proposition} \label{prop:iso-bijection}
If $\e$ and $\e'$ are isomorphic objects of $\Aug(\Lambda;\coeffs)$, then there is a bijection $\Hom^0(\e,\e)^\times \cong \Hom^0(\e,\e')^\times$.
\end{proposition}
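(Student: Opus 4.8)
The plan is to fix an isomorphism $\phi \in \Hom^0(\e,\e')^\times$ --- one exists precisely because $\e \cong \e'$ --- and to prove that left-composition with $\phi$,
\[ T \colon \Hom^0(\e,\e)^\times \longrightarrow \Hom^0(\e,\e')^\times, \qquad T(\alpha) = m_2(\phi,\alpha), \]
is the desired bijection. First I would verify that $T$ is well-defined. The $A_\infty$ relation $m_1 m_2(\phi,\alpha) = \pm m_2(m_1\phi,\alpha) \pm m_2(\phi, m_1\alpha)$ together with $m_1\phi = m_1\alpha = 0$ gives $m_1 T(\alpha) = 0$; and the formulas for $m_2$ on $\Hom^0$ recalled before Proposition~\ref{prop:unique-isomorphism} (in particular $m_2(y_i^+,y_j^+) = -\delta_{ij}y_i^+$, while $m_2(y_i^+,a_j^+)$ and $m_2(a_j^+,y_i^+)$ lie in $\Span_{\coeffs}\{a_k^+\}$) show that the coefficient of $y_i^+$ in $T(\alpha)$ is $-(\text{coeff.\ of }y_i^+\text{ in }\phi)\cdot(\text{coeff.\ of }y_i^+\text{ in }\alpha) \in \coeffs^\times$. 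Hence $T(\alpha) \in \Hom^0(\e,\e')^\times$. By the discussion preceding Proposition~\ref{prop:unique-isomorphism}, $\phi$ admits a left inverse $\phi_L$ and a right inverse $\phi_R$, both of which are cocycles lying in $\Hom^0(\e',\e)^\times$; set $S_L(\psi) = m_2(\phi_L,\psi)$ and $S_R(\psi) = m_2(\phi_R,\psi)$, which are well-defined maps $\Hom^0(\e,\e')^\times \to \Hom^0(\e,\e)^\times$ by the same reasoning.

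Next I would compute the two composites, which would be the identity if $\Aug(\Lambda;\coeffs)$ were an honest category. Applying the three-input $A_\infty$ relation to $(\phi_L,\phi,\alpha)$ and using $m_1\phi_L = m_1\phi = m_1\alpha = 0$, the strict unit identity $m_2(e_\e,-) = \mathrm{id}$, and $m_2(\phi_L,\phi) = e_\e$, one obtains
\[ S_L\circ T(\alpha) = m_2(\phi_L, m_2(\phi,\alpha)) = \alpha + N(\alpha), \qquad N(\alpha) := \pm\, m_1 m_3(\phi_L,\phi,\alpha), \]
with $N$ linear in $\alpha$; symmetrically $T\circ S_R(\psi) = \psi + N'(\psi)$ with $N'(\psi) = \pm\, m_1 m_3(\phi,\phi_R,\psi)$. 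The crucial point is that $N$ is nilpotent. Indeed, $\phi_L$ and $\phi$ lie in $F^{-1}$, and both $m_3$ and $m_1$ raise the height-filtration index by one (the $k\neq 2$ cases of Proposition~\ref{prop:mktriangular}), so if $\alpha \in F^{p}\Hom(\e,\e)$ then $m_3(\phi_L,\phi,\alpha)\in F^{p+1}$ and $N(\alpha)\in F^{p+2}$; since the filtration $F^{-1}\supset F^{0}\supset\cdots\supset F^{r}\supset F^{r+1}=0$ is finite, iterating shows $N^{k}=0$ on $\Hom^0(\e,\e)$ for $k$ large. In particular $N$ maps $\Hom^0(\e,\e)$ into $F^{1}\Hom(\e,\e)\cap\Hom^0 = \Span_{\coeffs}\{a_j^+ : |a_j| = -1\}$, the span of the degree-zero Reeb-chord generators, which is moreover $N$-invariant; the same statements hold for $N'$.

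It then follows that $\mathrm{id} + N$ is a linear automorphism of $\Hom^0(\e,\e)$, with inverse $\sum_{k\ge0}(-N)^{k}$, and since both $\mathrm{id}+N$ and its inverse have the form $\mathrm{id} + (\text{a map into }\Span_{\coeffs}\{a_j^+:|a_j|=-1\})$, they do not change the $y_i^+$-coefficients of an element; hence $\mathrm{id}+N$ restricts to a bijection of the subset $\Hom^0(\e,\e)^\times$ cut out by the condition that these coefficients lie in $\coeffs^\times$. Thus $S_L\circ T$ is a bijection of $\Hom^0(\e,\e)^\times$, so $T$ is injective; the identical argument applied to $T\circ S_R = \mathrm{id}+N'$ shows it is a bijection of $\Hom^0(\e,\e')^\times$, so $T$ is surjective, and therefore $T$ is a bijection. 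The main obstacle --- and the reason the statement needs proof at all --- is exactly the failure of associativity of $m_2$: one must extract the correction term $\pm m_1 m_3(\phi_L,\phi,\alpha)$ from the $A_\infty$ relations and then tame it, which is precisely the role of the filtered structure of Proposition~\ref{prop:mktriangular} together with its improvement in the $k\neq 2$ case. A minor but genuine point of care is that at the cochain level $\phi$ has only one-sided inverses, so one really does need both $\phi_L$ and $\phi_R$: the left inverse for injectivity and the right inverse for surjectivity.
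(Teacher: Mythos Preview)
Your proof is correct and follows essentially the same strategy as the paper: fix an isomorphism, show that left-composition $T = m_2(\phi,-)$ is the desired bijection by computing $S_L \circ T$ and $T \circ S_R$ via the $A_\infty$ relation and controlling the correction $\pm m_1 m_3(\ldots)$ with the filtration of Proposition~\ref{prop:mktriangular}. The paper's write-up differs only cosmetically---it uses a single left inverse $g$ of $f$ and concludes from having injections $M_f$, $M_g$ in both directions, rather than your separate left/right inverses---and one small point you leave implicit (that $\mathrm{id}+N$ and its inverse preserve the cocycle condition, immediate since $N$ lands in coboundaries) is easily supplied.
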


\begin{proof}
Fix $f \in \Hom^0(\e,\e')^\times$ with left inverse $g \in \Hom^0(\e',\e)^\times$.  The map $M_f: \Hom^0(\e,\e) \rightarrow \Hom^0(\e,\e')$, defined by $M_f(\beta)= m_2(f,\beta)$, and the similarly defined $M_g: \Hom^0(\e,\e') \rightarrow \Hom^0(\e,\e)$ are vector space isomorphisms.  To see this, we compute using the $A_\infty$ relations that
\begin{align*} M_g \circ M_f(\beta) &= m_2(g,m_2(f,\beta)) \\
&= m_2(m_2(g,f),\beta) + m_1(m_3(g,f,\beta)) \\
& \qquad + m_3(m_1(g),f,\beta) + (-1)^{|g|}m_3(g,m_1(f),\beta) + (-1)^{|g|+|f|}m_3(g,f,m_1(\beta)) \\
&= \beta + m_1(m_3(g,f,\beta)) + m_3(g,f,m_1(\beta)).
\end{align*}
Applying Proposition \ref{prop:mktriangular}, we see that the matrix of $M_g \circ M_f$ with respect to the basis
\[ \{y_1, \ldots, y_\ell, a^+_1, \ldots, a^+_r\}\setminus \{a^+_i \mid |a^+_i| \neq 0\} \]
 has the form $I + N$ where $N$ is a strictly lower triangular matrix.  Thus $M_g \circ M_f$ is injective, and in particular the map $M_f: \Hom^0(\e,\e) \to \Hom^0(\e,\e')$ is injective.  By the same argument, we also see that $M_g: \Hom^0(\e,\e') \to \Hom^0(\e,\e)$ is injective.

In addition, we claim that $M_f\left( \Hom^0(\e,\e)^\times \right) \subset \Hom^0(\e,\e')^\times$.  Indeed, given $\alpha \in \Hom^0(\e,\e)^\times$, we have $m_1(\alpha)=0$ and $m_1(f)=0$ by definition, so
\[
m_1(M_f(\alpha)) = m_2(m_1(f),\alpha) + (-1)^{|f|}m_2(f, m_1(\alpha)) = 0.
\]
Also, the coefficients of the $y^+_l$ in $m_2(f,\alpha)$ are products of the corresponding coefficients of $f$ and $\alpha$, and in particular are all units, so that $m_2(f,\alpha)$ has left and right inverses.  This establishes that $M_f(\alpha) \in \Hom^0(\e,\e')^\times$.  Similarly, $M_g$ sends $\Hom^0(\e,\e')^\times$ to $\Hom^0(\e,\e)^\times$.

Since both $M_f$ and $M_g$ are injective as maps between $\Hom^0(\e,\e)^\times$ and $\Hom^0(\e,\e')^\times$, it follows that $\Hom^0(\e,\e)^\times$ is in bijection with $\Hom^0(\e,\e')^\times$.
\end{proof}

\begin{remark}
The same argument works to show that $\Hom^0(\e,\e)^\times \cong \Hom^0(\e',\e)^\times$, so repeated application of Proposition~\ref{prop:iso-bijection} shows that if $\e_0,\e_1,\e'_0,\e'_1: \cA(\Lambda)\to\coeffs$ are all isomorphic then there is a bijection $\Hom^0(\e_0,\e_1)^\times \cong \Hom^0(\e'_0,\e'_1)^\times$.
\end{remark}

\begin{corollary} \label{cor:count2}
Fix $\epsilon: \cA(\Lambda) \to \F_q$.  Write $B^0(\epsilon, \epsilon)$ for the elements
of $\Hom^{0}(\epsilon, \epsilon)$ which are coboundaries.  Then
\[ \# \{\epsilon' : \cA(\Lambda) \to \F_q \mid \epsilon' \cong \epsilon\} = q^{\dim \Hom^{0}(\epsilon, \epsilon) - \dim B^0(\epsilon, \epsilon) - \ell} \cdot
\frac{(q-1)^\ell}{\lvert\Aut(\epsilon)\rvert}. \]
\end{corollary}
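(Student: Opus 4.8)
The plan is to compare the global count of isomorphisms out of $\e$, furnished by Corollary~\ref{cor:count1}, against the local count of isomorphisms between a fixed pair of isomorphic augmentations, furnished by Proposition~\ref{prop:iso-bijection}. To set this up, I would first record that $\dim_{\F_q}\Hom^0(\e,\e')$ is independent of the pair $(\e,\e')$: the space $\Hom^0(\e,\e')$ is spanned by $y_1^+,\dots,y_\ell^+$ together with those $a_i^+$ for which $|a_i|=-1$, so its dimension is $\ell + r'$, where $r'$ is the number of degree-$(-1)$ Reeb chords. Hence the set in Corollary~\ref{cor:count1}, which has cardinality $(q-1)^\ell q^{r'}$, also has cardinality $(q-1)^\ell q^{\dim\Hom^0(\e,\e)-\ell}$.

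Next I would identify that set with $\bigsqcup_{\e'}\Hom^0(\e,\e')^\times$ in the sense used just before Proposition~\ref{prop:iso-bijection}: an element $\alpha$ with $m_1(\alpha)=0$ has left and right $m_2$-inverses precisely when its $y_i^+$-coefficients are units, and (by the discussion preceding Proposition~\ref{prop:unique-isomorphism}) this is equivalent to $[\alpha]$ being invertible in $H^0\Hom(\e,\e')$; so $\Hom^0(\e,\e')^\times$ is exactly the $\e'$-summand of the set in Corollary~\ref{cor:count1}, and it is nonempty iff $\e'\cong\e$. Proposition~\ref{prop:iso-bijection} then gives $\#\Hom^0(\e,\e')^\times = \#\Hom^0(\e,\e)^\times$ for every $\e'\cong\e$, whence
\[ (q-1)^\ell q^{\dim\Hom^0(\e,\e)-\ell} = \#\{\e':\cA(\Lambda)\to\F_q \mid \e'\cong\e\}\cdot\#\Hom^0(\e,\e)^\times. \]

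It then remains to evaluate $\#\Hom^0(\e,\e)^\times$, for which I would use the map $\Hom^0(\e,\e)^\times\to\Aut(\e)$, $\alpha\mapsto[\alpha]$, onto the unit group $\Aut(\e)$ of $H^0\Hom(\e,\e)$. This map is surjective, since any invertible class has a cocycle representative, which automatically has unit $y_i^+$-coefficients and hence lies in $\Hom^0(\e,\e)^\times$. Its fibers are cosets of the space $B^0(\e,\e)$ of degree-$0$ coboundaries, once one checks that $\Hom^0(\e,\e)^\times$ is saturated, i.e.\ closed under adding degree-$0$ coboundaries: a degree-$0$ coboundary is $m_1$ of an element of $\Hom^{-1}(\e,\e) = \Span\{a_j^+ : |a_j|=-2\}$, and by the $m_1$ formula recalled in the proof of Proposition~\ref{prop:unique-isomorphism} such an image is a combination of $a_k^+$ generators, hence has no $y_i^+$- or $x_i^+$-component, so adding it changes neither the cocycle condition nor the (unit) $y_i^+$-coefficients. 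Therefore $\#\Hom^0(\e,\e)^\times = |\Aut(\e)|\cdot q^{\dim B^0(\e,\e)}$, and substituting this into the displayed identity and solving for $\#\{\e':\e'\cong\e\}$ yields the stated formula.

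The step I expect to require the most care is this last one: pinning down exactly which cochains constitute $\Hom^0(\e,\e)^\times$, verifying that this set is precisely the full preimage of $\Aut(\e)$ under $\alpha\mapsto[\alpha]$ and is a union of $B^0(\e,\e)$-cosets, and confirming that every unit of $H^0\Hom(\e,\e)$ is realized by such a cochain. Each of these reduces to the explicit description of the generators of $\Hom^0$, the formula for $m_1$, and the invertibility criterion established before Proposition~\ref{prop:unique-isomorphism}; once these are in hand the remainder is routine bookkeeping with cardinalities of $\F_q$-vector spaces.
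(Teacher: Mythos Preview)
Your proposal is correct and follows essentially the same approach as the paper's proof: combine Corollary~\ref{cor:count1} with Proposition~\ref{prop:iso-bijection} to obtain $(q-1)^\ell q^{r'} = \#\{\e'\cong\e\}\cdot\lvert\Hom^0(\e,\e)^\times\rvert$, then evaluate $\lvert\Hom^0(\e,\e)^\times\rvert = \lvert\Aut(\e)\rvert\cdot q^{\dim B^0(\e,\e)}$ and note $r' = \dim\Hom^0(\e,\e)-\ell$. The paper asserts the identity $\lvert\Hom^0(\e,\e)^\times\rvert = \lvert\Aut(\e)\rvert\cdot q^{\dim B^0(\e,\e)}$ without further comment, whereas you spell out the verification (surjectivity onto $\Aut(\e)$ and saturation under $B^0$ via the observation that $m_1$ of Reeb-chord generators has no $y_i^+$-component); this extra care is welcome but does not constitute a different argument.
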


\begin{proof}
Combining Corollary~\ref{cor:count1} and Proposition~\ref{prop:iso-bijection}, we have shown that
\[ (q-1)^\ell q^{r'} = \left\lvert\bigsqcup_{\e' \cong \e} \Hom^0(\e,\e')^\times\right\rvert
= \#\{\e' \mid \e'\cong\e\} \cdot \left\lvert\Hom^0(\e,\e)^\times\right\rvert. \]
But $\lvert\Hom^0(\e,\e)^\times\rvert$ is $\lvert\Aut(\e)\rvert$ times the number of coboundaries in $\Hom^0(\e,\e)$, which is $q^{\dim B^0(\e,\e)}$.  The desired formula follows once we note in addition that $r' = \dim \Hom^0(\e,\e)-\ell$.
\end{proof}

We are now in a position to prove Theorem~\ref{thm:card}.

\begin{proof}[Proof of Theorem \ref{thm:card}]
Fix $\epsilon: \cA(\Lambda) \to \F_q$.
We expand the quantity $\dim \Hom^{0}(\epsilon, \epsilon) - \dim B^0(\epsilon, \epsilon)$.
By definition, we have:
$$\dim \Hom^{i}(\epsilon, \epsilon) = \dim B^{i}(\epsilon, \epsilon) + \dim H^i \Hom(\epsilon, \epsilon) + \dim B^{i+1}(\epsilon, \epsilon).$$
Summing over $i<0$,
$$\sum_{i < 0} (-1)^i \dim \Hom^{i}(\epsilon, \epsilon) = - \dim B^0(\epsilon, \epsilon) + \sum_{i < 0} (-1)^i \dim H^i \Hom(\epsilon, \epsilon).$$
Note that we also have
$$\chi_* =  \sum_{i < 0} (-1)^{i+1} r_i  + \sum_{i \ge 0} (-1)^i r_i =
-2\ell + \sum_{i \le 0} (-1)^i \dim \Hom^{i}(\epsilon, \epsilon) - \sum_{i > 0} (-1)^{i} \dim \Hom^{i}(\epsilon, \epsilon)$$
and
$$-tb = \sum_{i} (-1)^{i} \dim \Hom^{i}(\epsilon, \epsilon),$$
and hence
\begin{align*}
\frac{\chi_* - tb}{2} & =  -\ell + \dim \Hom^0(\epsilon, \epsilon) + \sum_{i < 0} (-1)^{i} \dim \Hom^{i}(\epsilon, \epsilon) \\
& =
\dim \Hom^0(\epsilon, \epsilon) - \dim B^0(\epsilon, \epsilon)
- \ell + \sum_{i < 0} (-1)^i \dim H^i \Hom(\epsilon, \epsilon).
\end{align*}
Thus we can rewrite the result of Corollary \ref{cor:count2} as:
 $$\# \{\epsilon' : \cA \to \F_q \mid \epsilon' \cong \epsilon\} =
q^{\frac{\chi_* - tb}{2}}  q^{-\sum_{i < 0} (-1)^i \dim H^i \Hom(\epsilon, \epsilon)} \cdot
\frac{(q-1)^\ell}{\lvert\Aut(\epsilon)\rvert}.$$
Rearranging, we have
$$
\frac{| H^{-1} \Hom(\epsilon, \epsilon) | \cdot | H^{-3} \Hom(\epsilon, \epsilon) |
\cdots}{\lvert\Aut(\epsilon)\rvert \cdot  |H^{-2} \Hom(\epsilon, \epsilon)| \cdots} =  q^{\frac{tb - \chi_*}{2}} \cdot (q-1)^{-\ell}
\cdot
\# \{\epsilon' : \cA \to \F_q \, | \, \epsilon' \cong \epsilon\},$$
and summing over the isomorphism classes on both sides gives the assertion of Theorem \ref{thm:card}.
\end{proof}

\section{Doubly Lagrangian slice knots}
\label{sec:doubly-slice}

Given Legendrian knots $\Lambda_-, \Lambda_+ \subset \R^3$, we say that $\Lambda_-$ is \emph{Lagrangian concordant} to $\Lambda_+$, denoted $\Lambda_- \prec \Lambda_+$, if there is a Lagrangian cylinder $L$ in the symplectization $(\R \times \R^3, d(e^t(dz-y\,dx)))$ and some $T \gg 0$ such that
\[ L \cap (\{t\} \times \R^3) = \begin{cases} \{t\} \times \Lambda_-, & t \leq -T \\ \{t\} \times \Lambda_+, & t \geq T. \end{cases} \]
The Lagrangian $L$ in this case is called a \textit{Lagrangian
  concordance} from $\Lambda_-$ to $\Lambda_+$.
If $U$ is the standard Legendrian unknot with $tb(U)=-1$, and we have both $U \prec \Lambda$ and $\Lambda \prec U$, we will say that $\Lambda$ is \emph{doubly Lagrangian slice}.  In this section we will use Corollary~\ref{cor:ruling-polynomial} to understand $\Aug(\Lambda)$ for such $\Lambda$.

\begin{theorem}[\cite{CNS}]
If $\Lambda$ is doubly Lagrangian slice, then its ruling polynomial\footnote{In \cite{CNS} the stated theorem is $R_\Lambda(z)=1$, but we use the normalization $R_\Lambda(z) = \sum_{R\in\cR} z^{-\chi(R)}$ rather than the sum $\sum z^{1-\chi(R)}$ of \cite{CNS}.} satisfies $R_\Lambda(z) = z^{-1}$.
\end{theorem}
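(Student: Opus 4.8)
The plan is to use Corollary~\ref{cor:ruling-polynomial} to turn this into a statement about the augmentation category, and then to compute the homotopy cardinality of $\Aug(\Lambda;\F_q)$ by exploiting functoriality under the two given Lagrangian concordances. First note that a Lagrangian concordance preserves both $tb$ and the rotation number, so $tb(\Lambda)=tb(U)=-1$ and $\cA(\Lambda)$ is $\Z$-graded. Corollary~\ref{cor:ruling-polynomial} then gives $\#\pi_{\ge 0}\Aug(\Lambda;\F_q)^* = q^{-1/2}\,R_\Lambda(q^{1/2}-q^{-1/2})$, while the standard unknot has $R_U(z)=z^{-1}$ and a single augmentation $\e_U$, with $\Aut(\e_U)=\F_q^\times$ and $H^*\Hom(\e_U,\e_U)\cong\F_q$ concentrated in degree $0$, so that $\#\pi_{\ge 0}\Aug(U;\F_q)^* = \frac{1}{q-1}$. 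It therefore suffices to prove $\#\pi_{\ge 0}\Aug(\Lambda;\F_q)^* = \frac{1}{q-1}$ for every prime power $q$: this yields $R_\Lambda(q^{1/2}-q^{-1/2}) = q^{1/2}/(q-1)$ at infinitely many values of $z=q^{1/2}-q^{-1/2}$, which forces the identity of Laurent polynomials $R_\Lambda(z)=z^{-1}$.

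Next I would bring in the two concordances. Gluing the Lagrangian disk filling of $U$ to a concordance realizing $U\prec\Lambda$ produces a Lagrangian disk filling of $\Lambda$, so by Ekholm--Honda--K\'alm\'an functoriality $\cA(\Lambda)$ admits an augmentation; hence $\Aug(\Lambda;\F_q)\neq\emptyset$. A concordance realizing $\Lambda\prec U$ induces a \dga{} map $\Phi\colon\cA(U)\to\cA(\Lambda)$, and since $\cA(U)$ has a unique augmentation $\e_U$, every augmentation $\e$ of $\cA(\Lambda)$ satisfies $\e\circ\Phi=\e_U$.

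The key input is that a Lagrangian concordance, being a cylindrical cobordism, induces an $A_\infty$ functor on augmentation categories (Pan) which is \emph{fully faithful}. This is the $A_\infty$ enhancement of the statement that a cylindrical cobordism induces isomorphisms on (bi)linearized Legendrian contact homology; it comes from the degeneration of the relevant cobordism long exact sequences, whose third term is the homology of the cobordism relative to one boundary component and so vanishes for a cylinder. Applying this to $\Lambda\prec U$, the induced functor $\mathcal F\colon\Aug(\Lambda;\F_q)\to\Aug(U;\F_q)$ sends every object to $\e_U$, is fully faithful, and is essentially surjective because $\Aug(\Lambda;\F_q)$ is nonempty; hence it is an $A_\infty$ quasi-equivalence $\Aug(\Lambda;\F_q)\simeq\Aug(U;\F_q)$. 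Since the homotopy cardinality is an invariant of the $A_\infty$-equivalence class, $\#\pi_{\ge 0}\Aug(\Lambda;\F_q)^* = \#\pi_{\ge 0}\Aug(U;\F_q)^* = \frac{1}{q-1}$, as required.

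The main obstacle is the full faithfulness claim of the third paragraph: one has to set up the cobordism long exact sequence with the correct grading conventions, verify that it degenerates for a cylinder, check that it applies to \emph{all} augmentations of the concave end---which is automatic here, as they all pull back along $\Phi$ to the unique augmentation of $U$---and propagate the resulting quasi-isomorphisms from the $2$- and $3$-copies up to the $A_\infty$ level. I note that this route in fact establishes the stronger statement that $\Aug(\Lambda;\F_q)$ is $A_\infty$ quasi-equivalent to $\Aug(U;\F_q)$; in the paper we instead take the ruling-polynomial identity as an input from \cite{CNS} and then recover the quasi-equivalence from Corollary~\ref{cor:ruling-polynomial}.
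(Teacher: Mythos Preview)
The theorem in question is not proved in this paper; it is quoted from \cite{CNS} as an external input, so there is no ``paper's own proof'' to compare against. The paper's logic in Section~\ref{sec:doubly-slice} runs in exactly the opposite direction from yours: it takes $R_\Lambda(z)=z^{-1}$ as known from \cite{CNS}, applies Corollary~\ref{cor:ruling-polynomial} to get $\#\pi_{\ge 0}\Aug(\Lambda;\F_q)^*=\frac{1}{q-1}$, and from this concludes (Proposition~\ref{prop:slice-homotopic}, Theorem~\ref{thm:slice-equivalence}) that all augmentations are isomorphic and that $F_L\colon\Aug(U;\F_q)\to\Aug(\Lambda;\F_q)$ is a quasi-equivalence. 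You propose to reverse the implication: establish a quasi-equivalence $\Aug(\Lambda;\F_q)\simeq\Aug(U;\F_q)$ first, via full faithfulness of the functor induced by the concordance $\Lambda\prec U$, and then read off $R_\Lambda(z)=z^{-1}$ from Corollary~\ref{cor:ruling-polynomial}. If this works it gives an independent proof of the \cite{CNS} result, and indeed makes the citation unnecessary.

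The substantive gap is precisely the one you flag. Full faithfulness of the concordance-induced functor is the entire content of your argument, and nothing in this paper supplies it: the paper obtains full faithfulness of $F_L$ only \emph{after} already knowing the homotopy cardinality, by observing that both $H^*\Hom(\e_U,\e_U)$ and $H^*\Hom(\e_L,\e_L)$ are one-dimensional and spanned by the unit. Your route requires a cobordism long exact sequence for \emph{bilinearized} contact cohomology (since you need it for every pair $(\e_1,\e_2)$ of augmentations of $\Lambda$, not just diagonal pairs), compatibility of the connecting map with the $A_\infty$ functor on Hom spaces, and the identification of the third term with $H^*(L,\Lambda_-)$ so that it vanishes for a cylinder. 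These ingredients are available in the literature (Chantraine--Dimitroglou Rizell--Ghiggini--Golovko, Pan) but need to be assembled with care; until that is done, the argument remains a program rather than a proof. Your final observation is accurate: the paper deliberately avoids this work by importing the ruling-polynomial identity from \cite{CNS}, whose own proof proceeds by entirely different means.
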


\noindent
A doubly Lagrangian slice knot must have $tb(\Lambda)=-1$, since the Thurston--Bennequin number is invariant under Lagrangian concordance \cite{Cha}, so according to Corollary~\ref{cor:ruling-polynomial} such knots satisfy
\[ \#\pi_{\geq 0}\Aug(\Lambda;\F_q)^* = q^{-1/2}(q^{1/2} - q^{-1/2})^{-1} = \frac{1}{q-1}. \]

Now given a Lagrangian concordance $L$ from $\Lambda_-$ to
$\Lambda_+$, the work of \cite{EHK} produces a \dga{} morphism
\[
\Phi_L :\thinspace \cA(\Lambda_+) \to \cA(\Lambda_-)
\]
between the \dgas{} of $\Lambda_\pm$. In \cite{EHK}, these \dgas{} are
considered over $\Z/2$, but in fact it is known that the result still holds for the
\dgas{} over $\Z[t,t^{-1}]$ as in \cite{ENS}. More precisely, the
cobordism map $\Phi_L$ as presented in \cite{EHK} counts gradient flow
trees for a Morse cobordism, and these trees in turn correspond to
holomorphic disks for Legendrian contact homology in
$J^1([0,1]\times\R)$, the $1$-jet space of a strip, by
\cite{Ekholmtrees}. Orientations for the moduli spaces of these disks
have been worked out in \cite{EESorient}, and it follows that we can
lift $\Phi_L$ from a map over $\Z/2$ to a map over $\Z$. To lift this
further to $\Z[t,t^{-1}]$, choose base points on $\Lambda_\pm$ and
connect them by a path $\gamma$ in $L$; then the powers of $t$ in the
lifted map $\Phi_L$ count signed intersections
of the flow trees with $\gamma$.

By \cite[Proposition~3.29]{NRSSZ}, the map $\Phi_L$ of \dgas{} over
$\Z[t,t^{-1}]$ induces a functor
\[ F_L: \Aug(U;\F_q) \to \Aug(\Lambda;\F_q) \]
of unital $A_\infty$ categories. Let $\epsilon_U$ be the unique augmentation in $\Aug(U;\F_q)$, and write $\epsilon_L$ for the augmentation of $\cA(\Lambda)$ that is the image of $\epsilon_U$ under $F_L$; note that $\e_L = \e_U \circ \Phi_L$ and $\e_L$ is the augmentation induced by capping off the concave end of $L$ to get a Lagrangian disk filling of $\Lambda$.

\begin{proposition} \label{prop:slice-homotopic}
If $\Lambda$ is doubly Lagrangian slice, then all augmentations $\e: \cA(\Lambda) \to \F_q$ are homotopic to $\e_L$.
\end{proposition}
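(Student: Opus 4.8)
The plan is to combine the homotopy-cardinality computation of Corollary~\ref{cor:ruling-polynomial} with a computation of the self-Homs of the distinguished augmentation $\e_L$. First I would reduce the statement: since two $\dga$ maps $\cA(\Lambda)\to\F_q$ are homotopic exactly when they are isomorphic as objects of $\Aug(\Lambda;\F_q)$ (cf.\ Proposition~\ref{prop:nrssz-homotopy} and \cite{NRSSZ}), it is equivalent to prove that $\Aug(\Lambda;\F_q)$ has a single isomorphism class of objects, namely $[\e_L]$.

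Next I would feed in the numerics. Since $\Lambda$ is doubly Lagrangian slice we have $tb(\Lambda)=-1$ and $R_\Lambda(z)=z^{-1}$, so, as noted above, $\#\pi_{\ge 0}\Aug(\Lambda;\F_q)^*=\tfrac1{q-1}$. By its defining formula this homotopy cardinality is a sum, over the isomorphism classes $[\e]$ of objects of $\Aug(\Lambda;\F_q)$, of the \emph{strictly positive} rational numbers
\[
c(\e):=\frac{1}{|\Aut(\e)|}\cdot
\frac{|H^{-1}\Hom(\e,\e)|\cdot|H^{-3}\Hom(\e,\e)|\cdots}{|H^{-2}\Hom(\e,\e)|\cdot|H^{-4}\Hom(\e,\e)|\cdots}.
\]
So it is enough to show that $c(\e_L)=\tfrac1{q-1}$: the single class $[\e_L]$ then already accounts for the whole sum, and positivity of all remaining terms rules out any other isomorphism class, so every augmentation is isomorphic — hence homotopic — to $\e_L$.

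The crux, then, is to compute $H^*\Hom(\e_L,\e_L)$, and I expect this to be the main obstacle. Here one exploits that $\e_L$ is the augmentation induced by the exact Lagrangian disk obtained by capping off the concave end of $L$; for such a filling-induced augmentation I expect the endomorphism complex to behave exactly as for the standard Legendrian unknot $U$ (which also bounds a Lagrangian disk), namely $H^0\Hom(\e_L,\e_L)\cong\F_q$, spanned by the unit since $\Lambda$ is connected, and $H^k\Hom(\e_L,\e_L)=0$ for $k\ne0$. The argument for this should go through the duality for self-Homs in the augmentation category together with the identification of the linearized complex supplied by the Lagrangian disk (reducing to the singular cohomology of the disk, which is concentrated in degree zero). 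Granting it, $\Aut(\e_L)=(H^0\Hom(\e_L,\e_L))^\times\cong\F_q^\times$ has order $q-1$ and the products in $c(\e_L)$ are empty, so $c(\e_L)=\tfrac1{q-1}$ and the argument is complete.

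Additionally, I would record a $\dga$-level rigidity that is available but does not by itself suffice: writing $L'$ for the Lagrangian concordance from $\Lambda$ to $U$, for \emph{every} augmentation $\e$ of $\cA(\Lambda)$ the composite $\e\circ\Phi_{L'}$ is an augmentation of $\cA(U)$, hence equals $\e_U$, so that $\e\circ(\Phi_{L'}\circ\Phi_L)=\e_L$. Together with $\Phi_L\circ\Phi_{L'}\simeq\operatorname{id}_{\cA(U)}$ — which follows by applying the Eliashberg--Polterovich theorem \cite{EP} to the composite Lagrangian concordance from $U$ to itself — this identifies $\e_L$ canonically. It does not, however, yield the homotopy statement directly, because the composite Lagrangian cylinder from $\Lambda$ to $\Lambda$ need not be trivial, which is precisely why the homotopy-cardinality input is needed.
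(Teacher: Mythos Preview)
Your proposal is correct and follows essentially the same approach as the paper: compute the homotopy cardinality as $\frac{1}{q-1}$, show that the class $[\e_L]$ already contributes $\frac{1}{q-1}$ via the computation of $H^*\Hom(\e_L,\e_L)$, and conclude by positivity of the remaining terms that there are no other isomorphism classes (hence all augmentations are homotopic to $\e_L$). The step you flag as the ``main obstacle'' is dispatched in the paper by a direct citation of \cite[Proposition~5.7]{NRSSZ}, which gives $H^*\Hom(\e_L,\e_L)\cong H^*(D^2;\F_q)$ for any augmentation induced by an exact Lagrangian filling, so no additional duality argument is needed.
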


\begin{proof}

Since $\e_L$ is induced by a disk filling of $\Lambda$, \cite[Proposition~5.7]{NRSSZ} says that $H^*\Hom(\e_L,\e_L) \cong H^*(D^2; \F_q)$.  In particular, the cohomology of $\Hom(\e_L,\e_L)$ is supported in degree zero and has rank 1, so the only possible automorphisms of $\e_L$ are nonzero scalar multiples of the unit $e_{\e_L}$ and there are no higher homotopies.  The isomorphism class of $\e_L$ therefore contributes $\frac{1}{|\F_q^\times|} = \frac{1}{q-1}$ to the homotopy cardinality of $\Aug(\Lambda;\F_q)^*$.  But we have seen above that this is the entire homotopy cardinality, so all augmentations of $\Lambda$ are isomorphic to $\e_L$.  It follows by \cite[Proposition~5.17]{NRSSZ} that all augmentations of $\Lambda$ are homotopic to $\e_L$.
\end{proof}

\begin{remark}
In fact, this can be strengthened somewhat, using additional results
from \cite{CNS}: if $U\prec \Lambda$ and $\Lambda \prec U$ then the
Legendrian $n$-stranded cable $S(\Lambda,tw_n)$ (see
\cite[Section~2.4]{CNS}), which is topologically the Seifert-framed
$n$-cable of $\Lambda$, has ruling polynomial $z^{-n}$ and $tb=-n$,
hence $\#\pi_{\ge 0}\Aug(S(\Lambda,tw_n);\F_q)^\ast = \frac{1}{(q-1)^n}$.
Given a Lagrangian concordance $L$ from $U$ to $\Lambda$, we produce a
Lagrangian concordance $L^n$ (i.e.\ a disjoint union of $n$ Lagrangian
cylinders) from $S(U,tw_n)$ to $S(\Lambda,tw_n)$, and since
$S(U,tw_n)$ is Legendrian isotopic to $n$ unlinked copies of $U$, it admits
a filling by $n$ Lagrangian disks $\bigsqcup_{i=1}^n D^2$.  Let $\e_U$ be the induced augmentation;
then $\e_{L^n} = F_{L^n}(\e_U)$ corresponds to the filling
$\left(\bigsqcup D^2\right) \cup_{S(\Lambda,tw_n)} L^n$ of $S(\Lambda,tw_n)$.  It follows that
\[ H^*\Hom(\e_U,\e_U) \cong H^*\Hom(\e_{L^n},\e_{L^n}) = H^*(D^2 \sqcup \dots \sqcup D^2; \F_q) \cong \F_q^{\oplus n} \]
as abelian groups, supported in degree 0.  If we can show that
$H^*\Hom(\e_{L^n},\e_{L^n}) \cong \F_q^{\oplus n}$ as a ring, then it
will follow that $\lvert\Aut(\e_{L^n})\rvert = (q-1)^n$, and hence
that all augmentations of $S(\Lambda,tw_n)$ are homotopic to
$\e_{L^n}$ just as in the proof of
Proposition~\ref{prop:slice-homotopic}.
This is a special case of the statement that the isomorphism $H^*\Hom(\e_L,\e_L) \cong H^*(L)$ is a ring isomorphism for any exact Lagrangian filling $L$, which should be provable by considering $Y$-shaped gradient flow trees on $L$; here we prove just the special case that we need.

If $L'$ is a Lagrangian concordance from $\Lambda$ to $U$ and $(L')^n$ is the corresponding cable of $L'$, then $L \cup_\Lambda L'$ is Hamiltonian isotopic to a product concordance \cite{EP,Cha-symmetric}, hence so is $L^n \cup_{S(\Lambda,tw_n)} (L')^n$.  The ring homomorphism
\[ F_{(L')^n} \circ F_{L^n} = F_{L^n \cup_{S(\Lambda,tw_n)} (L')^n}: H^*\Hom(\e_U,\e_U)\to H^*\Hom(\e_U,\e_U) \]
is therefore an isomorphism, and so
$F_{L^n}: H^*\Hom(\e_U,\e_U) \to H^*\Hom(\e_{L^n},\e_{L^n})$
is injective.  Both of these rings have cardinality $q^n$, so the map is bijective and hence an isomorphism of rings; and a direct computation shows that $H^*\Hom(\e_U,\e_U) = \F_q^{\oplus n}$ as rings, so the same is true of $H^*\Hom(\e_{L^n},\e_{L^n})$ as claimed.
\end{remark}

By Proposition~\ref{prop:slice-homotopic}, if $\Lambda$ is doubly
Lagrangian slice, then $F_L: \Aug(U;\F_q) \to \Aug(\Lambda;\F_q)$ maps
between categories that each have one object up to equivalence. In
fact, we can say more. Say that an $A_\infty$ functor between
unital $A_\infty$ categories is an \textit{$A_\infty$ quasi-equivalence} if the induced
functor on cohomology categories is fully faithful and essentially surjective.

\begin{theorem} \label{thm:slice-equivalence}
If $\Lambda$ is doubly Lagrangian slice and $L$ is a Lagrangian
concordance from $U$ to $\Lambda$, then the induced functor $F_L:
\Aug(U;\F_q) \to \Aug(\Lambda; \F_q)$ is an $A_\infty$ quasi-equivalence.
\end{theorem}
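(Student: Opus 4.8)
The plan is to check the two conditions in the definition of $A_\infty$ quasi-equivalence — essential surjectivity and full faithfulness of the induced functor $H^\ast F_L$ on cohomology categories — the point being that each has effectively been reduced to a single computation by the earlier results. The crucial structural feature is that $\Aug(U;\F_q)$ has exactly one object, the unique augmentation $\e_U$ of $\cA(U)$ (there is only one, since an augmentation must vanish on the single degree-$1$ Reeb chord of $U$ and is then pinned down on $t$ by $\e(\partial a)=0$). Consequently full faithfulness amounts to a statement about one endomorphism complex, and essential surjectivity to a statement about a single object.

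For essential surjectivity, the image of $H^0 F_L$ contains $\e_L = F_L(\e_U)$, and by Proposition~\ref{prop:slice-homotopic} — more precisely, by the intermediate step in its proof, which deduces from the homotopy-cardinality computation of Corollary~\ref{cor:ruling-polynomial} that $\e_L$ accounts for the entire homotopy cardinality of $\Aug(\Lambda;\F_q)$ — every augmentation of $\cA(\Lambda)$ is isomorphic in $H^0\Aug(\Lambda;\F_q)$ to $\e_L$. (Alternatively: Proposition~\ref{prop:slice-homotopic} gives that every such augmentation is \dga{}-homotopic to $\e_L$, and \cite[Proposition~5.17]{NRSSZ} upgrades homotopy to isomorphism in the augmentation category.) Hence $H^\ast F_L$ is essentially surjective.

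For full faithfulness it suffices, since $\e_U$ is the only object of $\Aug(U;\F_q)$, to show that $F_L$ induces an isomorphism $H^\ast\Hom(\e_U,\e_U) \to H^\ast\Hom(\e_L,\e_L)$. Here I would invoke that both $\e_U$ and $\e_L$ arise from Lagrangian disk fillings: $\e_U$ from the standard Lagrangian disk bounded by the $tb=-1$ unknot (or by a direct computation of the linearized contact cohomology of $U$), and $\e_L$ from capping off the concave end of $L$, as noted just before Proposition~\ref{prop:slice-homotopic}. By \cite[Proposition~5.7]{NRSSZ}, $H^\ast\Hom(\e_U,\e_U)\cong H^\ast(D^2;\F_q)$ and $H^\ast\Hom(\e_L,\e_L)\cong H^\ast(D^2;\F_q)$, so both are concentrated in degree $0$ and one-dimensional over $\F_q$. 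The induced map in degree $0$ is a unital homomorphism of $\F_q$-algebras $\F_q \to \F_q$; as the source is a field this is injective, hence an isomorphism since both sides have dimension $1$, and in all other degrees both sides vanish. Thus $H^\ast F_L$ is fully faithful, completing the proof.

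I do not anticipate a substantive obstacle: the genuinely hard content — that being doubly Lagrangian slice forces all augmentations of $\Lambda$ to coincide, up to isomorphism, with the disk-filling augmentation $\e_L$ — is already packaged into Proposition~\ref{prop:slice-homotopic}, which itself rested on the homotopy-cardinality formula. The only place demanding a little care is confirming that $F_L$, being a functor of \emph{unital} $A_\infty$ categories by \cite[Proposition~3.29]{NRSSZ}, sends $[e_{\e_U}]$ to $[e_{\e_L}]$ and hence induces a \emph{nonzero} map on the one-dimensional cohomology; with that in hand the dimension count does the rest.
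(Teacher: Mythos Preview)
Your proposal is correct and follows essentially the same route as the paper: essential surjectivity via Proposition~\ref{prop:slice-homotopic}, and full faithfulness by observing that both $H^*\Hom(\e_U,\e_U)$ and $H^*\Hom(\e_L,\e_L)$ are one-dimensional (spanned by the units) and that unitality of $F_L$ forces $[e_{\e_U}]\mapsto[e_{\e_L}]$, making the induced map an isomorphism. The paper phrases the last step directly as ``$F_L(e_{\e_U})=e_{\e_L}$ and the cohomology rings are spanned by these classes,'' which is exactly your unital-algebra-map argument in different words.
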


\begin{proof}
We revisit the proof of Proposition~\ref{prop:slice-homotopic}.  Since
all objects of $\Aug(\Lambda)$ are isomorphic to $\e_L$, the functor
$F_L$ is essentially surjective.  Moreover, the induced map
$H^*\Hom(\e_U,\e_U) \to H^*\Hom(\e_L,\e_L)$ is an isomorphism, since
$F_L(e_{\e_U}) = e_{\e_L}$ and the cohomology rings are spanned by the
classes $[e_{\e_U}]$ and $[e_{\e_L}]$ respectively, so $F_L$ is also
fully faithful on cohomology.
\end{proof}

In light of Theorem~\ref{thm:slice-equivalence}, a natural question to
ask is whether the underlying \dga{} map $\Phi_L$ is a quasi-isomorphism
(or, better yet, a stable tame isomorphism, which would automatically
imply Theorem~\ref{thm:slice-equivalence}). This is currently an open
question, but Theorem~\ref{thm:slice-equivalence} does imply that the
map $\Phi_L$ is a quasi-isomorphism on a certain completion of the
\dgas{}.

Given an augmentation $\epsilon :\thinspace \cA(\Lambda) \to \F_q$,
we follow the notation of \cite{NRSSZ}: let
$\cA^\epsilon = (\cA\otimes\F_q)/(t=\epsilon(t))$
(this is the unital tensor algebra over $\F_q$ generated by Reeb chords of $\Lambda$, and the differential $\partial$ descends to $\cA^\epsilon$),
and let $\cA^\epsilon_+$ denote the subspace of $\cA^\epsilon$ generated by all nonempty words of Reeb chords.
Define $\phi_\e :\thinspace \cA^\epsilon \to \cA^\epsilon$ to be the algebra automorphism determined by $\phi_\e(a) = a+\e(a)$ for all Reeb chords $a$. Then the differential
$\partial_\e := \phi_\e \circ \partial \circ \phi_\e^{-1}$ on $\cA^\e$ is filtered with respect to the filtration
\[
\cA^\e \supset \cA^\e_+ \supset (\cA^\e_+)^2 \supset \cdots.
\]
Let $\widehat{\cA}^\epsilon(\Lambda)$ denote the completion of
$\cA^\epsilon$ with respect to this filtration, and note that $\partial_\e$ extends to a differential on $\widehat{\cA}^\epsilon(\Lambda)$.

\begin{proposition}
If $\Lambda$ is doubly Lagrangian slice and $L$ is a Lagrangian
concordance from $U$ to $\Lambda$,
\label{prop:completion}
then the morphism
\[
\Phi_L :\thinspace \widehat{\cA}^{\epsilon_L}(\Lambda) \to \widehat{\cA}^{\epsilon_U}(U)
\]
is a quasi-isomorphism.
\end{proposition}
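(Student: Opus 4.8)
The plan is to descend to the associated graded of the augmentation-ideal filtration, where the question linearizes, and then to combine two geometric inputs: that $\epsilon_L$ is carried by the Lagrangian disk filling $(\text{trivial disk})\cup L$, and that $\Lambda\prec U$ in addition to $U\prec\Lambda$. Since $\epsilon_L=\epsilon_U\circ\Phi_L$, the map $\Phi_L$ sends $\ker\epsilon_L$ into $\ker\epsilon_U$, so (after the conjugation by $\phi_{\epsilon_\bullet}$ implicit in the statement) it is filtered for $\widehat\cA^\epsilon\supset\overline{\cA^\epsilon_+}\supset\overline{(\cA^\epsilon_+)^2}\supset\cdots$, and these filtrations are exhaustive and complete by construction. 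A filtered chain map between complete exhaustive filtered complexes is a quasi-isomorphism as soon as it is one on associated graded: apply the five lemma to the towers $\{X/F^nX\}$, and pass to the inverse limit, the $\varprojlim^{1}$ contributions of source and target being identified since the graded map is a quasi-isomorphism. The $k$-th graded piece of $\widehat\cA^\epsilon(\Lambda)$ is the $k$-th tensor power of the linearized Chekanov--Eliashberg complex $C_\epsilon:=(\cA^\epsilon_+/(\cA^\epsilon_+)^2,\ \overline{\partial_\epsilon})$, and the induced map is $(\mathrm{gr}_1\Phi_L)^{\otimes k}$, where $\mathrm{gr}_1\Phi_L\colon C_{\epsilon_L}(\Lambda)\to C_{\epsilon_U}(U)$ is the linearized cobordism map. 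By the Künneth theorem over $\F_q$, it therefore suffices to prove that $\mathrm{gr}_1\Phi_L$ is a quasi-isomorphism.

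First I would pin down both of these complexes up to quasi-isomorphism. For the unknot, $\widehat\cA^{\epsilon_U}(U)=(\F_q[[a]],0)$, so $C_{\epsilon_U}(U)=(\F_q\langle a\rangle,0)$ and $H_*(C_{\epsilon_U}(U))\cong\F_q$, concentrated in degree $1$. For $\Lambda$, the augmentation $\epsilon_L$ is induced by a Lagrangian disk, so \cite[Proposition~5.7]{NRSSZ} gives $H^*\Hom(\epsilon_L,\epsilon_L)\cong H^*(D^2;\F_q)=\F_q$ in degree $0$. Using the explicit structure of the self-hom complex of a knot in $\Aug(\Lambda)$ --- the unit $y^+$ is a closed generator in degree $0$ that splits off as a direct summand (it carries no differential and appears in no $m_1$-image, by degree and by the formulas for $m_1(y_i^+)$ and $m_1(a_j^+)$), and on the complement the differential $m_1$ restricted to the span of the $a_i^+$ is the transpose of the linearized differential of $\cA(\Lambda)$, while $m_1(x^+)$ lands in that span --- the vanishing of $\Hom(\epsilon_L,\epsilon_L)$-cohomology outside degree $0$ forces the span-of-$a_i^+$ complex to have cohomology $\F_q$ in the one degree hit by $m_1(x^+)$, i.e. $H_*(C_{\epsilon_L}(\Lambda))\cong\F_q$ concentrated in degree $1$ as well. (Equivalently, this is the statement that the linearized homology of a disk filling is $H_{*+1}(D^2,\partial D^2)$.) So the source and target of $\mathrm{gr}_1\Phi_L$ have abstractly isomorphic homology, namely $\F_q$ in a single degree.

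Next I would produce a one-sided inverse. Since $\Lambda\prec U$ there is a Lagrangian concordance $L'$ from $\Lambda$ to $U$, and $L\cup_\Lambda L'$ is a concordance from $U$ to itself, hence Hamiltonian isotopic rel ends to the trivial cylinder \cite{EP,Cha-symmetric}, exactly as in the Remark following Proposition~\ref{prop:slice-homotopic}. Thus $\Phi_L\circ\Phi_{L'}=\Phi_{L\cup_\Lambda L'}$ is \dga{}-homotopic to $\mathrm{id}_{\cA(U)}$. Because $\cA(U)$ is supported in non-negative degrees the chain homotopy kills the invertible generator $t$, so $\Phi_{L\cup_\Lambda L'}$ fixes $t$; since moreover $\epsilon_U$ is the unique augmentation of $\cA(U)$, the homotopy descends to $\cA^{\epsilon_U}(U)$ and then to $\widehat\cA^{\epsilon_U}(U)=(\F_q[[a]],0)$, where --- the differential being zero --- the homotopy relation collapses to the equality $\Phi_L\circ\Phi_{L'}=\mathrm{id}$. (Here $\Phi_{L'}$ is the induced filtered map $\widehat\cA^{\epsilon_U}(U)\to\widehat\cA^{\epsilon_L}(\Lambda)$, which is legitimate because $\epsilon_L\circ\Phi_{L'}$ is an augmentation of $\cA(U)$ and hence equals $\epsilon_U$.) Applying $\mathrm{gr}_1$ gives $\mathrm{gr}_1\Phi_L\circ\mathrm{gr}_1\Phi_{L'}=\mathrm{id}_{C_{\epsilon_U}(U)}$, exhibiting $C_{\epsilon_U}(U)$ as a chain-complex retract of $C_{\epsilon_L}(\Lambda)$; writing $C_{\epsilon_L}(\Lambda)\cong C_{\epsilon_U}(U)\oplus Q$ accordingly, the homology count of the previous paragraph forces $H_*(Q)=0$, so $\mathrm{gr}_1\Phi_L$ is a quasi-isomorphism, and the first paragraph concludes the proof.

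The crux is the meshing of the last two inputs: the only way I see to convert the purely homotopy-theoretic information we have --- that $\Phi_L$ becomes an equivalence after passing to augmentation categories, in the spirit of Theorem~\ref{thm:slice-equivalence}, together with the Hamiltonian triviality of $L\cup_\Lambda L'$ --- into the honest retraction data $\mathrm{gr}_1\Phi_L\circ\mathrm{gr}_1\Phi_{L'}=\mathrm{id}$ that the associated-graded argument requires is to exploit that $\widehat\cA^{\epsilon_U}(U)$ is the maximally degenerate \dga{} $(\F_q[[a]],0)$, on which ``homotopic to the identity'' upgrades to ``equal to the identity.'' The two bookkeeping steps that need genuine care are the identification of $H^*\Hom(\epsilon_L,\epsilon_L)$ with $H_*(C_{\epsilon_L}(\Lambda))$ through the structure of the self-hom complex, and the check that the \dga{}-homotopy really descends to the completion; the filtered-complex formalism of the first paragraph is routine but worth spelling out, as the filtration is not bounded below in each degree and it is completeness that validates the associated-graded criterion.
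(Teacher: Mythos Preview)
Your argument is correct and takes a genuinely different route from the paper's.

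The paper proves this proposition by invoking the $A_\infty$ duality between $(\widehat{\cA}^\e,\partial_\e)$ and the Bourgeois--Chantraine endomorphism algebra $\Hom_-(\e,\e)$: it places $F_L$ into the commutative ladder with rows $0\to\Hom_-\to\Hom_+\to C^*(\text{knot})\to 0$, uses Theorem~\ref{thm:slice-equivalence} (which in turn rests on the cardinality computation of Theorem~\ref{thm:card} via Proposition~\ref{prop:slice-homotopic}) to see that the $\Hom_+$ column is a quasi-isomorphism, deduces the same for the $\Hom_-$ column by the five lemma, and then dualizes back to the completed \dga{}. Your proof instead stays on the \dga{} side throughout: you reduce to the associated graded by the completeness criterion, linearize via K\"unneth, compute both linearized homologies directly from the disk filling, and then use Eliashberg--Polterovich on $L\cup_\Lambda L'$ to produce an honest retraction $\mathrm{gr}_1\Phi_L\circ\mathrm{gr}_1\Phi_{L'}=\mathrm{id}$, which together with the rank count forces $\mathrm{gr}_1\Phi_L$ to be a quasi-isomorphism.

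The trade-off is this: the paper's proof is shorter once one accepts the $\Hom_-$/completed-\dga{} duality and the categorical machinery already built, but it depends logically on the homotopy-cardinality theorem that is the main result of the paper. Your argument is more elementary and self-contained---it needs only \cite[Proposition~5.7]{NRSSZ} and the Eliashberg--Polterovich rigidity, both of which are external geometric inputs, and it never appeals to Theorem~\ref{thm:card} or to the $\AugBC$ duality. The price is the extra bookkeeping you correctly flag: checking that the conjugated maps $\phi_{\e_\bullet}\Phi_\bullet\phi_{\e_\bullet}^{-1}$ are filtered, that the \dga{} homotopy on $\cA(U)$ descends through $t=\e_U(t)$ (your observation that $K(t)=0$ and $\Phi_L\Phi_{L'}(t)=t$ by degree is exactly what is needed), and unpacking the relationship between $H^*\Hom_+(\e_L,\e_L)$ and the linearized homology via the splitting off of $y^+$. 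All of those steps go through as you describe.
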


\begin{proof}
When $(\cA,\partial)$ is the \dga{} for a Legendrian knot and $\e:\thinspace \cA\to\F_q$ is an augmentation, the twisted \dga{} $(\cA^\e,\partial_\e)$ dualizes to the $A_\infty$ algebra $\Hom_-(\e,\e)$, where $\Hom_-$ denotes morphism in the Bourgeois--Chantraine augmentation category $\AugBC(\Lambda;\F_q)$ \cite{BC,NRSSZ}.
Note that $\Hom_-(\e,\e)$ is a subcomplex of $\Hom_+(\e,\e)$, and that in our setting the $A_\infty$ functor $F_L :\thinspace \Aug(U;\F_q) \to \Aug(\Lambda;\F_q)$ in particular gives chain maps $F_L \thinspace \Hom_\pm(\e_U,\e_U) \to \Hom_\pm(\e_L,\e_L)$. We then have the following commutative diagram:
\[
\xymatrix{
0 \ar[r] & \Hom_-(\e_U,\e_U) \ar[r] \ar^{F_L}[d] & \Hom_+(\e_U,\e_U) \ar[r] \ar_{F_L}[d] & C^*(U) \ar[r] \ar[d] & 0 \\
0 \ar[r] & \Hom_-(\e_L,\e_L) \ar[r] & \Hom_+(\e_L,\e_L) \ar[r] & C^*(\Lambda) \ar[r] & 0,
}
\]
where the exact rows are as in \cite[Proposition~5.2]{NRSSZ}.

By Theorem~\ref{thm:slice-equivalence}, the middle column induces an isomorphism on cohomology, and so the left column does as well. That is, $F_L :\thinspace \AugBC(U;\F_q) \to \AugBC(\Lambda;\F_q)$ is an $A_\infty$ quasi-equivalence.
The result now follows by noting that the duals of the $A_\infty$ algebras $\AugBC(U;\F_q)$, $\AugBC(\Lambda;\F_q)$ are the completed \dgas{}
$(\widehat{\cA}^{\e_U}(U),\partial_{\e_U})$ and $(\widehat{\cA}^{\e_L}(\Lambda),\partial_{\e_L})$, and that the dual map
to $F_L :\thinspace \AugBC(U;\F_q) \to \AugBC(\Lambda;\F_q)$ is $\Phi_L$.
\end{proof}

\begin{remark}
Although we would like to replace the completions $\widehat{\cA}$ by the original algebras $\cA$ in the statement of Proposition~\ref{prop:completion}, our technique does not allow us to do this directly. Indeed, it is possible for two (augmented) \dgas{} $(\cA,\partial)$ and $(\cA',\partial')$ to have dual $A_\infty$ algebras that are $A_\infty$ quasi-isomorphic without the original \dgas{} being quasi-isomorphic. Let $\coeffs$ be a field, let $\cA = \coeffs$ with $\partial = 0$, and let $\cA'$ be the tensor algebra over $\coeffs$ generated by $a,b$ with $\partial'(a) = b-b^2$, $\partial'(b) = 0$, $|a|=1$, $|b|=0$. The $A_\infty$ algebra dual to $(\cA',\partial')$ has generators $a^*,b^*$ with $m_1(b^*) = a^*$, $m_1(a^*)=0$, and thus the $m_1$ cohomologies of the $A_\infty$ algebras dual to both $(\cA,\partial)$ and $(\cA',\partial')$ are $0$. The inclusion map $i :\thinspace \cA \to \cA'$ thus induces an isomorphism on cohomology on the dual $A_\infty$ algebras. However, $H_0(\cA,\partial) = \coeffs$, while
$H_0(\cA',\partial') \cong \coeffs[b]/(b-b^2)$, and so $i_* :\thinspace H(\cA,\partial) \to H(\cA',\partial')$ is not an isomorphism in this case.
\end{remark}

\section{Counting $2m$-graded augmentations}
\label{sec:2m-graded}

So far, we have restricted ourselves to the setting where the \dga{} has a $\Z$ grading and augmentations and rulings respect this grading.
In this section, we sketch how to extend our results on counting
augmentations to the more general setting of a $\Z/(2m)$ grading for
some integer $m$. By a result of Sabloff \cite[Theorem 1.3]{Sab},
$\Lambda$ can have $2m$-graded augmentations and rulings only if the
rotation number $r(\Lambda)$ is $0$. In this section, we further
assume that \textit{each component} of $\Lambda$ has rotation number
$0$; this is no additional condition if $\Lambda$ is connected, but is
necessary to define the shifted Euler characteristic $\chi_*$
below. In addition, if $\Lambda$ has multiple components, we equip
each component of $\Lambda$ with a Maslov potential, in order to
define the degrees of Reeb chords between different components of
$\Lambda$ and thus the grading on the augmentation category; in fact,
this is implicitly done in the $\Z$-graded setting as well.

Let $m\geq 0$,
and fix a finite field $\F_q$ as before. As described in
\cite{NRSSZ}, we can construct the $\Z/(2m)$-graded augmentation
category of $\Lambda$, which we again write as $\Aug(\Lambda;\F_q)$,
whose objects are $2m$-graded augmentations (i.e., $\epsilon
:\thinspace \cA(\Lambda) \to \F_q$ with $\epsilon(a) = 0$ unless $|a|
\equiv 0 \pmod{2m}$), and whose morphisms are $\Z/(2m)$-graded vector
spaces over $\F_q$. We write $\cR$ for the set of $2m$-graded normal
rulings of $\Lambda$, and define the $2m$-graded ruling polynomial $R^{2m}_\Lambda(z) =
\sum_{R\in\cR} z^{-\chi(R)}$. When $m=0$, this recovers the
definitions from before. A case of particular interest is
$m=1$, where $zR^2_\Lambda(z)$ is the coefficient of $a^{-tb(\Lambda)-1}$ in the HOMFLY-PT polynomial $P_\Lambda(a,z)$, as shown in \cite{Rut}.

Since the naive number of augmentations changes under stabilization of the \dga{}, we need a normalized augmentation count that can then be related to the ruling polynomial as in Equation~\eqref{eq:hr} from the Introduction. For $m=0$, the normalization is given by $q^{-\chi_*(\Lambda) / 2} \# \{ \cA(\Lambda) \to \F_q \}$ where $\chi_*$ is the shifted Euler characteristic as defined in the Introduction; a similar expression is given in \cite{NS} in the {\em odd}-graded case. The key in the $\Z$-graded case ($m=0$) is that if we stabilize $\cA$ by adding two generators of degrees $i$ and $i-1$, then $\# \{ \cA\to\F_q\}$ is multiplied by $q$ when $i=0$ and is unchanged otherwise, while $\chi_*(\Lambda)$ increases by $2$ when $i=0$ and is unchanged otherwise.

Now let $m>0$; to obtain a similar result in the $\Z/(2m)$-graded
case, we modify the definition of $\chi_*$ as follows.
Since each component of $\Lambda$ has rotation number $0$ by assumption,
$\cA(\Lambda)$ is $\Z$-graded, and we can set $r_i$ to be the number of Reeb chords of degree $i$ as before.
Now let
\[
s_k = r_{2mk}-r_{2mk+1}+\cdots+r_{2mk+2m-2}-r_{2mk+2m-1},
\]
and define
\[
\chi_*(\Lambda) = \sum_k (2k+1)s_k.
\]
As in the $m=0$ case, this has the property that stabilizing $\cA$ by adding generators of degree $i$ and $i-1$ increases $\chi_*$ by $2$ if $i\equiv 0\pmod{2m}$ and leaves $\chi_*$ unchanged otherwise. Since $\#\{\cA(\Lambda)\to\F_q\}$, the number of $2m$-graded augmentations $\cA\to\F_q$, is multiplied by $q$ in the first case and is unchanged in the second, it follows that
$q^{-\chi_*(\Lambda)/2}\#\{\cA(\Lambda)\to\F_q\}$ is invariant under stable tame isomorphism.

A result from
 \cite{HR} together with a nontrivial extension of Lemma 5 from \cite{NS} (the extension is stated here as Lemma~\ref{lem:returnlemma}, and we limit ourselves to a sketch of the proof) can be used to show the following.

\begin{proposition}
For any $m\geq 0$, we have \label{prop:2mrulings}
\begin{equation} \label{eq:2mrulings}
q^{-\chi_*(\Lambda) / 2} (q-1)^{-\ell}  \cdot  \# \{ \cA(\Lambda) \to \F_q \} =R^{2m}_\Lambda(q^{1/2}-q^{-1/2}).
\end{equation}
\end{proposition}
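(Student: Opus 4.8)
The plan is to follow the strategy of \cite{HR}, which establishes the case $m=0$ of \eqref{eq:2mrulings}, namely \eqref{eq:hr}, and to supply the $\Z/(2m)$-graded and multi-component refinements where the $\Z$-graded, single-component argument does not directly apply. For $m=0$ the proposition is \eqref{eq:hr}, so assume $m>0$ throughout.

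First I would reduce to a convenient front diagram. Both sides of \eqref{eq:2mrulings} are Legendrian isotopy invariants: the right-hand side because the $2m$-graded ruling polynomial $R^{2m}_\Lambda$ is unchanged by the Legendrian Reidemeister moves (the argument of \cite{CP} applies for any $\rho$), and the left-hand side because $\cA(\Lambda)$ changes only by stable tame isomorphism under isotopy while, as explained in the text, the $s_k$-definition of $\chi_*$ is rigged precisely so that $q^{-\chi_*/2}\#\{\cA(\Lambda)\to\F_q\}$ is unchanged under stabilization (and a tame isomorphism preserves the augmentation count). Hence it suffices to verify \eqref{eq:2mrulings} for one front per Legendrian isotopy class, and I would take $\Lambda$ in a \emph{dipped} front as in \cite{HR, NS}, for which the Chekanov--Eliashberg \dga{} splits along the $x$-axis into elementary pieces and the augmentations over $\F_q$ are governed by linear algebra at each dip.

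Next I would set up the augmentation-to-ruling correspondence. An augmentation $\e:\cA(\Lambda)\to\F_q$ restricts to a compatible family of augmentations of the elementary pieces; applying Barannikov normal form to the filtered $\F_q$-complexes obtained over each $x$-slice, as in \cite{Bar, CP}, produces a $2m$-graded normal ruling $R(\e)\in\cR$. (Here the hypothesis that each component has rotation number $0$ makes $\cA(\Lambda)$ genuinely $\Z$-graded, hence makes the Maslov potentials, and so the notion of a $2m$-graded ruling, well defined on each component; for $\ell>1$ I would carry along the link grading of \cite{NRSSZ} throughout.) The key point is then to compute the fibers: I would show that for each $R\in\cR$ the set of augmentations $\e$ with $R(\e)=R$ has cardinality exactly $q^{\chi_*(\Lambda)/2}(q-1)^\ell\,(q^{1/2}-q^{-1/2})^{-\chi(R)}$, after which summing over $R\in\cR$ yields \eqref{eq:2mrulings} at once. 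This cardinality is computed dip by dip: each right cusp, base point, and crossing of the dipped front contributes a local factor, and the content of the \emph{return lemma} (Lemma~\ref{lem:returnlemma}), the $\Z/(2m)$-graded extension of \cite[Lemma 5]{NS}, is precisely the evaluation of these local factors --- in particular, how the switches and returns at a given dip contribute according to their Maslov gradings modulo $2m$, a return of degree $\equiv 0 \pmod{2m}$ being the source of a free $q$-parameter. Assembling the local contributions, the ruling-independent part collects into $q^{\chi_*/2}(q-1)^\ell$ --- which is what forces the $s_k$-definition of $\chi_*$ --- and the ruling-dependent part into $(q^{1/2}-q^{-1/2})^{-\chi(R)}$, using $\chi(R)=\#\{\text{right cusps}\}-\#\{\text{switches}\}$.

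The main obstacle is Lemma~\ref{lem:returnlemma} itself. In the $\Z$-graded setting a crossing can be switched only between two strands of equal Maslov potential, so the Reeb chords governing a switch automatically lie in degree $0$ and the local count is rigid; working modulo $2m$, many more crossings become eligible to be switches or returns, the degree conditions that were automatic become genuine counting conditions mod $2m$, and one must verify that the ``extra'' \dga{} generators contribute powers of $q$ in exactly the pattern recorded by the $s_k$ (and, for links, that this is compatible with the link grading and the several base points). This is the nontrivial extension referred to in the text; once it is in place the remainder is the local bookkeeping sketched above, and I would present only a sketch of the lemma itself, as the authors do.
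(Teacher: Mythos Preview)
Your overall strategy --- reduce to a convenient front, stratify augmentations by the ruling $R(\e)$, and count each fiber --- is the paper's strategy too, but you have scrambled the division of labor between the two inputs. The paper does not redo the fiber count at all: \cite[Theorem~3.4]{HR} already applies to $2m$-graded augmentations of multi-component links, and for the resolution of a plat-position front it gives directly
\[
\#\{\cA(\Lambda)\to\F_q\} \;=\; \sum_{R\in\cR}(q-1)^{-\chi(R)+\ell}\,q^{\#\mathrm{returns}(R)},
\]
so there is no need to pass to dipped diagrams or to rerun the Barannikov argument. The new ingredient, Lemma~\ref{lem:returnlemma}, is not an ``evaluation of local factors'' in the augmentation count --- it is a purely combinatorial identity about a plat front and one of its $2m$-graded rulings, namely $\#\mathrm{returns}(R)=\tfrac12(\chi_*(\Lambda)+\chi(R))$, proved by constructing an auxiliary integer-valued function $I(x)$ along the front and tracking its jumps across crossings. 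Substituting this into the displayed HR formula and using $(q-1)^{-\chi(R)}q^{\chi(R)/2}=(q^{1/2}-q^{-1/2})^{-\chi(R)}$ gives \eqref{eq:2mrulings} immediately.

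So your claimed fiber cardinality $q^{\chi_*/2}(q-1)^\ell(q^{1/2}-q^{-1/2})^{-\chi(R)}$ is correct, but it is the \emph{conclusion} of combining HR's count with the return identity, not what the return lemma asserts; and the ``nontrivial extension of \cite[Lemma~5]{NS}'' is entirely on the combinatorial side (the identity for $\#\mathrm{returns}$ when switches may occur at any crossing of degree $\equiv 0\pmod{2m}$), not a refinement of the augmentation--ruling correspondence. Your sketch would still arrive at the result, but by a longer road and with the role of Lemma~\ref{lem:returnlemma} mischaracterized.
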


\noindent
Note that for $m=0$, this is Equation~\eqref{eq:hr} from the Introduction.

\begin{proof}
Given a $2m$-graded normal ruling $R$ of $\Lambda$, those crossings with degree $0 \pmod{2m}$ are divided into three classes:  \emph{departures}, \emph{returns}, and \emph{switches}.  At a departure (resp.\ return) $c$,  the two disks whose boundaries meet at $c$ satisfy the normality condition, i.e., they are disjoint or nested, at values of  $x$  immediately preceding (resp.\ following) $c$.  Now \cite[Theorem 3.4]{HR} shows that (under the assumption that $\Lambda$ is the resolution of a plat position front diagram, which can always be made after a Legendrian isotopy) the number of $2m$-graded augmentations is
\[
\# \{ \cA(\Lambda) \to \F_q \} = \sum_{R \in \mathcal{R}}(q-1)^{-\chi(R) + \ell} \cdot q^{\# \mathrm{returns}}.
\]
The equality \eqref{eq:2mrulings} then follows in a straightforward manner from the following generalization of \cite[Lemma 5]{NS}.
\end{proof}

\begin{lemma}
Suppose $\Lambda$ is the resolution of a
front diagram in plat position.
\label{lem:returnlemma}
For any $2m$-graded ruling $R$ of $\Lambda$, we have
\begin{equation} \label{eq:returnlemma}
\# \mathrm{returns} = \frac{1}{2}\left( \chi_*(\Lambda) + \chi(R)\right).
\end{equation}
\end{lemma}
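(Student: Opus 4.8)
The $m=0$ case of \eqref{eq:returnlemma} is \cite[Lemma~5]{NS}, so the plan is to reduce the statement for $m \ge 1$ to a combinatorial identity of the same flavor and then to extend the argument there. First I would rewrite both sides in terms of the front. Every Reeb chord of $\cA(\Lambda)$ is a crossing or a right cusp of the front, all right cusps have degree $1$, and $\chi(R) = \#\{\text{right cusps}\} - \#\{\text{switches}\}$; writing the $\Z$-degree of a crossing $c$ as $d(c) = 2mk_c + j_c$ with $0 \le j_c \le 2m-1$ and substituting into $\chi_* = \sum_k (2k+1)s_k$, one finds $\chi_* = \sum_{\text{crossings } c}(2k_c+1)(-1)^{j_c} - \#\{\text{right cusps}\}$. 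Hence \eqref{eq:returnlemma} is equivalent to
\[ 2\cdot\#\{\text{returns}\} + \#\{\text{switches}\} = \sum_{\text{crossings } c}(2k_c+1)(-1)^{j_c}. \]
Separating the crossings with $j_c = 0$ --- which are exactly the switches, returns, and departures of $R$, each weighted by $2k_c+1$ --- from those with $j_c \ne 0$, this becomes an identity comparing the signed, $k_c$-weighted count of switches, returns, and departures with the alternating sum $\sum_{j_c \ne 0}(2k_c+1)(-1)^{j_c}$ over the remaining crossings. For $m=0$ all crossings in the first group have degree $0$, so every $k_c$ there vanishes and one recovers precisely the identity underlying \cite[Lemma~5]{NS}.

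To establish this identity I would follow the method of \cite{NS,HR}: realize $\#\{\text{returns}\}$ as a dimension count inside the $\e_R$-twisted complex associated to the normal ruling $R$ --- concretely, by putting the filtered complex over each $x$-coordinate into Barannikov normal form, where the returns of $R$ appear as the ``short'' persistence pairs and the switches together with the right cusps carry the surviving differential --- and then compute both sides of the displayed equation as Euler characteristics of the resulting filtered pieces. I would stress that the matching is global rather than term by term: returns can occur only at crossings of degree $\equiv 0 \pmod{2m}$, whereas the right-hand side runs over crossings of every degree, so the crossings of nonzero degree must be accounted for in aggregate; this is already the substantive point in \cite[Lemma~5]{NS}.

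The main obstacle, and the reason only a sketch is given, is the phenomenon that appears once $m \ge 1$: a crossing whose $\Z$-degree is a nonzero multiple of $2m$ is invisible to the $\Z$-graded theory but may now be a switch, a return, or a departure, entering the count with weight $2k_c+1 \ne 1$. One must check that in the Barannikov/Euler-characteristic bookkeeping this extra factor $2k_c$ --- equivalently, the fact that a persistence pair may now join generators whose $\Z$-degrees differ by more than $1$, though by $1$ modulo $2m$ --- is exactly cancelled by the corresponding shift in the alternating sum on the right. This is precisely where the hypothesis that each component of $\Lambda$ has rotation number $0$ enters (so that $\cA(\Lambda)$ is genuinely $\Z$-graded and the $k_c$ are defined), and where the argument of \cite{NS} has to be genuinely extended rather than transcribed.
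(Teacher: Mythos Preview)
Your first paragraph is correct and reproduces exactly the paper's reduction: after substituting the front data and $\chi(R)=\#\{\text{right cusps}\}-\#\{\text{switches}\}$, the lemma is equivalent to the combinatorial identity you display, which is the paper's equation~\eqref{eq:returnequiv}.

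The gap is in what comes next. The method you attribute to \cite[Lemma~5]{NS} is not what is actually done there, and your proposed replacement is too vague to constitute a proof. Lemma~5 of \cite{NS} is \emph{not} proved via Barannikov normal form or Euler characteristics of filtered pieces; it is proved by a sweep argument. One defines an integer $I(x)$ for generic $x$ (in the $\Z$-graded case, simply the number of interlaced pairs of ruling disks over $x$), checks that $I(x)=0$ for $|x|\gg 0$, and computes the jump of $I$ at each crossing. Your description of ``returns as the short persistence pairs'' conflates crossings with strand pairings and does not obviously lead anywhere; more seriously, nothing in your outline explains how an Euler-characteristic count would produce the specific weights $(2k_c+1)(-1)^{j_c}$ for crossings with $d(c)\not\equiv 0\pmod{2m}$, or the weights $k_c/m+1$, $k_c/m-1$, $k_c/m$ for departures, returns, and switches of degree $2mk_c$.

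The paper's argument is the natural extension of the \cite{NS} sweep: one defines $I(x)=I_1(x)+I_2(x)+I_3(x)$, where $I_1$ is a weighted sum over interlaced pairs of disks, $I_2$ over nested pairs, and $I_3$ over single disks, with weights built from floor functions $\lfloor\cdot/2m\rfloor$ of differences of Maslov potentials on the disk boundaries. One then verifies case by case that the jump of $I$ at a crossing $c$ equals the coefficient of $c$ in \eqref{eq:returnequiv}. The point you correctly isolate---that a degree-$2mk$ crossing can be a switch, return, or departure with weight $\ne 1$---is handled precisely by these floor-function weights, not by any homological bookkeeping. Your proposal does not contain this idea, and without it there is no mechanism to produce the required jumps.
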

\begin{proof}[Sketch of proof.]
For $k \in \Z$, let $C_k$ denote the number of crossings on the front projection of $\Lambda$ with degree $k$, and when $k \equiv 0 \pmod 2m$, write
\[
C_k = D_k + R_k + S_k
\]
where $D_k$, $R_k$, and $S_k$ respectively denote the number of departures, returns, and switches of degree $k$.  Since $\Lambda$ is the resolution of a front diagram, the Reeb chords of $\Lambda$ are in one-to-one correspondence with crossings and right cusps of the front of $\Lambda$, and so
\[
r_k = \begin{cases} C_k & k\neq 1, \\ C_k + \# \mathrm{right\ cusps} & k =1; \end{cases}
\]
along with the fact that $\chi(R) =   \#\mathrm{right\ cusps}- \#\mathrm{switches}$,  this implies that
\eqref{eq:returnlemma} is equivalent to:
\begin{equation}
0 = \sum_{k \equiv 0\!\!\!\!\!\pmod{2m}} \left(  \left(\frac{k}{m}  + 1 \right) D_k + \left(\frac{k}{m}  - 1 \right) R_k + \left(\frac{k}{m}  \right)S_k \right) +
\sum_{k \not\equiv 0\!\!\!\!\!\pmod{2m}} (-1)^k\left(  2\left\lfloor \frac{k}{2m} \right\rfloor + 1 \right) C_k.
\label{eq:returnequiv}
\end{equation}
To establish \eqref{eq:returnequiv}, we associate an integer $I(x)$ to values of $x\in\mathbb{R}$ that are not the $x$ coordinates of crossings or cusps of the front projection of $\Lambda$.  The function $I$ will have the properties:
\begin{itemize}
\item[(i)] As $x$ moves from left to right in $\mathbb{R}$, $I(x)$ is constant except when $x$ passes a crossing of $\Lambda$.
\item[(ii)] $I(x)=0$ for $x\ll 0$ and $x\gg 0$.
\item[(iii)]  Suppose that a single crossing $c$ occurs in the interval $(x_0,x_1)$. Then:
\[
I(x_1) -I(x_0) = \begin{cases}  \frac{|c|}{m}+1 & \mbox{if $c$ is a departure,} \\[0.5ex]
\frac{|c|}{m}-1 & \mbox{if $c$ is a return,} \\[0.5ex]
\frac{|c|}{m} & \mbox{if $c$ is a switch,} \\[0.5ex]
(-1)^{|c|}\left( 2 \left\lfloor \frac{|c|}{2m}\right\rfloor+1\right) & \mbox{if $|c| \not\equiv 0\pmod{2m}$.} \end{cases}
\]
\end{itemize}
The equation \eqref{eq:returnequiv} then follows from considering the total change in $I$
from $x\ll 0$ to $x\gg 0$.

To define $I$ we introduce some notation.  Let $D_1, \ldots, D_n$ denote the disks that constitute the ruling $R$.  Each $D_i$ has an upper strand and a lower strand;  we denote the value of the ($\Z$-valued) Maslov potential on the upper (resp. lower) strand of $D_i$ by $a_i$ (resp. $b_i$).  Note that these values
may change along the upper and lower strands of $D_i$ but only by multiples of $2m$.

We now define
\[
I(x) = I_1(x) + I_2(x) +I_3(x),
\]
where $I_1,I_2,I_3$ are given as follows.
The first term is defined by
\[
I_1(x) = \sum_{\labellist
\small
\pinlabel $a_i$ [r] at -2 96
\pinlabel $b_i$ [r] at -2 32
\pinlabel $a_j$ [l] at 128 64
\pinlabel $b_j$ [l] at 128 0
\endlabellist
\includegraphics[scale=.2]{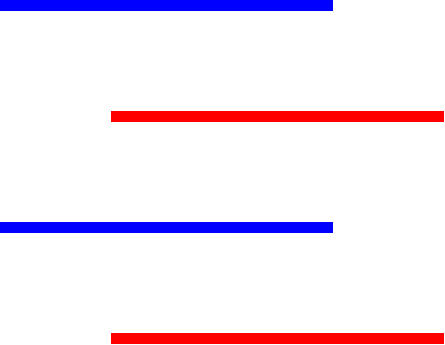} } \quad (-1)^{b_i-a_j} \left( 2 \left\lfloor \frac{b_i-a_j}{2m} \right\rfloor + 1 \right)
\]
where the sum is over all pairs of interlaced disks at $x$, $D_i$ and $D_j$, with $D_i$ the upper of the two disks.  Next,
\[
I_2(x) = \sum_{\labellist
\small
\pinlabel $a_j$ [r] at -2 96
\pinlabel $a_i$ [l] at 128 64
\pinlabel $b_i$ [l] at 128 32
\pinlabel $b_j$ [r] at -2 0
\endlabellist
\includegraphics[scale=.2]{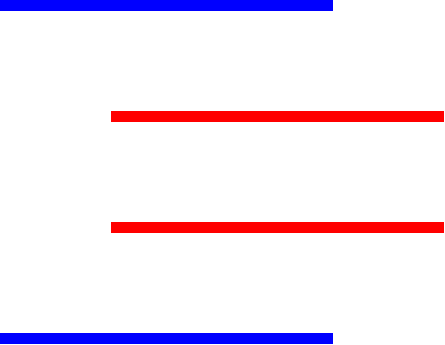} } \quad 2 (-1)^{a_i-a_j}  \left\lfloor \frac{a_i-b_i}{2m} \right\rfloor
\]
where the sum is over all pairs of nested disks, $E_i$ and $E_j$, with $E_i$ inside $E_j$ at $x$.  Finally,
\[
I_3(x) =  \sum_{\labellist
\small
\pinlabel $a_i$ [r] at -2 32
\pinlabel $b_i$ [r] at -2 0
\endlabellist
\includegraphics[scale=.2]{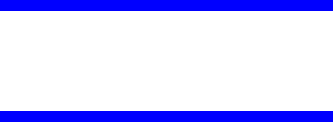} } \quad  \left\lfloor \frac{a_i-b_i}{2m} \right\rfloor
\]
where the sum is over all disks at $x$.

That the properties (i) and (ii) hold is easily verified.  That $I(x)$ satisfies (iii) can be verified by a lengthy but fairly straightforward check based on considering the combinatorics of the ruling disks that meet at a crossing $c$ in a case-by-case manner.  When $c$ is a switch, one also needs to consider the possible ways in which a third disk $D_k$ may intersect $D_i$ and $D_j$.
\end{proof}

Proposition~\ref{prop:2mrulings} relates the naive count of augmentations $\#\{\cA(\Lambda)\to\F_q\}$ to the $2m$-graded ruling polynomial.
On the other hand, the same argument as in Section~\ref{sec:counting}, but now in the $2m$-graded setting, yields the following:
\[
\#\{\cA(\Lambda)\to\F_q\} = \sum_{[\epsilon]} \frac{(q-1)^\ell}{\lvert\Aut(\epsilon)\rvert} q^{\dim\Hom^{0}(\epsilon,\epsilon)-\dim B^{0}(\epsilon,\epsilon)-\ell}.
\]
Here we note that all gradings are taken in $\Z/(2m)$, and that the sum is over isomorphism classes of augmentations in the $2m$-graded category.
Note in particular that the exponent
$\dim\Hom^{0}(\epsilon,\epsilon)-\dim B^{0}(\epsilon,\epsilon)-\ell$
in the summand is independent of the choice of representative $\epsilon$ of the isomorphism class $[\epsilon]$, cf.\ Corollary~\ref{cor:count2}.

Combined with Proposition~\ref{prop:2mrulings}, this yields the following.

\begin{proposition}
For any $m\geq 0$, we have \label{prop:2mcard}
\[
\sum_{[\epsilon]} \frac{1}{\lvert\Aut(\epsilon)\rvert} q^{\dim\Hom^{0}(\epsilon,\epsilon)-\dim B^{0}(\epsilon,\epsilon)-\ell
-(\chi_*(\Lambda)-tb(\Lambda))/2} = q^{tb(\Lambda)/2} R_\Lambda(q^{1/2}-q^{-1/2}).
\]
\end{proposition}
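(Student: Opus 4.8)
The plan is to derive the identity by combining the two displays immediately preceding the statement. Proposition~\ref{prop:2mrulings} gives
\[
q^{-\chi_*(\Lambda)/2}(q-1)^{-\ell}\cdot\#\{\cA(\Lambda)\to\F_q\}=R^{2m}_\Lambda(q^{1/2}-q^{-1/2}),
\]
while running the argument of Section~\ref{sec:counting} in the $\Z/(2m)$-graded setting yields
\[
\#\{\cA(\Lambda)\to\F_q\}=\sum_{[\epsilon]}\frac{(q-1)^\ell}{\lvert\Aut(\epsilon)\rvert}\,q^{\dim\Hom^0(\epsilon,\epsilon)-\dim B^0(\epsilon,\epsilon)-\ell}.
\]
Substituting the second into the first, the factors $(q-1)^{\pm\ell}$ cancel and one obtains
\[
\sum_{[\epsilon]}\frac{1}{\lvert\Aut(\epsilon)\rvert}\,q^{\dim\Hom^0(\epsilon,\epsilon)-\dim B^0(\epsilon,\epsilon)-\ell-\chi_*(\Lambda)/2}=R^{2m}_\Lambda(q^{1/2}-q^{-1/2}).
\]
Multiplying both sides by $q^{tb(\Lambda)/2}$ and absorbing it into the summand via $-\chi_*/2+tb/2=-(\chi_*-tb)/2$ produces exactly the claimed equality, with $R_\Lambda$ there denoting the $2m$-graded ruling polynomial $R^{2m}_\Lambda$, so that the $m=0$ case is literally the statement as written.

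First I would spell out that every step of Section~\ref{sec:counting} goes through with $\Z$ replaced by $\Z/(2m)$: the height filtration on $\cA(\Lambda)$, Lemmas~\ref{lem:construct-K} and~\ref{lem:rescale-epsilon}, Propositions~\ref{prop:unique-isomorphism}, \ref{prop:mktriangular} and~\ref{prop:iso-bijection}, and hence Corollaries~\ref{cor:count1} and~\ref{cor:count2}, all depend only on the $A_\infty$ structure and on the height filtration, neither of which sees the grading; the one visible change is that $\Hom^0(\e,\e')$ now picks up generators $a_i^+$ from Reeb chords of \dga{} degree $\equiv-1\pmod{2m}$, but this is already encoded in the term $\dim\Hom^0(\e,\e)-\ell$, so no extra bookkeeping is needed, and the exponent $\dim\Hom^0(\e,\e)-\dim B^0(\e,\e)-\ell$ remains a genuine invariant of the isomorphism class, as noted above. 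The one point demanding real attention is that $\chi_*(\Lambda)$ throughout must be the \emph{modified} shifted Euler characteristic $\sum_k(2k+1)s_k$ introduced earlier in this section rather than the $m=0$ formula, so that the two displays being combined refer to the same quantity; I would flag this explicitly when invoking Proposition~\ref{prop:2mrulings}.

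In fact there is essentially no obstacle internal to this proposition: it is a formal consequence of Proposition~\ref{prop:2mrulings} and the $2m$-graded counting formula, and the remaining work is upstream, in the proof of Proposition~\ref{prop:2mrulings}, which relies on the $2m$-graded extension of \cite[Lemma~5]{NS} stated as Lemma~\ref{lem:returnlemma} and only sketched there. Once that is taken as given, the present statement follows by the algebra above. As a sanity check, setting $m=0$ recovers Corollary~\ref{cor:ruling-polynomial}, which confirms that the signs and exponents in the manipulation are correct.
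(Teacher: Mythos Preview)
Your proposal is correct and follows exactly the paper's approach: the paper likewise obtains Proposition~\ref{prop:2mcard} by substituting the $2m$-graded counting formula $\#\{\cA(\Lambda)\to\F_q\} = \sum_{[\epsilon]} \frac{(q-1)^\ell}{\lvert\Aut(\epsilon)\rvert} q^{\dim\Hom^{0}-\dim B^{0}-\ell}$ into Proposition~\ref{prop:2mrulings} and rearranging. Your additional remarks on why the Section~\ref{sec:counting} arguments carry over verbatim to the $\Z/(2m)$-graded setting, and your caution about using the modified $\chi_*$, are accurate and more detailed than what the paper itself provides.
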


\noindent
By comparison with Corollary~\ref{cor:ruling-polynomial}, it then
makes sense to regard the left hand side of the equation in
Proposition~\ref{prop:2mcard} as being the homotopy cardinality of the
$2m$-graded augmentation category for $m\geq 0$. Indeed, for $m=0$,
the proof of Theorem~\ref{thm:card} shows that
\[
q^{\dim\Hom^{0}(\epsilon,\epsilon)-\dim B^{0}(\epsilon,\epsilon)-\ell
-(\chi_*(\Lambda)-tb(\Lambda))/2} =
\frac{| H^{-1} \Hom(\epsilon, \epsilon) | \cdot | H^{-3} \Hom(\epsilon, \epsilon) |
\cdots}{|H^{-2} \Hom(\epsilon,
\epsilon)| \cdot |H^{-4} \Hom(\epsilon,\epsilon)| \cdots}
\]
and so the left hand side is indeed
$\pi_{\ge 0} \Aug(\Lambda; \F_q)^*$.

For $m>0$, there are two deficiencies with viewing the left hand side
in Proposition~\ref{prop:2mcard} as the homotopy cardinality: first,
there is no obvious relation to the notion of homotopy cardinality
from homotopy theory; and second, the expression involves ranks of chain
and boundary groups, rather than cohomology groups, which are more
natural from the viewpoint of proving invariance directly. We do not
address the first problem here, but propose a conjectural way to
resolve the second.
There is a fair amount of experimental evidence for the following.

\begin{conjecture}
Let $\epsilon :\thinspace \cA(\Lambda) \to \F_q$ be a $2m$-graded
augmentation of a Legendrian link $\Lambda$ with $\ell$ components, for $m\geq 0$.
\label{conj:2mgraded}
Then
\[
2\dim\Hom^{0}(\epsilon,\epsilon)-2\dim B^{0}(\epsilon,\epsilon)-\ell
-\chi_*(\Lambda) =
2\dim H^0\Hom(\epsilon,\epsilon)-\dim H^1\Hom(\epsilon,\epsilon).
\]
\end{conjecture}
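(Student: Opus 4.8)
The plan is to reduce Conjecture~\ref{conj:2mgraded} to a single identity of ``shifted Euler characteristics'' for the complex $\Hom(\epsilon,\epsilon)$, and then to prove that identity by running the spectral sequence of the height filtration of Proposition~\ref{prop:mktriangular} and feeding in Sabloff duality for the augmentation category.

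\emph{Reduction.} For any $\Z/(2m)$-graded complex one has $\dim\Hom^0(\epsilon,\epsilon)-\dim B^0(\epsilon,\epsilon)=\dim H^0\Hom(\epsilon,\epsilon)+\dim B^1(\epsilon,\epsilon)$, so the conjectured equality is equivalent to $2\dim B^1(\epsilon,\epsilon)+\dim H^1\Hom(\epsilon,\epsilon)=\ell+\chi_*(\Lambda)$. Now write $d(g)\in\Z$ for the integer degree of a generator $g$ (well defined since every component of $\Lambda$ has rotation number $0$), set $w(e)=(2\lfloor e/2m\rfloor+1)(-1)^e$, and let $\chi_*^{\Hom}(\epsilon)=\sum_g w(d(g))$ be the corresponding shifted Euler characteristic of $\Hom(\epsilon,\epsilon)$, the sum running over the generators $y_i^+,x_i^+,a_j^+$. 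Since $a_j^+$ has degree $|a_j|+1$, the $\ell$ generators $y_i^+$ and $x_i^+$ lie in degrees $0,1$ with weights $w(0)=1$, $w(1)=-1$, and $\dim\Hom^0(\epsilon,\epsilon)$ counts precisely the Reeb chords of degree $\equiv -1\pmod{2m}$ together with the $y_i^+$, a short computation gives $\chi_*^{\Hom}(\epsilon)=2\dim\Hom^0(\epsilon,\epsilon)-2\ell-\chi_*(\Lambda)$. Hence the conjecture is equivalent to
\begin{equation}\label{eq:plan-reduced}
\chi_*^{\Hom}(\epsilon)=2\dim H^0\Hom(\epsilon,\epsilon)-\dim H^1\Hom(\epsilon,\epsilon)+2\dim B^0(\epsilon,\epsilon)-\ell .
\end{equation}
For $m=0$ this is exactly the identity extracted in the proof of Theorem~\ref{thm:card}, where it reduces to $\dim H^0\Hom(\epsilon,\epsilon)=\dim H^2\Hom(\epsilon,\epsilon)+\ell$ after using Sabloff duality ($\dim H^k\Hom(\epsilon,\epsilon)=\dim H^{2-k}\Hom(\epsilon,\epsilon)$ for $k\notin\{0,1,2\}$) to telescope the remaining terms; this indicates the mechanism to generalize.

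\emph{Main step.} By Proposition~\ref{prop:mktriangular}, $m_1$ strictly raises the height-filtration level, so the associated graded differential vanishes and the spectral sequence of the height filtration identifies $H^*\Hom(\epsilon,\epsilon)$ with the span of a subset $\mathcal S$ of the generators, the remaining generators being matched into cancelling pairs $(g,g')$ with $d(g')=d(g)+1+2m\nu$ for an integer $\nu=\nu(g,g')$. Using that $w(d(g))+w(d(g'))$ equals $2\nu(-1)^{d(g)+1}$, with an extra $+2$ exactly when $d(g)\equiv -1\pmod{2m}$, and that those pairs are counted by $\dim B^0(\epsilon,\epsilon)$, we get
\[
\chi_*^{\Hom}(\epsilon)=\sum_{g\in\mathcal S}w(d(g))+2\dim B^0(\epsilon,\epsilon)+2\sum_{\text{pairs}}\nu(-1)^{d(g)+1}.
\]
Comparing with \eqref{eq:plan-reduced}, it remains to prove
\[
\sum_{g\in\mathcal S}w(d(g))+2\sum_{\text{pairs}}\nu(-1)^{d(g)+1}=2\dim H^0\Hom(\epsilon,\epsilon)-\dim H^1\Hom(\epsilon,\epsilon)-\ell .
\]
For this I would use Sabloff duality $H^k\Hom_-(\epsilon,\epsilon)\cong\bigl(H^{2-k}\Hom_+(\epsilon,\epsilon)\bigr)^*$ together with the exact sequence $0\to\Hom_-(\epsilon,\epsilon)\to\Hom_+(\epsilon,\epsilon)\to C^*(\Lambda)\to0$ of \cite[Proposition~5.2]{NRSSZ}, in which $C^*(\Lambda)$ has cohomology $\F_q^\ell$ in each of degrees $0$ and $1$: away from degrees $0,1$ the duality should pair the classes in $\mathcal S$ so that their weighted contribution cancels (the $m>0$ counterpart of the $m=0$ telescoping), the $\nu$-sum recording exactly how the $2m$-periodic shift moves classes between degree blocks; what survives is the degree-$0$ and degree-$1$ part, which the exact sequence evaluates as $2\dim H^0-\dim H^1-\ell$, the $-\ell$ being the surjectivity of $H^0\Hom_+(\epsilon,\epsilon)\to H^0(\Lambda;\F_q)$.

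\emph{Main obstacle.} The difficulty is concentrated in this last equality. Since $\epsilon$ may be nonzero on Reeb chords of nonzero integer degree, $m_1$ does not respect the $\Z$-grading on generators, so neither $\sum_{g\in\mathcal S}w(d(g))$ nor the integers $\nu$ is manifestly intrinsic --- the content of the conjecture is that their combination is --- and making the duality cancellation precise in this periodic, non-$\Z$-graded setting, showing that the unmatched classes live only in degrees $0$ and $1$ and that the $\nu$-sum contributes nothing beyond them, is the crux. A secondary point needing care is the precise form of Sabloff duality in the $\Z/(2m)$-graded case and, for $\ell>1$, the surjectivity of $H^0\Hom_+(\epsilon,\epsilon)\to H^0(\Lambda;\F_q)$: for a knot this is just that the unit is nonzero in $H^0$, but for links it is a statement about the link grading $r\times c$ that seems to require a separate argument. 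As partial evidence, one can verify that both sides of the conjectured identity are unchanged under Legendrian isotopy and under stabilizing $\cA(\Lambda)$ by a pair of generators in degrees $i,i-1$, the latter because such a stabilization adjoins an acyclic two-step complex to $\Hom(\epsilon,\epsilon)$ and alters $\dim\Hom^0(\epsilon,\epsilon)$, $\dim B^0(\epsilon,\epsilon)$, and $\chi_*(\Lambda)$ by compensating amounts.
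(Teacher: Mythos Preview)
The statement you are trying to prove is labeled a \emph{conjecture} in the paper, and the paper does not prove it in general. What the paper does prove is Proposition~\ref{prop:0graded}: the special case in which the augmentation $\epsilon$ happens to be $\Z$-graded (so that $\Hom(\epsilon,\epsilon)$ carries a genuine $\Z$-grading refining the $\Z/(2m)$-grading). In that case the paper's argument is essentially your reduction together with the Sabloff-duality telescoping you describe for $m=0$: one first rewrites $2\dim\Hom^0-2\ell-\chi_*$ as the weighted sum $\sum_k(2k+1)\sum_{i=0}^{2m-1}(-1)^i\dim\Hom^{2mk+i}$, then passes to cohomology picking up the $2\sum_k\dim B^{2mk}$ term, and finally uses $\dim H^i=\dim H^{2-i}$ (with the $\ell$-correction at $i=0,2$) to collapse the weighted cohomology sum to $\sum_k(2\dim H^{2mk}-\dim H^{2mk+1})-\ell$. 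So your reduction and your $m=0$ remark match the paper's special-case proof exactly.

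Your ``Main step'' attempts to go beyond this and treat a genuinely $\Z/(2m)$-graded $\epsilon$. Here there is a real gap, and you correctly flag it yourself. The height-filtration spectral sequence does let you write the cohomology as a subquotient built from generators, but the integers $\nu(g,g')$ recording how far the cancelling differential jumps in $\Z$-degree are not invariants of the complex---they depend on the choice of cancelling pairs---so the identity you need, $\sum_{g\in\mathcal S}w(d(g))+2\sum\nu(-1)^{d(g)+1}=2\dim H^0-\dim H^1-\ell$, is not something Sabloff duality in its $\Z/(2m)$-graded form can see. Indeed, in the periodic setting Sabloff duality only relates $H^{[k]}$ to $H^{[2-k]}$ modulo $2m$; for small $m$ this gives far fewer relations than in the $\Z$-graded case (for $m=1$ it is nearly vacuous), so the ``telescoping'' mechanism you invoke has no analog. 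This is precisely why the paper leaves the general statement as a conjecture and proves only the $\Z$-graded case. Your proposal is therefore a correct proof of what the paper proves, together with an honest outline of why the full conjecture remains open.
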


Conjecture~\ref{conj:2mgraded}, combined with
Proposition~\ref{prop:2mcard}, would have the following consequence.

\begin{corollary}
For any $m\geq 0$,\label{cor:2mgraded}
\[
\sum_{[\epsilon]} \frac{1}{\lvert\Aut(\epsilon)\rvert} \frac{|H^0\Hom(\epsilon,\epsilon)|}{|H^1\Hom(\epsilon,\epsilon)|^{1/2}} q^{(tb(\Lambda)-\ell)/2} = q^{tb(\Lambda)/2} \sum_{R \in \cR} (q^{1/2} - q^{-1/2})^{-\chi(R)}.
\]
\end{corollary}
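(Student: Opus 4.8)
The plan is to deduce Corollary~\ref{cor:2mgraded} formally from Proposition~\ref{prop:2mcard} and Conjecture~\ref{conj:2mgraded}. Once the conjecture is granted there is no further geometric input: the whole argument is a uniform (in $m \geq 0$) manipulation of the $q$-exponent appearing in the summand of Proposition~\ref{prop:2mcard}.

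First I would rewrite that exponent. Dividing the identity of Conjecture~\ref{conj:2mgraded} by $2$ gives
\[
\dim\Hom^0(\epsilon,\epsilon) - \dim B^0(\epsilon,\epsilon) - \frac{\ell}{2} - \frac{\chi_*(\Lambda)}{2}
= \dim H^0\Hom(\epsilon,\epsilon) - \frac{1}{2}\dim H^1\Hom(\epsilon,\epsilon).
\]
Adding $(tb(\Lambda) - \ell)/2$ to both sides shows that the exponent $\dim\Hom^0(\epsilon,\epsilon) - \dim B^0(\epsilon,\epsilon) - \ell - (\chi_*(\Lambda) - tb(\Lambda))/2$ occurring in Proposition~\ref{prop:2mcard} equals $\dim H^0\Hom(\epsilon,\epsilon) - \dim H^1\Hom(\epsilon,\epsilon)/2 + (tb(\Lambda)-\ell)/2$. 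Since $|H^i\Hom(\epsilon,\epsilon)| = q^{\dim H^i\Hom(\epsilon,\epsilon)}$ over $\F_q$, exponentiating yields
\[
q^{\dim\Hom^0(\epsilon,\epsilon) - \dim B^0(\epsilon,\epsilon) - \ell - (\chi_*(\Lambda) - tb(\Lambda))/2}
= \frac{|H^0\Hom(\epsilon,\epsilon)|}{|H^1\Hom(\epsilon,\epsilon)|^{1/2}}\, q^{(tb(\Lambda) - \ell)/2}.
\]
I would then substitute this term by term into the left-hand side of Proposition~\ref{prop:2mcard}, which becomes exactly the left-hand side of Corollary~\ref{cor:2mgraded}; on the right-hand side, unwinding the definition $R^{2m}_\Lambda(z) = \sum_{R\in\cR} z^{-\chi(R)}$ turns $q^{tb(\Lambda)/2} R^{2m}_\Lambda(q^{1/2}-q^{-1/2})$ into $q^{tb(\Lambda)/2}\sum_{R\in\cR}(q^{1/2}-q^{-1/2})^{-\chi(R)}$, the right-hand side of the corollary. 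This completes the deduction.

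The only genuine obstacle is Conjecture~\ref{conj:2mgraded}, which this argument takes as input. For $m=0$ it is in fact a theorem: as explained in the discussion preceding the conjecture, the identity there is precisely the rearrangement of $\dim\Hom^0(\epsilon,\epsilon) - \dim B^0(\epsilon,\epsilon)$ carried out in the proof of Theorem~\ref{thm:card}, so for $m=0$ the corollary merely recovers (a consequence of) results already established. For $m>0$ one would instead need an independent proof, for instance by checking that both sides of the conjectured equality transform the same way under stabilization of the $\Z/(2m)$-graded \dga{} and under Legendrian Reidemeister moves, together with a direct verification on a base case; establishing that invariance, where the $m=0$ bookkeeping of Section~\ref{sec:counting} does not apply verbatim, is where the real work lies. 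The corollary itself is purely formal once the conjecture is available.
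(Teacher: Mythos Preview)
Your deduction is correct and matches the paper's approach: the paper simply states that Corollary~\ref{cor:2mgraded} is the consequence of combining Conjecture~\ref{conj:2mgraded} with Proposition~\ref{prop:2mcard}, and you have spelled out exactly that algebraic substitution. One minor inaccuracy in your commentary: the $m=0$ case of Conjecture~\ref{conj:2mgraded} is not literally the rearrangement in the proof of Theorem~\ref{thm:card} (which produces the alternating product of negative-degree cohomology groups) but rather requires Sabloff duality to pass to the $H^0$/$H^1$ form, as the paper notes in the Remark following the corollary and proves in Proposition~\ref{prop:0graded}.
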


\noindent
It would then be reasonable to define the left hand side of
Corollary~\ref{cor:2mgraded} as the homotopy cardinality of the
$2m$-graded augmentation category. 

\begin{remark}
In the case $m=0$, the left hand
side agrees with the definition of $\pi_{\ge 0} \Aug(\Lambda;
\F_q)^*$ from the Introduction, as can be seen directly by Sabloff
duality (see the proof of Proposition~\ref{prop:0graded} below).
For $m=1$, since $tb(\Lambda) = \dim H^1\Hom(\e,\e) - \dim H^0\Hom(\e,\e)$, the left hand side of Corollary~\ref{cor:2mgraded} is
$\sum_{[\epsilon]} \frac{1}{\lvert\Aut(\epsilon)\rvert} |H^0\Hom(\e,\e)|^{1/2} q^{-\ell/2}$.
\end{remark}

We conclude this section by establishing
Conjecture~\ref{conj:2mgraded} in a case that generalizes $m=0$.
Suppose that $m$ is arbitrary but the augmentation $\epsilon$ is not just
$\Z/(2m)$-graded but in fact $\Z$-graded, i.e., $\epsilon(a) = 0$
unless $|a|=0$. Then the space $\Hom(\epsilon,\epsilon)$ has a $\Z$
grading that descends to a $\Z/(2m)$ grading.

\begin{proposition}
Suppose that $\epsilon$ is $\Z$-graded and thus $\Z/(2m)$-graded for
any $m\geq 0$. Then
Conjecture~\ref{conj:2mgraded} holds for any $m\geq 0$.
\label{prop:0graded}
\end{proposition}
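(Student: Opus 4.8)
The plan is to use the $\Z$-grading to convert the $\Z/(2m)$-graded identity of Conjecture~\ref{conj:2mgraded} into a purely $\Z$-graded one, and then to deduce that identity from Sabloff duality; the case $m=0$ of the argument is essentially the one already carried out in the proof of Theorem~\ref{thm:card}. The first point is that the $A_\infty$-operations on $\Aug(\Lambda;\F_q)$ are defined by counts of holomorphic disks and do not refer to the grading, so when $\epsilon$ is $\Z$-graded the $\Z/(2m)$-graded complex $(\Hom(\epsilon,\epsilon),m_1)$ is just the $\Z$-graded complex with its grading reduced modulo $2m$. Writing $\dim\Hom^j$, $\dim B^j$, $\dim H^j$ for the $\Z$-graded quantities, one gets $\dim\Hom^{0}_{2m}(\epsilon,\epsilon)=\sum_{j\equiv 0}\dim\Hom^j$, $\dim B^{0}_{2m}(\epsilon,\epsilon)=\sum_{j\equiv 0}\dim B^j$, and $\dim H^{k}_{2m}\Hom(\epsilon,\epsilon)=\sum_{j\equiv k}\dim H^j$, the sums being over the residue class indicated mod $2m$. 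Moreover $\cA(\Lambda)$ is $\Z$-graded, so $\chi_*(\Lambda)=\sum_i w_i r_i$ with $w_i=(2\lfloor i/(2m)\rfloor+1)(-1)^i$, where $r_i=\dim\Hom^{i+1}$ except that $r_{-1}=\dim\Hom^0-\ell$ and $r_0=\dim\Hom^1-\ell$.

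Next I would carry out the bookkeeping. Substituting $\dim\Hom^j=\dim B^j+\dim H^j+\dim B^{j+1}$ into $\chi_*$ and into the left-hand side of Conjecture~\ref{conj:2mgraded}, the terms involving the $\dim B^j$ telescope: $w_{j-1}+w_{j-2}$ equals $2$ precisely when $j\equiv 1\pmod{2m}$ and $0$ otherwise, so both in $\chi_*$ and in $\dim\Hom^{0}_{2m}-\dim B^{0}_{2m}$ the coboundary contributions collapse to $2\sum_{j\equiv 1}\dim B^j$, and these cancel. What survives is a $\Z$-graded identity $\sum_j f_j\,\dim H^j\Hom(\epsilon,\epsilon)=\ell$, where $f_j=w_{j-1}-1$ if $j\equiv 1\pmod{2m}$ and $f_j=w_{j-1}$ otherwise; in particular $f_0=1$, $f_1=0$, and a short manipulation of floor functions gives $f_j+f_{2-j}=0$ for every $j$ and every $m\geq 0$.

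Finally I would invoke Sabloff duality (\cite[\S5]{NRSSZ}), which, being compatible with the $\Z$-grading here, gives $\dim H^j\Hom(\epsilon,\epsilon)=\dim H^{2-j}\Hom(\epsilon,\epsilon)$ in all but finitely many degrees, together with an exact sequence in the remaining degrees $0,1,2$ whose other terms are $H^{0}(\Lambda;\F_q)$ and $H^{1}(\Lambda;\F_q)$, each of rank $\ell$. Combining $f_j+f_{2-j}=0$ with the duality equalities, all terms $f_j\dim H^j$ with $j\notin\{0,1,2\}$ cancel in pairs, and since $f_1=0$ the identity $\sum_j f_j\dim H^j=\ell$ collapses to $\dim H^{0}\Hom(\epsilon,\epsilon)-\dim H^{2}\Hom(\epsilon,\epsilon)=\ell$. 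This last equality is read off the degree-zero end $0\to \bigl(H^{2}\Hom(\epsilon,\epsilon)\bigr)^{\vee}\to H^{0}\Hom(\epsilon,\epsilon)\to H^{0}(\Lambda;\F_q)$ of the Sabloff exact sequence, which shows that $\dim H^{0}\Hom(\epsilon,\epsilon)-\dim H^{2}\Hom(\epsilon,\epsilon)$ equals the rank of the image of $H^{0}\Hom(\epsilon,\epsilon)\to H^{0}(\Lambda;\F_q)$, and this map is onto because the unit $e_\epsilon$ is sent to the fundamental cocycle of $\Lambda$.

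The step I expect to be hardest is the last one. Verifying the telescoping and the identity $f_j+f_{2-j}=0$ is a delicate but routine computation with floor functions and the mod-$2m$ residues; but the surjectivity of $H^{0}\Hom(\epsilon,\epsilon)\to H^{0}(\Lambda;\F_q)$ is immediate only when $\Lambda$ is connected, since then $H^{0}(\Lambda;\F_q)$ is generated by a single fundamental class hit by $e_\epsilon$. For a link one must instead exhibit $\ell$ degree-zero cocycles in $\Hom(\epsilon,\epsilon)$ whose classes span $H^{0}(\Lambda;\F_q)$ --- for instance suitable component ``partial units'' --- which should follow from the multi-component structure of $\Aug(\Lambda;\F_q)$ developed in \cite{NRSSZ}, but needs to be checked carefully.
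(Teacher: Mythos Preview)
Your approach is essentially the same as the paper's: both lift to the $\Z$-grading, telescope the coboundary contributions via $\dim\Hom^j=\dim B^j+\dim H^j+\dim B^{j+1}$, and reduce to an identity among the $\dim H^j$ that is then deduced from Sabloff duality. Your packaging through coefficients $f_j$ satisfying $f_j+f_{2-j}=0$ is a slightly cleaner bookkeeping device than the paper's direct manipulation of double sums, but the content is identical.

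The one substantive difference is the endgame. The paper simply invokes Sabloff duality in the packaged form $\dim H^i=\dim H^{2-i}$ for $i\neq 0,2$ together with $\dim H^0=\dim H^2+\ell$, citing \cite{EESab} and \cite[\S5.1.2]{NRSSZ}, and is done. You instead try to extract $\dim H^0-\dim H^2=\ell$ from the exact-sequence form of duality, and flag surjectivity of $H^0\Hom(\e,\e)\to H^0(\Lambda;\F_q)$ for links as the hard step. This is an unnecessary detour: the identity $\dim H^0=\dim H^2+\ell$ is precisely what the cited references already provide, so you can cite it directly. Your proposed route through ``partial units'' $-y_i^+$ would in fact not work as written: the formula for $m_1(y_i^+)$ in the proof of Proposition~\ref{prop:unique-isomorphism}, specialized to $\e_1=\e_2=\e$, gives $m_1(y_i^+)=\sum_{r(a_j)=i}\e(a_j)a_j^+-\sum_{c(a_j)=i}\e(a_j)a_j^+$, which need not vanish when there are mixed Reeb chords with $\e(a_j)\neq 0$. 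So the individual $y_i^+$ are not cocycles in general, and a different argument would be required if you insisted on that path.
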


\noindent
To clarify, the gradings in
Conjecture~\ref{conj:2mgraded} are taken mod $2m$. Thus when
$\epsilon$ is $\Z$-graded,
the conclusion of Conjecture~\ref{conj:2mgraded} can be written:
\[
2\sum_k (\dim\Hom^{2mk}(\epsilon,\epsilon)-\dim B^{2mk}(\epsilon,\epsilon))-\ell
-\chi_*(\Lambda) =
\sum_k (2\dim H^{2mk}\Hom(\epsilon,\epsilon)-\dim H^{2mk+1}\Hom(\epsilon,\epsilon)),
\]
where all gradings are now taken over $\Z$.

\begin{proof}[Proof of Proposition~\ref{prop:0graded}]
For convenience, we write
$\Hom^i(\epsilon,\epsilon)$ as $\Hom^i$, $H^i\Hom(\epsilon,\epsilon)$
as $H^i$, and $B^i(\epsilon,\epsilon)$ as
$B^i$.
Note that $\Hom^i$ is generated by the Reeb chords of $\Lambda$ of degree
$i-1$, along with $\ell$ generators in degree $0$ and $\ell$
generators in degree $1$. Thus
\begin{align*}
2\sum_k\dim\Hom^{2mk}-2\ell-\chi_*(\Lambda) &=
\sum_k (2k+1)(\dim\Hom^{2mk+2m}-\dim\Hom^{2mk}) \\
&\qquad - \sum_k
(2k+1)\sum_{i=0}^{2m-1}(-1)^i\dim\Hom^{2mk+i+1} \\
&= \sum_k (2k+1) \sum_{i=0}^{2m-1} (-1)^i \dim\Hom^{2mk+i} \\
&= \sum_k (2k+1)\sum_{i=0}^{2m-1} (-1)^i\dim H^{2mk+i} + 2\sum_k
\dim B^{2mk},
\end{align*}
where the final equality comes from taking the alternating sum of
$\dim\Hom^i = \dim B^i+\dim H^i+\dim B^{i+1}$ from $i=0$ to $i=2m-1$.

Thus it suffices to show that
\[
\sum_k (2k+1)\sum_{i=0}^{2m-1} (-1)^i\dim H^{2mk+i} =
\sum_k (2\dim H^{2mk}-\dim H^{2mk+1})-\ell.
\]
But this follows, after a bit of algebraic manipulation, from Sabloff
duality (\cite{EESab}, cf.\
\cite[section 5.1.2]{NRSSZ}), which implies that $\dim H^i = \dim H^{2-i}$ unless
$i=0$ or $i=2$, and $\dim H^0 = \dim H^2 + \ell$.
\end{proof}

\bibliographystyle{alpha}
\bibliography{counting}

\newcommand{\longcomment}[2]{#2}
\longcomment{

} 

\end{document}